\newtheorem{thm}{Theorem}[section]
\newtheorem{prop}[thm]{Proposition}
\newtheorem{lemma}[thm]{Lemma}
\newtheorem{cor}[thm]{Corollary}
\newtheorem{defn}[thm]{Definition}
\newtheorem{conj}[thm]{Conjecture}
\newtheorem*{thm*}{Theorem}
\newcommand{\CC}{\mathbb{C}}
\newcommand{\RR}{\mathbb{R}}
\newcommand{\curves}{\mathcal{L}}
\newcommand{\pts}{\mathcal{P}}
\newcommand{\eps}{\epsilon}
\newcommand{\lines}{\mathcal{L}}
\newcommand{\SSS}{\mathcal{S}} 
\newcommand{\MMM}{\mathcal{M}}
\newcommand{\CCC}{\mathcal{C}}
\begin{document}
\title{Curves in $\RR^4$ and two-rich points}
\author{Larry Guth\thanks{Massachusetts Institute of Technology, Cambridge MA. Supported by a Simons Investigator award.} \and Joshua Zahl\thanks{Massachusetts Institute of Technology, Cambridge, MA. Supported by a NSF Postdoctoral Fellowship.}}
\date{}
\maketitle
\maketitle
\begin{abstract}
We obtain a new bound on the number of two-rich points spanned by an arrangement of low degree algebraic curves in $\RR^4$. Specifically, we show that an arrangement of  $n$ algebraic curves determines at most $C_\epsilon n^{4/3+3\epsilon}$ two-rich points, provided at most $n^{2/3+2\epsilon}$ curves lie in any low degree hypersurface and at most $n^{1/3+\epsilon}$ curves lie in any low degree surface. This result follows from a structure theorem about arrangements of curves that determine many two-rich points. 
\end{abstract}

\section{Introduction}
We will prove a new incidence bound for the number of two-rich points spanned by an arrangement of algebraic curves in $\RR^4$. This is an extension to four dimensions of a previous three-dimensional bound of the authors in \cite{GZ}. Bounds of this type were a key ingredient in the proof of the Erd\H{o}s distinct distances problem in the plane \cite{GK}, and have also been used by Ellenberg, Solymosi and the second author in \cite{SZ} to attack other planar incidence problems.

\begin{defn}[Two-rich point]
Let $\mathcal{L}$ be a set of algebraic curves in $\RR^d$. We say a point $x\in\RR^d$ is \emph{two-rich} if there are at least two curves from $\mathcal{L}$ that contain $x$. We denote the set of two-rich points by $\pts_2(\mathcal{L})$. 
\end{defn}

In \cite{GK}, Katz and the first author proved the following result about the incidence geometry of lines in $\RR^3$.

\begin{thm} \label{GKthm} If $\mathcal{L}$ is a set of $n$ lines in $\RR^3$ with at most $n^{1/2}$ lines in any plane or degree 2 surface, then $\mathcal{L}$ has at most $C n^{3/2}$ two-rich points.
\end{thm}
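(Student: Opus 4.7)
The plan is to combine polynomial partitioning on $\pts_2(\lines)$ with a structural theorem about lines lying in an algebraic surface in $\RR^3$. As a first step, I apply polynomial partitioning to $\pts_2(\lines)$: for $D \sim n^{1/2}$, choose $P \in \RR[x_1,x_2,x_3]$ of degree at most $D$ such that each of the $O(D^3)$ connected components of $\RR^3 \setminus Z(P)$ contains at most $|\pts_2(\lines)|/D^3$ two-rich points. Each line not contained in $Z(P)$ meets $Z(P)$ in at most $D$ points and thus enters at most $D+1$ cells, so there are at most $nD$ line-cell incidences in total. Since a rich point in a cell forces at least two lines to enter that cell, at most $nD/2$ cells contain any rich point at all, and the rich points in cell interiors number at most
\[
\frac{nD}{2}\cdot\frac{|\pts_2(\lines)|}{D^3} = \frac{n}{2D^2}|\pts_2(\lines)| \leq \tfrac{1}{2}|\pts_2(\lines)|
\]
when $D \sim n^{1/2}$. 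Thus at least half of $\pts_2(\lines)$ lies on $Z(P)$, and it suffices to bound this remaining count by $O(n^{3/2})$.

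Next, I split the rich points on $Z(P)$ by whether both lines through them are contained in $Z(P)$. If at least one line $\ell$ through a rich point $p$ satisfies $\ell \not\subset Z(P)$, then $p \in \ell \cap Z(P)$, a set of size at most $D$, and summing over the at most $n$ lines of $\lines$ gives $\lesssim nD = n^{3/2}$ such rich points. It remains to bound rich points formed by pairs of lines in the subfamily $\lines_0 = \{\ell \in \lines : \ell \subset Z(P)\}$.

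The main obstacle is the bound on rich points of $\lines_0$, which is the technical heart of \cite{GK}. The key structural input is the \emph{flecnode polynomial} theorem: an irreducible algebraic surface of degree $d \geq 3$ in $\RR^3$ that contains more than $Cd^2$ lines must be ruled. Decomposing $Z(P) = \bigcup_i Z_i$ into irreducible components with $\sum_i \deg Z_i \leq D$, the hypothesis that every plane or quadric contains at most $n^{1/2}$ lines handles the components of degree $\leq 2$ directly, contributing $\lesssim D \cdot \binom{n^{1/2}}{2} \lesssim n^{3/2}$ rich points in total. For components of higher degree, one uses the flecnode theorem together with the fact that distinct lines in the ruling of a singly-ruled surface can only meet on its singular locus, which has bounded complexity, while interactions between distinct components or between ruling and non-ruling lines are controlled by Bezout. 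Combining all contributions gives the claimed $O(n^{3/2})$ bound on $|\pts_2(\lines)|$.
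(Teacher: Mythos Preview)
Your proposal is essentially correct and follows the original Guth--Katz argument from \cite{GK}: a high-degree partitioning polynomial ($\deg P \sim n^{1/2}$) followed by the flecnode/ruled-surface analysis for the lines trapped in $Z(P)$. The paper, however, does not prove Theorem~\ref{GKthm} directly---it is quoted as a known result from \cite{GK}---and the closest thing to a proof here is Proposition~\ref{generalizedGuth}, which takes a genuinely different route: a \emph{constant}-degree partitioning polynomial (degree $E_1 = O_{d,D,\epsilon}(1)$), induction on $n$ applied inside each cell, and then a pruning of the resulting collection of low-degree surfaces via Lemma~\ref{unionsOfSurfacesLem}.

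The trade-offs are worth noting. Your approach gives the sharp bound $Cn^{3/2}$ with no $\epsilon$ loss, but it depends on the Cayley--Salmon/flecnode machinery and a case analysis of ruled surfaces that is specific to straight lines. The paper's low-degree inductive approach incurs an $n^{\epsilon}$ loss but avoids ruled-surface theory entirely, generalizes immediately from lines to bounded-degree algebraic curves inside an arbitrary three-dimensional variety $M\subset\RR^d$, and produces as a by-product the structural list $\mathcal{S}$ of two-dimensional surfaces that the four-dimensional argument of Section~\ref{mainProofSection} requires as input. Your final paragraph is, as you acknowledge, only a sketch: the ruled-surface analysis you allude to (exceptional points on singly-ruled components, doubly-ruled components forced to be quadrics, cross-component incidences via B\'ezout) is the genuinely hard part of \cite{GK} and takes several pages to carry out carefully, but the outline is accurate.
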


There are now several proofs of Theorem \ref{GKthm} and related results: \cite{K}, \cite{GZ}, and \cite{G}.  The paper \cite{K} generalizes Theorem \ref{GKthm} to any field.  The paper \cite{GZ} generalizes Theorem \ref{GKthm} further by allowing low degree algebraic curves instead of lines (also over any field).  An important open problem is to find good generalizations of Theorem \ref{GKthm} to higher dimensions.  We prove a generalization to four dimensions.  Our argument works only over $\RR$, but it applies to low degree curves and not only straight lines.  Here is our main theorem.  

\begin{thm} \label{introthm}
For every $D$ and every $\eps>0$, there is a constant $E$ so that the following holds. Let $\curves$ be an arrangement of $n$ irreducible curves of degree at most $D$ in $\RR^4$. Suppose that at most $n^{2/3+\epsilon}$ curves are contained in any three-dimensional hypersurface of degree $E$ or less, and at most $n^{1/3+2\epsilon}$ curves are contained in any two-dimensional surface of degree $100D^2$ or less. Then the number of two-rich points spanned by $\curves$ is $O_{D,\eps}(n^{4/3+3\eps})$. 
\end{thm}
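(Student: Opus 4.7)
The plan is a two-stage polynomial-partitioning descent, reducing the problem first to pairs of curves lying on a partition hypersurface in $\RR^4$ and then, via a Guth-Zahl-style analysis, to a three-dimensional incidence question on that hypersurface. The two hypotheses in the statement are calibrated precisely for this descent: the first bounds the number of curves that can accumulate on a bounded-degree three-fold in $\RR^4$, and the second bounds them on a bounded-degree surface once we have descended to three dimensions.

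I would fix a degree parameter $d$ of order $n^{1/3+\epsilon}$ and apply polynomial partitioning in $\RR^4$ to obtain a polynomial $P$ of degree at most $d$ whose zero set splits $\RR^4$ into $O(d^4)$ open cells. A degree $D$ curve not contained in $Z(P)$ meets $Z(P)$ in at most $Dd$ points, so after the standard counting each cell is met by $O(n/d^3)$ curves. Pairs of curves meeting inside a cell contribute $O(n^2/d^2) = O(n^{4/3+O(\epsilon)})$ two-rich points in total, which is within the target. The remaining two-rich points lie on $Z(P)$ and fall into two groups: (i) intersections of two curves both contained in $Z(P)$, and (ii) points where a curve contained in $Z(P)$ meets a curve transverse to $Z(P)$. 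Group (ii) is handled by a Bezout count, since each transverse curve contributes at most $Dd$ crossings with $Z(P)$, yielding $O(nd)$ crossings in total, which can be charged to the contained curves in a way that stays within the target.

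For group (i) one needs a structural statement: any irreducible component $V$ of $Z(P)$ containing many curves of $\curves$ must itself have degree bounded by a fixed constant $E$ depending only on $D$ and $\epsilon$. This is a flecnode/Cayley-Salmon-type assertion saying that an irreducible three-fold in $\RR^4$ carrying too many algebraic curves of degree at most $D$ must belong to a restricted family of low-degree ruled hypersurfaces. The first hypothesis then caps the number of curves on each such $V$ at $n^{2/3+\epsilon}$. On $V$ the problem becomes a three-dimensional incidence question with $m \leq n^{2/3+\epsilon}$ curves, and the second hypothesis gives at most $n^{1/3+2\epsilon} \approx m^{1/2}$ of them on any surface of degree $100 D^2$. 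A \cite{GZ}-style bound applied to this three-dimensional subproblem then yields $O(m^{3/2}) = O(n^{1+O(\epsilon)})$, comfortably below target.

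The principal obstacle is executing the three-dimensional step on a singular, possibly reducible hypersurface inside $\RR^4$ rather than on a smooth ambient $\RR^3$. This likely requires performing a second polynomial partition intrinsically on each relevant component $V$, combined with a curve-adapted flecnode analysis detecting when too many curves are pulled into a common two-surface inside $V$. Degree bookkeeping will be the delicate part: the constants $E$ and $100 D^2$ in the hypotheses must dominate every auxiliary polynomial produced during both partitions and the associated special-variety constructions, and the various $\epsilon$-losses must compose to give exactly the final $3\epsilon$ in the exponent of the statement.
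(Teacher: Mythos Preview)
Your proposal has a genuine gap at the structural step. You assert that ``any irreducible component $V$ of $Z(P)$ containing many curves of $\curves$ must itself have degree bounded by a fixed constant $E$ depending only on $D$ and $\epsilon$,'' labelling this a flecnode/Cayley--Salmon-type fact. No such result is available for three-folds in $\RR^4$, and as stated the claim is false: the cylinder $V=\{f(x_1,x_2,x_3)=0\}\subset\RR^4$ over any irreducible surface of degree $e$ is an irreducible hypersurface of degree $e$ ruled by a two-parameter family of lines, so ``carries many degree-$D$ curves'' does not force bounded degree. The Cayley--Salmon/flecnode theory for surfaces in three-space yields \emph{ruledness}, not a degree bound; the Guth--Katz argument then controls incidences on a ruled surface by a separate mechanism. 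You have neither step for hypersurfaces in $\RR^4$. Since your partitioning polynomial has degree $d\sim n^{1/3+\epsilon}$, an irreducible component of $Z(P)$ can have degree as large as $d$, and the theorem's hypothesis---which only constrains hypersurfaces of degree at most a fixed $E$---says nothing about it. In the extreme case all $n$ curves lie on a single irreducible component of degree $\sim n^{1/3}$, and the three-dimensional result you want to invoke (essentially Proposition~\ref{generalizedGuth} here) has constants depending on the degree of the ambient three-fold, so it gives no usable bound. You flag the three-dimensional step on $V$ as ``the principal obstacle,'' but the real obstacle sits earlier: getting the relevant three-folds to have bounded degree in the first place.

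The paper sidesteps this entirely by partitioning with a polynomial of \emph{constant} degree $E=O_{D,\epsilon}(1)$ and inducting on $n$. Every irreducible component of $Z(P)$ then automatically has degree at most $E$, so the hypothesis applies directly and no structure theorem for three-folds is needed. The cost is that a single partition no longer finishes the job: one applies the induction hypothesis inside each cell, obtaining in each cell a short list of three-folds $\mathcal{M}_\Omega$ and surfaces $\mathcal{S}_\Omega$ absorbing most two-rich points, and must then \emph{process} the union of these lists down to the required sizes $n^{1/3-\epsilon}$ and $n^{2/3-2\epsilon}$. This processing---replacing three-folds that contain too few curves by surfaces via Proposition~\ref{generalizedGuth}, and, in the other direction, collapsing clusters of surfaces back into a single three-fold via a two-rich-\emph{curves} bound for surfaces in $\RR^4$ (Proposition~\ref{GZForSurfaces}, proved by slicing with a generic complex hyperplane and invoking the $\CC^3$ curve estimates of \cite{GZ})---is the main technical content of the paper and has no analogue in your outline.
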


\subsection{Previous work}
In \cite{SS}, Sharir and Solomon established a new incidence bound for points and lines in $\RR^4$ under the non-degeneracy condition that not too many lines lie in any plane, hyperplane, or quadric hypersurface. Sharir and Solomon establish essentially sharp bounds on the number of $k$--rich points when $k$ is large, but they do not consider the problem of bounding the number of two-rich points.  (Also, their results are only for lines and don't apply to low degree algebraic curves.)

Like Sharir and Solomon's results, Theorem \ref{twoRichPtsThm} is only proved over the reals. However, while Sharir and Solomon's result is probably extremely difficult to prove over finite fields (in particular, it would imply a sharp Szemer\'edi-Trotter bound in $\mathbb{F}_p^2$), Theorem \ref{twoRichPtsThm} is almost certainly true in finite fields, and while such a result is out of the reach of current methods, we suspect that it is much less difficult than proving a sharp analogue of the Szemer\'edi-Trotter theorem over finite fields. 

\subsection{Outline of the proof}

Our approach to Theorem \ref{introthm} is based on polynomial partitioning.  The paper \cite{G} proves (a slightly weaker version of) Theorem \ref{GKthm} using polynomial partitioning.  The argument there provides the framework for our approach, but the 4-dimensional case is much subtler.  We will first describe the framework from \cite{G} and then the new ideas.  Also, there is a crucial point of the argument where we need to work over $\CC$ and apply the incidence estimates for complex algebraic curves in $\CC^3$ from \cite{GZ}.

The argument from \cite{G} is based on induction, and to make the induction close, one actually proves a slightly stronger theorem.  The theorem says that for any set of $n$ lines in $\RR^3$, if the number of two-rich points is much larger than $n^{3/2}$, then most of the two-rich points come from a small number of low degree varieties.  To state the theorem, we will use the following notation.  If $\mathcal{V}$ is a set of varieties in $\RR^d$ and $Z\subset\RR^d$ is a (higher-dimensional) variety, then we define 

$$\mathcal{V}_Z=\{V\in\mathcal{V}\colon V\subset Z\}. $$

Generalizing the argument from \cite{G} a little, we will prove the following result:

\begin{prop} \label{Ginduct} (See Proposition \ref{generalizedGuth}) For any $D$ and $\epsilon > 0$, there are constants $C$ and $C^\prime$ so that the following holds.  If $\frak L$ is a set of $n$ irreducible curves in $\RR^3$ of degree at most $D$, then there is a set $\mathcal{Z}$ of algebraic surfaces so that

\begin{enumerate}

\item Each surface $Z \in \mathcal{Z}$ is an irreducible surface of degree at most $C^\prime$.

\item Each surface $Z \in \mathcal{Z}$ contains at least $n^{1/2 + \epsilon}$ curves of $\frak L$.

\item $|\mathcal{Z}| \le 2 n^{1/2 - \epsilon}$.

\item $| P_2(\frak L) \setminus \bigcup_{Z \in \mathcal{Z}} P_{2}(\frak L_Z) | \le C L^{3/2 + \epsilon}.$

\end{enumerate}

\end{prop}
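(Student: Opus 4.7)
The plan is to take $\mathcal{Z}$ to be the collection of all irreducible algebraic surfaces in $\RR^3$ of degree at most $C'$ (a constant depending on $D$ and $\epsilon$) that contain at least $n^{1/2+\epsilon}$ curves of $\mathfrak{L}$. Conditions (1) and (2) then hold by construction. For (3), B\'ezout's theorem implies that any two distinct members $Z, Z' \in \mathcal{Z}$ meet in a one-dimensional variety of degree at most $(C')^2$, hence share at most $(C')^2$ of the irreducible curves of $\mathfrak{L}$. A Bonferroni-style inclusion-exclusion inequality,
$$n \;\geq\; \Bigl|\bigcup_{Z \in \mathcal{Z}} \mathfrak{L}_Z\Bigr| \;\geq\; |\mathcal{Z}| \cdot n^{1/2+\epsilon} \;-\; \binom{|\mathcal{Z}|}{2}(C')^2,$$
then forces $|\mathcal{Z}| \leq 2 n^{1/2-\epsilon}$ for $n$ sufficiently large.

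For (4), I would argue by strong induction on $n$, adapting the polynomial-partitioning framework of \cite{G} to low-degree curves via \cite{GZ}. Apply polynomial partitioning to produce a polynomial $P$ of degree $D_1 \approx n^{1/2-\epsilon/2}$ whose open cells each meet $O(nD/D_1^2) = O(n^{\epsilon})$ curves of $\mathfrak{L}$. Two-rich points lying in the open cells are bounded by the trivial per-cell estimate $D^2\binom{n_C}{2}$ summed over the $O(D_1^3)$ cells, which totals $O(n^{3/2+\epsilon})$. Two-rich points on $Z(P)$ are then subdivided by how the two incident curves meet $Z(P)$: if at least one is not contained in $Z(P)$, B\'ezout bounds the count by $O(nDD_1) = O(n^{3/2-\epsilon/2})$; and if both lie in $Z(P)$ but in distinct irreducible components, a further B\'ezout argument between the components yields the same bound.

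The main obstacle is handling two-rich points where both incident curves lie in a common irreducible component $Z_0$ of $Z(P)$. If $\deg Z_0 \leq C'$ and $Z_0$ is globally rich, then $Z_0 \in \mathcal{Z}$ and such points are absorbed; if $\deg Z_0 \leq C'$ but $Z_0$ is not rich, summing the trivial bound $D^2\binom{|\mathfrak{L}_{Z_0}|}{2}$ over all such components contributes $O(n^{3/2+\epsilon})$. The delicate case is a high-degree irreducible component $Z_0$ containing many curves; there I would invoke the inductive hypothesis on the strictly smaller sub-arrangement $\mathfrak{L}_{Z_0}$, which produces its own set of degree-$\leq C'$ rich surfaces. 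Each such surface either is already globally rich (and so lies in $\mathcal{Z}$) or has its contribution absorbed into the error term by the non-richness bound. The induction closes because the partition step saves a factor of $n^{-\epsilon/2}$ in the cell contribution compared to the trivial exponent, offsetting the constants that accumulate through the recursion.
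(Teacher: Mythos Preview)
Your treatment of (1)--(3) is essentially the content of the paper's Lemma~3.2, and is fine up to a minor caveat: the Bonferroni inequality as you wrote it, taken over all of $\mathcal{Z}$, does not by itself force $|\mathcal{Z}|$ to be small, since for large $|\mathcal{Z}|$ the quadratic term dominates and the inequality becomes vacuous. One fixes this by applying the same inequality to an arbitrary subcollection of size $\lfloor 2n^{1/2-\epsilon}\rfloor+1$ and deriving a contradiction; this is exactly what the paper does.

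Your strategy for (4), however, is genuinely different from the paper's and has a real gap. The paper partitions with a polynomial of \emph{constant} degree $E_1=O_{D,\epsilon}(1)$ and applies the full inductive hypothesis \emph{inside each cell} $\Omega$; the gain comes from the numerology $\sum_\Omega C|\mathcal{L}_\Omega|^{3/2+\epsilon}\lesssim E_1^3(n E_1^{-2})^{3/2+\epsilon}=E_1^{-2\epsilon}Cn^{3/2+\epsilon}$, and one chooses the constant $E_1$ large enough that $E_1^{-2\epsilon}$ beats the implied constants. Crucially, because $\deg P\le E_1=O(1)$, every irreducible component of $Z(P)$ already has degree at most $C'$ and is simply thrown into the surface collection before pruning --- there is no ``high-degree component'' case at all.

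In your scheme you take $D_1\approx n^{1/2-\epsilon/2}$, dispose of the cells trivially, and recurse instead on $\mathfrak{L}_{Z_0}$ for each high-degree component $Z_0$ of $Z(P)$. But nothing in the partitioning theorem prevents essentially all $n$ curves from lying in a single irreducible component $Z_0$ of $Z(P)$; indeed, if the curves happen to lie on some irreducible surface of degree $\le D_1$, the partitioning polynomial may simply return that surface. In that event $|\mathfrak{L}_{Z_0}|$ equals $n$ (or $n-1$), the inductive bound is $C|\mathfrak{L}_{Z_0}|^{3/2+\epsilon}\approx Cn^{3/2+\epsilon}$ with the \emph{same} constant $C$, and there is no room to absorb even the cell contribution $\sim n^{3/2+\epsilon/2}$ --- the induction does not close. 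Your remark that ``the partition step saves a factor of $n^{-\epsilon/2}$'' concerns only the cell term and does not touch this recursion. A second, related problem: the surfaces produced by induction on each $Z_0$ that are rich in $\mathfrak{L}_{Z_0}$ but not in $\mathfrak{L}$ each contribute up to $n^{1+2\epsilon}$ two-rich points, and summing $|\mathcal{Z}_{Z_0}|\le 2|\mathfrak{L}_{Z_0}|^{1/2-\epsilon}$ of them over as many as $D_1$ components can reach $n^{7/4}$, well above the target.
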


This result is proved using polynomial partitioning and induction.  Polynomial partitioning was introduced in \cite{GK}, where the following result was proved:

\begin{thm} \label{polypart1} (\cite{GK}) If $X$ is a finite set in $\RR^d$ and $E \ge 1$, then there is a non-polynomial $P$ of degree at most $E$ so that each component of $\RR^d \setminus Z(P)$ contains at most $C_d E^{-d} |X|$ points of $X$.  
\end{thm}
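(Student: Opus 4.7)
The plan is to deduce Theorem \ref{polypart1} from the polynomial ham sandwich theorem by iterated bisection. Recall that the polynomial ham sandwich theorem (Stone--Tukey, generalized) states that for any finite collection of $N$ finite sets in $\RR^d$ with $N \le \binom{d+s}{d} - 1$, there is a nonzero polynomial of degree at most $s$ whose zero set simultaneously bisects each set, meaning that each of the two open regions it defines contains at most half of each set. I would either invoke this as a black box or sketch its proof: polynomials of degree at most $s$ in $d$ variables form a vector space of dimension $\binom{d+s}{d}$, so projectivizing and considering the signed difference $|X_i \cap \{P>0\}| - |X_i \cap \{P<0\}|$ gives an odd map from a sphere to $\RR^N$, which must vanish by Borsuk--Ulam.

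The main construction is an iterative halving procedure. I would start with $X$ and apply polynomial ham sandwich with one set to find a degree-$1$ polynomial $P_1$ bisecting $X$ into two pieces each of size at most $|X|/2$. At stage $k$, having produced $2^{k-1}$ point sets (one per open region of $Z(P_1 \cdots P_{k-1})$), I apply polynomial ham sandwich to all of them simultaneously, obtaining a polynomial $P_k$ of degree at most $C_d\, 2^{(k-1)/d}$ (since we need $\binom{d+s}{d} \gtrsim 2^{k-1}$, hence $s \sim 2^{(k-1)/d}$) whose zero set halves each of them. After $j$ rounds with $2^j \approx E^d$, the product $P := P_1 P_2 \cdots P_j$ has degree at most
\[
\sum_{k=1}^{j} C_d\, 2^{k/d} \;\le\; C_d'\, 2^{j/d} \;\le\; C_d'' E,
\]
where the geometric series is dominated by its last term. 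Rescaling $E$ by the constant $C_d''$ gives degree at most $E$, and every connected component of $\RR^d \setminus Z(P)$ lies inside exactly one of the $2^j$ pieces, so contains at most $|X|/2^j \le C_d |X| E^{-d}$ points of $X$.

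The main subtlety to get right is the bookkeeping of points that land on the zero set at intermediate stages. Points of $X$ lying on $Z(P_k)$ can be assigned to either side, and one must verify that the refined pieces obtained by intersecting with successive zero sets still shrink by a factor of $2$ at each step. The cleanest way is to reformulate ``bisect'' as ``each open region contains at most half the points'' (with the ambiguity of boundary points resolved arbitrarily), and then absorb any point of $X$ that falls on $Z(P)$ into one of the neighboring regions for counting purposes, noting that such points contribute zero components anyway. Beyond this, the only calculation is the geometric-series bound on $\deg P$, which works precisely because $d \ge 1$.
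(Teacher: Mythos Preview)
The paper does not actually prove Theorem~\ref{polypart1}; it merely quotes the statement and attributes it to \cite{GK}. Your proposal is precisely the standard Guth--Katz argument (polynomial ham sandwich plus iterated bisection, with the geometric-series bound on the total degree), so it is correct and is exactly the proof one would find in the cited reference.
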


A little later, a more general version of polynomial partitioning was proven, which applies not just to finite sets of points but also to finite sets of lines, or more generally to finite sets of varieties.

\begin{thm}[Polynomial partitioning for varieties; see \cite{G2}, Theorem 0.3]\label{partitioningForVarietiesThm}
Let $\Gamma$ be a set of varieties in $\RR^d$, each of which has degree at most $D$ and dimension at most $e$. For each $E \geq 1$, there is a non-zero polynomial $P$ of degree at most $E$, so that each connected component of $\RR^d\backslash Z(P)$ intersects at most $C(d, e, D) E^{e-d}|\Gamma|$ varieties from $\Gamma$.
\end{thm}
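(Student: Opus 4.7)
The plan is to combine the polynomial ham sandwich theorem with a Milnor--Thom style bound on how many cells a single variety can meet, thereby extending the point-partitioning result (Theorem \ref{polypart1}) to collections of varieties.

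The first step is a geometric lemma: for any variety $V\subset\RR^d$ of dimension at most $e$ and degree at most $D$, and any polynomial $P$ of degree at most $E$ not vanishing identically on $V$, the set $V\setminus Z(P)$ has at most $C_1(d,e,D)E^e$ connected components. I would prove this by restricting $P$ to $V$ and applying the classical Oleinik--Petrovskii--Milnor--Thom bound on the sum of Betti numbers of a real algebraic set of complexity $\leq E$ inside an ambient variety of dimension $e$ (first slicing $V$ with a generic $(d-e)$-flat if $\dim V<e$). The geometric consequence is that any single $V\in\Gamma$ meets at most $C_1E^e$ connected components of $\RR^d\setminus Z(P)$.

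The second step constructs $P$ by iterated bisection. Equip each $\gamma\in\Gamma$ with its intrinsic Hausdorff measure $\mu_\gamma$, scaled to unit total mass. Let $T=\lceil d\log_2 E\rceil$. At stage $s=0,1,\ldots,T-1$, maintain a polynomial $Q_s$ whose complement has at most $2^s$ cells; in each cell apply the polynomial ham sandwich theorem to bisect the aggregate restriction of $\sum_\gamma\mu_\gamma$, producing a polynomial $R_s$ of degree $O(2^{s/d})$, and set $Q_{s+1}=Q_s R_s$. Summing the geometric series gives $\deg Q_T=O(E)$, and after $T$ bisections each cell carries aggregate measure at most $|\Gamma|/2^T\sim|\Gamma|E^{-d}$. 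Combined with the lemma (each $\gamma$ can contribute to at most $C_1 E^e$ cells) and a standard averaging, one concludes that no cell of $\RR^d\setminus Z(Q_T)$ can be met by more than $C(d,e,D)E^{e-d}|\Gamma|$ varieties.

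The main obstacle is the mismatch between what ham sandwich controls (balanced mass distribution) and what the conclusion demands (number of distinct varieties meeting each cell): a variety $\gamma$ may deposit nearly all its mass in one cell while still meeting many others along small pieces carrying negligible mass. The Milnor--Thom lemma caps this damage at $C_1 E^e$ cells per variety; the subtle point is ensuring that the iterative bisection converts this ``per variety'' bound into a ``per cell'' bound uniformly, which typically requires a slight refinement of the iteration (e.g., adjusting the number of rounds by a constant factor) together with a generic perturbation at each stage so that no $\gamma$ lies inside the partial partitioning surface $Z(Q_s)$.
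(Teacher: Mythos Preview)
First, note that the paper does not prove this theorem: it is quoted from \cite{G2} and used as a black box, so there is no in-paper proof to compare against.

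Your proposal assembles the two correct ingredients---the Barone--Basu type bound that each $\gamma\in\Gamma$ meets at most $C_1E^e$ cells of $\RR^d\setminus Z(P)$, and iterated polynomial ham sandwich---but the way you combine them does not work, and the gap is exactly the one you flag in your last paragraph without actually closing. Bisecting the aggregate Hausdorff measure $\sum_\gamma\mu_\gamma$ produces cells of small total \emph{mass}, whereas the conclusion concerns the \emph{number} of varieties meeting each cell; a variety can meet a cell in a set of arbitrarily small measure, so the mass bound carries no information about the count. Your ``standard averaging'' step then uses only the incidence inequality $\sum_O n(O)\le C_1E^e|\Gamma|$ together with a cell count of order $E^d$, which yields the correct \emph{average} but says nothing about the maximum---and neither input even refers to the mass bisection you performed. (Concretely: nothing you have written rules out all $|\Gamma|$ varieties passing through a single cell while depositing negligible mass there.) No constant-factor adjustment of the number of rounds, and no generic perturbation, converts an average bound into a uniform one. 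Guth's actual argument in \cite{G2} does not bisect mass: at each stage one chooses, for every cell $O$ and every $\gamma$ meeting $O$, finitely many representative points in $\gamma\cap O$ and bisects those finite point sets; the component-count lemma is then invoked \emph{inside each round of the iteration} to bound how many new variety--cell incidences the bisecting surface can create, and it is this bookkeeping that forces the \emph{maximum} per-cell count to decay geometrically rather than just the average.
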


Here is the rough idea of the proof of Theorem \ref{Ginduct}.  We pick a degree $E \le C^\prime$ and apply Theorem \ref{partitioningForVarietiesThm}.  This theorem tells us that there is a polynomial $P$ of degree at most $E \le C^\prime$ so that each connected component of $\RR^3 \setminus Z(P)$ intersects not too many curves of $\mathcal{L}$.  For each connected component $\Omega$ of $\RR^3 \setminus Z(P)$, we let $\frak L_\Omega$ be the set of curves of $\frak L$ that intersect $\Omega$.  By induction, we can assume that Theorem \ref{Ginduct} holds for each $\frak L_\Omega$.  For each $\Omega$, we get a set of irreducible surfaces $\mathcal{Z}_\Omega$ of degree at most $C^\prime$.  Now we define $\mathcal{Z}_1$ to be the union of $\mathcal{Z}_\Omega$ over all the components $\Omega \subset \RR^3 \setminus Z(P)$, together with all the irreducible components of $Z(P)$.  Now $\mathcal{Z}_1$ is a set of irreducible surfaces of degree at most $C^\prime$, and a simple calculation shows that it obeys the bound on two-rich points in Theorem \ref{Ginduct}.  However, $\mathcal{Z}_1$ does not close the induction, because there are too many surfaces in $\mathcal{Z}_1$, and not all the surfaces contain enough curves of $\curves$.

To find $\mathcal{Z}$, we need to process $\mathcal{Z}_1$.  The processing in \cite{G} is a simple pruning mechanism: we let

$$ \mathcal{Z} := \{ Z \in \mathcal{Z}_1 \textrm{ so that } | \mathcal{L}_Z | \ge n^{1/2 + \eps} \}. $$

\noindent It turns out that $|\mathcal{Z}|\le 2 n^{1/2 - \eps}$, and that $\mathcal{Z}$ obeys all the desired properties and closes the induction.  We review this argument in detail in Section \ref{curvesinR3}, where we prove (a slightly more general version of) Theorem \ref{Ginduct}.

The proof of Theorem \ref{introthm} has a similar framework.  We use polynomial partitioning and induction to prove a slightly stronger result.  Here is the stronger result.

\begin{thm}\label{twoRichPtsThm}
For every $D$ and every $\eps > 0$, there are constants $C$ and $C^\prime$ so that the following holds. Let $\curves$ be an arrangement of $n$ irreducible curves of degree at most $D$ in $\RR^4$. Then there are sets $\mathcal M$ and $\mathcal S$ with the following properties. $\mathcal M$ is a set of irreducible real algebraic varieties of dimension at most three, and it has cardinality at most $n^{1/3-\eps}$. Each variety $M\in\mathcal{M}$ has degree at most $C^\prime$, and $|\curves_M|\geq n^{2/3+\eps}.$ $\mathcal S$ is a set of irreducible real algebraic varieties of dimension at most two, and it has cardinality at most $n^{2/3-2\eps}$. Each variety $S\in\mathcal{S}$ has degree at most $100D^2$, and $|\curves_S|\geq n^{1/3+2\eps}.$ Finally, the number of two-rich points occurring between pairs of curves that are not contained in some surface $M\in\mathcal M$ or $S\in\mathcal S$ is small. More precisely, we have the bound
\item \begin{equation}
       \Big|\pts_2(\curves)\ \backslash\ \Big(\bigcup_{M\in\mathcal{M}}\pts_2(M(\lines)) \cup \bigcup_{S\in\mathcal{S}}\pts_2(\curves_S)\Big)\Big|\leq Cn^{4/3+3\eps}.
      \end{equation}
\end{thm}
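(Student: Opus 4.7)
The plan is to follow the polynomial-partitioning-and-induction framework of Proposition \ref{Ginduct}, but with two pruning stages---one for three-dimensional varieties (producing $\mathcal{M}$) and one for two-dimensional surfaces (producing $\mathcal{S}$)---mirroring the two codimensions present in $\RR^4$. The argument is by strong induction on $n$. Given $\curves$, we apply Theorem \ref{partitioningForVarietiesThm} with $d=4$, $e=1$, and a degree $E = E(D,\eps)$ to be chosen, obtaining a polynomial $P$ of degree at most $E$ so that each connected component of $\RR^4 \setminus Z(P)$ meets at most $O(n/E^{3})$ curves of $\curves$. We split $\curves$ into the set $\curves^{\mathrm{in}}$ of curves contained in $Z(P)$ and its complement $\curves^{\mathrm{out}}$.

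Two-rich points fall into three cases. First, if both curves lie in $\curves^{\mathrm{out}}$ and meet in an interior point of some cell $\Omega$, we apply the inductive hypothesis to the curves of $\curves^{\mathrm{out}}$ meeting $\Omega$---which number at most $C(D)\, n / E^{3}$---to obtain structures $\mathcal{M}_\Omega$ and $\mathcal{S}_\Omega$; their unions across cells form preliminary collections that will feed into $\mathcal{M}$ and $\mathcal{S}$. A bookkeeping summation over the at most $O(E^{4})$ cells gives an uncaptured bad-point bound of $O(E^{4}(n/E^{3})^{4/3+3\eps}) = O(E^{-9\eps}n^{4/3+3\eps})$, which is at most $\tfrac{1}{2} C n^{4/3+3\eps}$ once $E$ is chosen large enough depending on $\eps$ and $C$. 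Second, if exactly one of the two curves lies in $Z(P)$, the outside curve crosses $Z(P)$ in at most $DE$ points, so such two-rich points total $O(D E n)$, negligibly small. Third, if both curves lie in $Z(P)$, they are contained in three-dimensional irreducible components of $Z(P)$.

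For this last case, let $M_1,\dots,M_k$ be the three-dimensional irreducible components of $Z(P)$, so $k \leq E$. Call $M_i$ \emph{heavy} if $|\curves_{M_i}| \geq n^{2/3+\eps}$ and \emph{light} otherwise. At most $n^{1/3-\eps}$ of the $M_i$ are heavy, so all heavy components can be added to $\mathcal{M}$. For two-rich points whose two curves both lie in a single light $M_i$, we apply the three-dimensional structure theorem Proposition \ref{generalizedGuth} to $\curves_{M_i}$ viewed as an arrangement of curves inside the three-variety $M_i$, producing two-dimensional surfaces of degree $\leq 100D^2$ containing many curves (added to $\mathcal{S}$) together with an uncaptured-points bound of order $|\curves_{M_i}|^{3/2+\eps'}$. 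Using $|\curves_{M_i}| \leq n^{2/3+\eps}$, the total over light components is $\leq (n^{2/3+\eps})^{1/2+\eps'}\sum_{i\text{ light}}|\curves_{M_i}| \leq n^{4/3 + O(\eps)}$. Two-rich points whose two curves lie in distinct components $M_i \neq M_j$ sit on the at most two-dimensional intersection $M_i \cap M_j$ and are absorbed into the two-dimensional structures already produced, or accounted for by an incidence bound for curves on a two-variety.

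The main obstacle is closing the two-level induction uniformly in $n$: one must simultaneously prune the preliminary $\mathcal{M}$-candidates to cardinality $\leq n^{1/3-\eps}$ and the $\mathcal{S}$-candidates to cardinality $\leq n^{2/3-2\eps}$, while ensuring the uncaptured-point counts from all three cases combine to at most $C n^{4/3+3\eps}$ with $C$ depending only on $D$ and $\eps$. This forces a delicate balancing of exponents with $\eps$-slack and dictates the curious threshold choices $n^{2/3+\eps}$ and $n^{1/3+2\eps}$. A second and subtler obstacle, already flagged in the outline, is invoking Proposition \ref{generalizedGuth} inside a three-dimensional variety $M_i \subset \RR^4$ rather than in $\RR^3$ itself: at this step one complexifies and applies the incidence bound for algebraic curves in $\CC^3$ from \cite{GZ}, which is the right tool both for controlling curves in a single $M_i$ and for curves meeting inside the lower-dimensional intersections $M_i \cap M_j$.
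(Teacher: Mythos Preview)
Your outline captures the partitioning-and-induction skeleton correctly, but it is missing the central new idea of the paper, and it mislocates where the $\CC^3$ input from \cite{GZ} is actually used. The real difficulty is not applying Proposition \ref{generalizedGuth} inside a three-variety $M_i\subset\RR^4$---that proposition is already stated and proved for three-dimensional varieties in $\RR^d$ by purely real methods (Corollary \ref{corOfVarPartThm})---but rather closing the induction on the \emph{surface} side. After you collect all candidate two-dimensional surfaces (from the $\mathcal{S}_\Omega$, from the $\mathcal{S}_{M_i}$ produced by Proposition \ref{generalizedGuth}, and from the intersections $M_i\cap M_j$), you only know $|\mathcal{S}|\lesssim_{D,\eps} n^{2/3-2\eps}$; simply pruning the surfaces with fewer than $n^{1/3+2\eps}$ curves does \emph{not} bring the count below $n^{2/3-2\eps}$, because many distinct low-degree surfaces can each contain $\ge n^{1/3+2\eps}$ curves if they all sit inside a common low-degree three-variety and share curves heavily. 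Your ``delicate balancing of exponents'' remark does not address this; it is a genuine structural obstruction, not a bookkeeping issue.

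The paper resolves this with a second replacement maneuver that goes in the opposite direction from your light-$M_i$ step: when too many surfaces of $\mathcal{S}$ cluster in a three-variety $M$, one removes those surfaces and adds $M$ to $\mathcal{M}$. Detecting such clustering requires a ``two-rich curves'' bound for two-dimensional surfaces in $\RR^4$ (Proposition \ref{GZForSurfaces}), which is proved by complexifying, slicing with a generic complex hyperplane, and applying the $\CC^3$ curve estimate from \cite{GZ}. That is the only place the complex input enters, and without it the induction does not close. A secondary gap: Proposition \ref{generalizedGuth} yields surfaces of degree $O_{D,E,\eps}(1)$, not $\le 100D^2$; the paper needs an additional step (Corollary \ref{manyTwoRichDecomposition}, again ultimately from \cite{GZ}) to discard high-degree surfaces with too many two-rich points before the final pruning.
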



By Theorem \ref{partitioningForVarietiesThm}, we can choose a polynomial $P$ of degree $E \le C^\prime$ so that each component of $\RR^4 \setminus Z(P)$ intersects a controlled number of curves from $\curves$.  For each component $\Omega$ of $\RR^4 \setminus Z(P)$, we let $\curves_\Omega$ be the set of curves of $\curves$ that intersect $\Omega$.  By induction, we can assume that Theorem \ref{twoRichPtsThm} holds for each $\curves_\Omega$.  The inductive hypothesis gives us a set of 3-dimensional varieties $\mathcal{M}_\Omega$ and a set of two-dimensional varieties $\mathcal{S}_\Omega$.  We now define $\MMM_1$ to be the union of the irreducible components of $Z(P)$ together with $\bigcup_\Omega \MMM_\Omega$.  Similarly, we define $\SSS_1$ to be the union $\bigcup_\Omega \SSS_\Omega$.  A simple calculation shows that $\MMM_1$ and $\SSS_1$ satisfy the bound about two-rich points at the end of Theorem \ref{twoRichPtsThm}.  However, they don't close the induction, because there are too many varieties in $\MMM_1$ and $\SSS_1$, and each variety may not contain enough curves.

To close the induction, we have to process $\MMM_1$ and $\SSS_1$.  This processing is subtler than in three dimensions, and the processing scheme that we use is the main contribution of the paper.  This processing is not a simple pruning process, like it was in the three-dimensional case.  We introduce two new processing maneuvers.  Sometimes, we remove a 3-dimensional variety $M$ from $\MMM_1$, and add a set of two dimensional varieties, $\{ S_j \}$, to $\SSS_1$, where the $S_j$ are subvarieties of $M$.   At other times, we remove a set of two-dimensional varieties, $\{ S_j \}$, from $\SSS_1$, and add to $\MMM_1$ a 3-dimensional variety $M$ containing the $S_j$.  A key insight is that getting this replacement scheme to work involves variations of the original problem.

For example, in order to carry out the second replacement maneuver from the last paragraph, we need a variation of Proposition \ref{Ginduct} where curves are replaced by two-dimensional surfaces and two-rich points are replaced by two-rich curves -- see Proposition \ref{GZForSurfaces} below.  If $\SSS$ is a set of two-dimensional surfaces, we let $\CCC_2(\SSS)$ be the set of two-rich curves of $\SSS$, i.e.~the set of irreducible algebraic curves that lie in at least two of the surfaces of $\SSS$.  Proposition \ref{Ginduct} roughly says that for $n$ surfaces in $\RR^4$, the number of 2-rich curves is at most $n^{3/2 + \eps}$, except for the contribution coming from surfaces contained in a small number of low degree 3-dimensional varieties.  

To prove such a result for two-dimensional surfaces in $\RR^4$, it looks like a reasonable idea to intersect all the objects with a generic hyperplane $H \subset \RR^4$.  For a generic $H$, each surface $S \in \SSS$ will intersect $H$ in an irreducible curve (possibly empty).  In this way, we get a set of irreducible curves $\curves_H$ in the 3-dimensional plane $H$.  We can control the two-rich points of $\curves_H$ using Proposition \ref{Ginduct}.  But this does not allow us to control the two-rich curves of $\SSS$.  The problem is that a curve $\gamma \in \CCC_2(\SSS)$ may not intersect the plane $H$.  If $\gamma$ is a small closed curve in $\RR^4$, then most hyperplanes $H$ fail to intersect $\gamma$.  If $\CCC_2(\SSS)$ consists of many small closed curves that are spread out in $\RR^4$, then every hyperplane $H$ will intersect only a small number of these curves.

The situation improves if we switch from $\RR^4$ to $\CC^4$.  An algebraic curve $\gamma$ in $\CC^4$ intersects almost every (complex) hyperplane $H$ in $\CC^4$.  By intersecting with a hyperplane, we can reduce a question about two-rich curves of surfaces in $\CC^4$ to a problem about two-rich points of curves in $\CC^3$.  We then apply the two-rich point estimate about curves in $\CC^3$ from \cite{GZ}.  In summary, we prove a key lemma about the incidences of 2-dimensional surfaces in $\RR^4$ by using the results on curves in $\CC^3$ from \cite{GZ}.  

We were hoping that we might be able to prove a result analogous to Theorem \ref{twoRichPtsThm} in all dimensions, using polynomial partitioning and induction on the dimension, but we have not been able to do so.  The problem is that we use an incidence theorem in $\CC^3$ to prove an incidence theorem in $\RR^4$.  If we had a similar incidence theorem in $\CC^4$, the tools in this paper would probably lead to an incidence theorem in $\RR^5$.  However, we don't know how to prove such an incidence theorem in $\CC^4$.  

In a broader sense, the problem is that different tools work well in different fields.  Polynomial partitioning works over $\RR$.  But many tools in algebraic geometry work better over $\CC$ because $\CC$ is algebraically closed.  In particular, intersecting a variety with a hyperplane works better over $\CC$.  (A similar tension appears in \cite{SS}, where polynomial partitioning plays a crucial role, but some parts of the argument are carried out over $\CC$.)  The second author has adapted polynomial partitioning arguments to the complex setting in certain situations in \cite{ShZ} (joint with Sheffer and Szab\'o) and in \cite{Z2}, but these ideas are not yet enough to adapt the main argument in this paper to $\CC$.  In the last section of the paper, we share some speculations and failed attempts to get polynomial partitioning to work over $\CC$.

\subsection{Thanks}
The authors would like to thank Misha Rudnev for pointing out an error in an earlier version of this manuscript.

\section{Notation and background}
\subsection{Notation}
We write $A=O(B)$ or $A\lesssim B$ to mean $A\leq C B$ for some absolute constant $C$. If the constant is allowed to depend on a set of parameters $t_1,\ldots,t_\ell$, then we will write $A=O_{t_1,\ldots,t_\ell}(B)$ or $A\lesssim_{t_1,\ldots,t_\ell}B$. 
\subsection{Real algebraic geometry}
\begin{defn}[Degree and dimension of a real variety]\label{defnOfRealDegree}
Let $V\subset\RR^d$ be a real algebraic variety, and let $V^*$ be the smallest complex variety in $\CC^d$ that contains $V$. We define the degree of $V$ to be the degree of $V^*$; the latter is the sum of the degrees of the irreducible components of $V^*$. For the dimension of a real variety we refer the reader to \cite{BCR}. Informally, however, the dimension of a real algebraic variety is the largest integer $d^\prime$ so that the variety contains subset that is homeomorphic to the open unit cube $(0,1)^{d^\prime}$.
\end{defn}

Some of the results about real varieties that we will use do not refer to the degree of a variety. Instead, they refer to the number and degree of the polynomials needed to define the variety. Results of this type begin with hypotheses such as ``let $M$ be a real variety that can be defined by $a_1$ polynomials of degree at most $a_2$.'' If $M\subset\RR^d$ has degree $D$, then $a_1,a_2 = O_{D,d}(1)$. Similarly, if $M$ can be defined by $a_1$ polynomials of degree at most $a_2$, then the degree of $M$ is $O_{a_1,a_2,d}(1)$. To keep our notation consistent, we will quote these results by specifying the degree of the varieties involved. For our purposes, this will be an equivalent formulation. 

The following theorem describes the number of (Euclidean) connected components that can be ``cut out'' by a real polynomial.
\begin{thm}[Barone and Basu, \cite{BB}]\label{BBThm}
Let $M\subset\RR^d$ be a variety of degree $D$ and dimension at most $d^\prime$. Let $P\in\RR[x_1,\ldots,x_d]$ be a polynomial. Then both $M\cap Z(P)$ and $M\backslash Z(P)$ contain $O_{d,d^\prime,D}(d^\prime)^{\deg(P)}$ connected components.
\end{thm}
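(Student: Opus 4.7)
The plan is to follow the Morse-theoretic strategy standard in quantitative real algebraic geometry. I first put everything in general position: write $M = Z(Q_1) \cap \cdots \cap Z(Q_s)$ where $s$ and $\deg Q_i$ are bounded in terms of $d$ and $D$, and perturb so that $M$ is a smooth manifold of pure dimension $d'$ and the gradients of $P, Q_1, \ldots, Q_s$ are in general position wherever they meet. Any components lost under the perturbation are recovered by a standard limiting argument as the perturbation parameter tends to zero, so this preserves the count up to the implied constant.

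For the $M \setminus Z(P)$ estimate I would use the classical observation that on each connected component, a generic linear functional $\ell$ attains a minimum. Such a minimum is either an interior critical point of $\ell|_M$, or a critical point of $\ell$ restricted to the boundary $M \cap Z(P)$. The interior critical points depend only on $M$, contributing $O_{d,d',D}(1)$. The boundary critical points are the isolated solutions of a polynomial system involving $Q_1 = \cdots = Q_s = 0$, $P = 0$, together with appropriate rank conditions on the Jacobian of $\ell, Q_1, \ldots, Q_s, P$. A Bezout-type bound for this system, exploiting that the $Q_i$ already cut out a $d'$-dimensional variety, yields $O_{d,d',D}((\deg P)^{d'})$.

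For the $M \cap Z(P)$ estimate I would apply the previous bound with $P$ replaced by $P^2 - \delta$ for small $\delta>0$. Each component of $M \cap Z(P)$ is obtained as a limit of a nested family of components of the open set $M \cap \{P^2 < \delta\}$ as $\delta$ shrinks; thus the number of components of $M \cap Z(P)$ is dominated by the number of components of $M \setminus Z(P^2 - \delta)$, and the previous estimate applies with $\deg(P^2-\delta) = 2\deg P$, preserving the $O_{d,d',D}((\deg P)^{d'})$ bound.

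The main obstacle is obtaining the exponent $d'$ rather than the ambient $d$ in the Bezout count of the boundary critical points. Naive Bezout in $\RR^d$ would yield only $(\deg P)^d$; squeezing the exponent down to the intrinsic dimension of $M$ requires either a careful elimination of variables against the equations $Q_i = 0$, or an explicit projection of the critical system to an affine space of dimension $d'$, together with a verification that this projection preserves isolated-solution counts generically. This dimension-reduction step is the technical heart of the argument and is precisely where the hypothesis $\dim M \le d'$ is used.
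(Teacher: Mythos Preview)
The paper does not prove this theorem at all: it is quoted from Barone--Basu \cite{BB} and used as a black box, so there is no proof in the paper to compare your proposal against.

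As for the content of your sketch, it is broadly in the spirit of how results of this type are established in quantitative real algebraic geometry (critical-point counting on a stratification, infinitesimal thickening to pass from $M\cap Z(P)$ to an open set), and you correctly identify the crux: obtaining the exponent $d'$ rather than the ambient $d$ in the B\'ezout step. However, the sketch is not yet a proof. The perturbation-to-general-position step is delicate here, since connected-component counts are not stable under arbitrary perturbations and one must work with infinitesimal deformations over a real closed extension (Puiseux series) rather than literal small real perturbations; and the dimension-reduction in the B\'ezout count is exactly the substantial technical contribution of Barone--Basu, not something one can wave through with ``careful elimination of variables.'' If you want to supply a proof rather than a citation, you would need to reproduce that machinery.
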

Observe that in the special case $M=d$, Theorem \ref{BBThm} states that the number of connected components of $\RR^d\backslash Z(P)$ is $O_{d}(d^{\deg P})$. This is known as the Milnor-Thom theorem.

\begin{prop}\label{complexityOfProjections}
Let $\gamma\subset\RR^d$ be a real algebraic variety of degree $D$. Let $\pi\colon\RR^d\to\RR^{d^\prime}$ be the projection to the first $d^\prime$ coordinates. Then the Zariski closure of $\pi(\gamma)$ is an algebraic variety of degree $O_{d,D}(1)$ and dimension at most $\dim(\gamma)$.
\end{prop}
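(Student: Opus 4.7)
The plan is to reduce to the analogous statement for complex algebraic varieties, where linear projections are better behaved, and then combine a Bezout-style degree bound with the fiber dimension theorem. First I would pass to the complexification $\gamma^* \subset \CC^d$ from Definition~\ref{defnOfRealDegree}; it has complex degree $D$ and, being the smallest complex variety containing the real set $\gamma$, is invariant under complex conjugation. Let $V_\CC \subset \CC^{d^\prime}$ denote the complex Zariski closure of $\pi(\gamma^*)$; it inherits the conjugation symmetry, so it is cut out by polynomials with real coefficients. The real locus $V_\CC \cap \RR^{d^\prime}$ is then a real algebraic variety containing $\pi(\gamma)$, so if $W_\RR$ denotes the real Zariski closure of $\pi(\gamma)$ we have $W_\RR \subset V_\CC \cap \RR^{d^\prime}$, and consequently the smallest complex variety containing $W_\RR$ is contained in $V_\CC$. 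Thus $\deg W_\RR \leq \deg V_\CC$ and $\dim_\RR W_\RR \leq \dim_\CC V_\CC$, and it suffices to bound these complex invariants.

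For the dimension, the image $\pi(\gamma^*)$ is Zariski dense in $V_\CC$ by construction, so the standard fiber dimension argument yields $\dim_\CC V_\CC \leq \dim_\CC \gamma^*$. Combined with the identification $\dim_\CC \gamma^* = \dim_\RR \gamma$ for complexifications of real varieties (see \cite{BCR}), this gives $\dim_\RR W_\RR \leq \dim \gamma$.

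For the degree, I would invoke a Bezout-style bound for linear projections: for an irreducible variety $V \subset \CC^d$ of degree at most $D$, the Zariski closure $\overline{\pi(V)}$ has degree bounded by a function of $d$ and $D$. Decomposing $\gamma^*$ into its at most $D$ irreducible components, each of degree at most $D$, reduces matters to the irreducible case. Taking projective closure $\overline{\gamma^*} \subset \mathbf{P}^d$ preserves the degree, and $\pi$ extends to a rational map $\mathbf{P}^d \dashrightarrow \mathbf{P}^{d^\prime}$ that factors as $d - d^\prime$ projections from a point. A single projection from a point $p \notin V$ satisfies $\deg \overline{\pi_p(V)} \cdot \deg(\pi_p|_V) = \deg V$, so it does not increase the degree; the case $p \in V$ can be handled by a generic perturbation argument, or by noting that such a projection strictly drops the degree. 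Iterating gives the desired bound on $\deg \overline{\pi(\gamma^*)}$, and hence on $\deg V_\CC$.

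The main obstacle is the quantitative degree bound for projections, where the interaction between the base locus of the rational extension and $\overline{\gamma^*}$ must be controlled at each step. Fortunately the proposition only requires a bound of the form $O_{d,D}(1)$ rather than a tight constant, so the projective-geometric argument above is adequate; alternatively, a direct elimination-theoretic approach (bounding the degrees of the generators of the elimination ideal $I(\gamma^*) \cap \CC[x_1, \ldots, x_{d^\prime}]$ via iterated resultants) yields the same conclusion.
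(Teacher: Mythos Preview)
The paper states Proposition~\ref{complexityOfProjections} without proof, treating it as a standard background fact, so there is no argument in the paper to compare against. Your complexification strategy is the natural route and the outline is sound; the dimension bound via the fiber dimension theorem and the degree bound via iterated point projections (or elimination) are both correct once the problem has been moved to $\CC^d$.

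There is one step, though, where the logic as written is invalid. You deduce ``$\deg W_\RR \leq \deg V_\CC$'' from the containment $(W_\RR)^* \subset V_\CC$. Containment of complex varieties does not by itself give a degree inequality: a curve of arbitrarily large degree can sit on a quadric surface. What actually holds here is the equality $(W_\RR)^* = V_\CC$, and this is what you should prove. Since $\gamma$ is Zariski dense in $\gamma^*$ by the definition of the complexification, any polynomial vanishing on $\pi(\gamma)$ pulls back to one vanishing on $\gamma$, hence on $\gamma^*$, hence on $\pi(\gamma^*)$; thus the complex Zariski closure of $\pi(\gamma)$ already equals $V_\CC$. But the complex Zariski closure of $\pi(\gamma)$ coincides with that of its real Zariski closure $W_\RR$, which is $(W_\RR)^*$. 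With $(W_\RR)^* = V_\CC$ established, both $\deg W_\RR = \deg V_\CC$ and $\dim_\RR W_\RR = \dim_\CC V_\CC$ are immediate, and the rest of your argument goes through unchanged.
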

\subsection{Polynomial partitioning}

Theorem \ref{partitioningForVarietiesThm} will play an important role in the paper.  We recall the statement here.

\begin{thm*}[Polynomial partitioning for varieties; see \cite{G2}, Theorem 0.3]
Let $\Gamma$ be a set of varieties in $\RR^d$, each of which has degree at most $D$ and dimension at most $e$. For each $E\geq 1$, there is a non-zero polynomial $P$ of degree at most $E$, so that each connected component of $\RR^d\backslash Z(P)$ intersects $O_{d, D}(E^{e-d}|\Gamma|)$ varieties from $\Gamma$.
\end{thm*}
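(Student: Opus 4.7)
The plan is to follow the iterated polynomial ham sandwich argument from \cite{G2}, using the Barone--Basu cell-counting bound of Theorem \ref{BBThm} to translate a measure-theoretic partition into the desired bound on the number of varieties meeting each cell.

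The starting point is the polynomial ham sandwich theorem of Stone--Tukey: any $\binom{d+k}{d}-1$ finite Borel measures on $\RR^d$ can be simultaneously bisected by the zero set of some non-zero polynomial of degree at most $k$. To each $\gamma \in \Gamma$ I would assign an $e$-dimensional measure $\mu_\gamma$ supported on a compact truncation of $\gamma$ (for instance the restriction of $e$-dimensional Hausdorff measure to a large ball), normalised to total mass $1$ and spread uniformly enough on $\gamma$ that no piece of reasonable size can be too light.

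The iterated bisection then proceeds as usual. At stage $k$ we have $2^{k-1}$ open cells; within each cell we bisect the restrictions of the $|\Gamma|$ measures via a ham sandwich polynomial of degree $O(2^{(k-1)/d})$. After $K \asymp d\log_2 E$ stages, the product $P$ of all these polynomials has degree $\sum_{k\le K} O(2^{(k-1)/d}) = O(E)$, its complement in $\RR^d$ has $O_d(E^d)$ connected components by Milnor--Thom, and each component carries at most an $O(E^{-d})$-fraction of every $\mu_\gamma$, and hence of the total measure $\mu := \sum_\gamma \mu_\gamma$ of mass $|\Gamma|$.

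The main step, and the main obstacle, is the conversion from measures to variety counts. Theorem \ref{BBThm} (applied with the ambient variety equal to $\gamma$) shows that any $\gamma \in \Gamma$ is cut by $Z(P)$ into at most $O_{d,D}(E^e)$ connected components, each inside a single cell. Provided $\mu_\gamma$ has been chosen so that every such piece carries mass at least $1/O_{d,D}(E^e)$, the number of varieties from $\Gamma$ meeting a given cell $\Omega$ is bounded by $O_{d,D}(E^e) \cdot \mu(\Omega) = O_{d,D}(E^{e-d}|\Gamma|)$, which is the claim. The subtle point is the piecewise lower bound on $\mu_\gamma$, which requires quantitative control of the intrinsic geometry of $\gamma \setminus Z(P)$ in terms of $D$, $d$, and $E$ before $P$ is fixed---control that itself rests on Barone--Basu-type estimates applied to $\gamma$ and to generic hypersurfaces of degree up to $E$.
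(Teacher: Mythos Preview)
The paper does not give a proof of this statement; it is quoted from \cite{G2} as an input.  So there is no in-paper argument to compare against, but your sketch diverges from the argument in \cite{G2} and contains a decisive error.

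You assert that after the iteration each cell carries an $O(E^{-d})$-fraction of \emph{every} $\mu_\gamma$, with $\deg P=O(E)$.  This is impossible once $e<d$ and $E$ exceeds a constant depending only on $d,D$.  If $\mu_\gamma(\Omega)\le CE^{-d}$ for every cell $\Omega$ and $\mu_\gamma$ has total mass $1$ (and $\gamma\not\subset Z(P)$, so $\mu_\gamma(Z(P))=0$), then at least $C^{-1}E^{d}$ cells carry positive $\mu_\gamma$-mass and hence are met by $\gamma$; but Theorem~\ref{BBThm} forces $\gamma$ to meet only $O_{d,D}(E^{e})$ cells.  The source of the contradiction is your degree count: bisecting $|\Gamma|$ restricted measures in each of $2^{k-1}$ cells requires degree of order $(2^{k-1}|\Gamma|)^{1/d}$, not $2^{(k-1)/d}$, so the product polynomial has degree $\sim|\Gamma|^{1/d}E$ rather than $E$.

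If you retreat to bisecting only the aggregate measure $\mu=\sum_\gamma\mu_\gamma$, the degree count is repaired, but the conversion step then collapses exactly where you call it ``subtle.''  There is no way to choose $\mu_\gamma$ in advance so that every component of $\gamma\setminus Z(P)$ has mass $\gtrsim E^{-e}$ for the yet-unknown $P$: already a degree-$2$ polynomial can isolate a piece of $\gamma$ of arbitrarily small $\mu_\gamma$-mass, and the ham-sandwich output is certainly not generic with respect to the very data it was constructed from.  Invoking ``generic hypersurfaces of degree up to $E$'' does not break this circularity.

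The argument in \cite{G2} avoids both problems by never fixing the bisected data in advance.  At stage $k$ one selects, for each current cell $O_i$ and each $\gamma$ meeting $O_i$, a single point of $\gamma\cap O_i$, and applies the polynomial ham sandwich theorem to these $2^{k-1}$ finite point sets; this keeps the stage-$k$ degree at $O(2^{(k-1)/d})$ independently of $|\Gamma|$.  A variety can meet both sign-halves of a cell, creating overcount, but the total excess introduced over all stages telescopes to the growth in $\sum_\gamma(\text{number of cells meeting }\gamma)$, which Theorem~\ref{BBThm} caps at $O_{d,D}(E^{e})|\Gamma|$.  Combining this with the halving from the point bisections yields the stated bound.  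The fresh re-sampling of points at every stage is precisely the device that replaces the piecewise lower bound you could not obtain.
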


Combining Theorem \ref{partitioningForVarietiesThm} and Proposition \ref{complexityOfProjections}, we obtain the following corollary.
\begin{cor}\label{corOfVarPartThm}
Let $\Gamma$ be a set of varieties in $\RR^d$, each of which has degree at most $D$ and dimension at most $e$. For each $E\geq 1$ and $e\leq d^\prime\leq d$, there is a non-zero polynomial $P\in\RR[x_1,\ldots,x_d]$ of the form $P(x_1,\ldots,x_d) = Q(x_1,\ldots, x_{d^\prime})$ of degree at most $E$, so that each connected component of $\RR^d\backslash Z(P)$ intersects $O_{d,D}(E^{e-d^\prime}|\Gamma|)$ varieties from $\Gamma$.
\end{cor}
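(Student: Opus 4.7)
The plan is to reduce to Theorem \ref{partitioningForVarietiesThm} applied in the lower-dimensional space $\RR^{d'}$. First I would let $\pi\colon\RR^d\to\RR^{d'}$ be the projection onto the first $d'$ coordinates, and, for each $\gamma\in\Gamma$, let $\bar\gamma$ denote the Zariski closure of $\pi(\gamma)$. By Proposition \ref{complexityOfProjections}, each $\bar\gamma$ is a variety in $\RR^{d'}$ of degree $O_{d,D}(1)$ and dimension at most $\dim(\gamma)\le e$. Let $\bar\Gamma=\{\bar\gamma:\gamma\in\Gamma\}$ (as a multiset, to keep the cardinality bound clean).

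Next I would apply Theorem \ref{partitioningForVarietiesThm} to $\bar\Gamma$ in $\RR^{d'}$, with the same parameter $E$. Since each variety in $\bar\Gamma$ has dimension at most $e\le d'$ and degree $O_{d,D}(1)$, this yields a nonzero polynomial $Q\in\RR[x_1,\ldots,x_{d'}]$ of degree at most $E$ so that each connected component of $\RR^{d'}\setminus Z(Q)$ meets at most $O_{d,D}(E^{e-d'}|\bar\Gamma|)=O_{d,D}(E^{e-d'}|\Gamma|)$ varieties of $\bar\Gamma$. Define $P(x_1,\ldots,x_d):=Q(x_1,\ldots,x_{d'})$; this is the candidate partitioning polynomial.

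It remains to identify the connected components of $\RR^d\setminus Z(P)$ and to transfer the incidence count back to $\Gamma$. Since $Z(P)=Z(Q)\times\RR^{d-d'}$ and $\RR^{d-d'}$ is connected, the connected components of $\RR^d\setminus Z(P)$ are exactly the sets $\Omega\times\RR^{d-d'}$ as $\Omega$ ranges over the connected components of $\RR^{d'}\setminus Z(Q)$. Now if some $\gamma\in\Gamma$ meets $\Omega\times\RR^{d-d'}$, then choosing $p\in\gamma\cap(\Omega\times\RR^{d-d'})$ gives $\pi(p)\in\pi(\gamma)\cap\Omega\subset\bar\gamma\cap\Omega$, so $\bar\gamma$ meets $\Omega$. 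Therefore the number of $\gamma\in\Gamma$ meeting the cell $\Omega\times\RR^{d-d'}$ is at most the number of $\bar\gamma\in\bar\Gamma$ meeting $\Omega$, which is $O_{d,D}(E^{e-d'}|\Gamma|)$ by the previous step.

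I do not expect any step here to be a serious obstacle; the only mild subtlety is remembering to replace $\pi(\gamma)$ by its Zariski closure before invoking Theorem \ref{partitioningForVarietiesThm} (so that the hypotheses of that theorem apply), and then checking that intersection with cells is preserved under this enlargement — which is immediate because $\pi(\gamma)\subset\bar\gamma$. One should also note that $P$ is nonzero because $Q$ is, and $\deg P=\deg Q\le E$, confirming that $P$ has the claimed form and degree.
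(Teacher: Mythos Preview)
Your argument is correct and follows exactly the route the paper intends: the paper simply states that the corollary is obtained by combining Theorem~\ref{partitioningForVarietiesThm} with Proposition~\ref{complexityOfProjections}, and your proof carries out precisely this combination---project to $\RR^{d'}$, take Zariski closures to get bounded-degree varieties there, partition in $\RR^{d'}$, and pull the cells back as cylinders. Your care with the multiset convention and with noting $\pi(\gamma)\subset\bar\gamma$ are the right touches to make the transfer of the cell-incidence count airtight.
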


\section{Warm-up: two-rich points in three dimensions} \label{curvesinR3}
As a warm-up, we will first prove a bound on the number of two-rich points spanned by an arrangement of curves that are contained in a low degree three dimensional real variety. We will closely follow the proof from \cite{G}.
 \begin{prop}\label{generalizedGuth}
For each $d\geq 3$, $D,E\geq 1$ and $\epsilon>0$, there are constants $C$ and $C^\prime$ so that the following holds. Let $M\subset\RR^d$ be an irreducible three-dimensional variety of degree at most $E$. Let $\mathcal{L}$ be a set of irreducible curves in $M$, each of which has degree at most $D$. Then for each $\epsilon>0$ there is a set $\mathcal{S}$ of at most $n^{1/2-\epsilon}$ irreducible varieties, each of dimension at most two, so that each $S\in\mathcal{S}$ has degree at most $C^\prime$ and satisfies $|\lines_S| \geq n^{1/2+\epsilon}$. Furthermore, there are few two-rich points not covered by the surfaces in $\mathcal{S}$. More precisely, we have the estimate
\begin{equation}\label{threeDimTwoRichPtsBd}
\Big|\pts_2( \lines )\ \backslash\ \bigcup_{S\in\mathcal{S}}\pts_2(\lines_S)\Big| \leq Cn^{3/2+\epsilon}.
\end{equation}
\end{prop}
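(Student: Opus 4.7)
The plan is to argue by strong induction on $n$, closely following the polynomial-partitioning framework of \cite{G} but adapted so that the ambient $\RR^3$ is replaced by the three-dimensional variety $M\subset\RR^d$. Fix $\epsilon>0$ and $D$, pick a large partitioning degree $E$ depending on $\epsilon,D,d$, set $C^\prime$ a suitable multiple of $E$, and take $C$ large in terms of $E$. The base case where $n$ lies below a threshold depending only on $E$ is trivial with $\mathcal{S}=\emptyset$. For the inductive step, apply Theorem \ref{partitioningForVarietiesThm} to $\curves$ to produce a nonzero polynomial $P$ of degree at most $E$; Theorem \ref{BBThm} applied to $M$ gives $O_{d,D}(E^3)$ connected cells in $M\setminus Z(P)$, each meeting $O_{d,D}(E^{-2}n)$ curves from $\curves$ (using that curves are effectively one-dimensional inside the three-dimensional variety $M$).

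Partition $\pts_2(\curves)$ into the cellular two-rich points lying in some cell of $M\setminus Z(P)$ and the boundary ones on $Z(P)\cap M$. For each cell $\Omega$ set $\curves_\Omega:=\{\gamma\in\curves:\gamma\cap\Omega\neq\emptyset\}$ and apply the inductive hypothesis to obtain a surface collection $\mathcal{S}_\Omega$. Summing and using $\sum_\Omega|\curves_\Omega|\lesssim_D En$ (each curve enters $O_D(E)$ cells by Bezout), the cellular two-rich points not covered by $\bigcup_\Omega\mathcal{S}_\Omega$ number at most $O(E^3)\cdot C(E^{-2}n)^{3/2+\epsilon}\leq\tfrac{C}{2}n^{3/2+\epsilon}$ once $E$ is large enough. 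For boundary points, those where at least one of the two curves crosses $Z(P)$ transversally contribute only $O_D(En)$ by Bezout; the remaining boundary two-rich points lie on two curves both contained in the same irreducible component of $Z(P)\cap M$, and each such component is an irreducible variety of dimension at most two and degree $O_{d,D}(E)\leq C^\prime$, with only $O_{d,D}(E)$ components in total.

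Let $\mathcal{S}_1$ be the union of the two-dimensional irreducible components of $Z(P)\cap M$ with $\bigcup_\Omega\mathcal{S}_\Omega$. By construction $\mathcal{S}_1$ already satisfies the covering estimate \eqref{threeDimTwoRichPtsBd}. Finally, prune to $\mathcal{S}:=\{S\in\mathcal{S}_1:|\curves_S|\geq n^{1/2+\epsilon}\}$. Each discarded surface contributes at most $\binom{n^{1/2+\epsilon}}{2}$ points to $\pts_2(\curves_S)$, and since the total number of discarded surfaces is polynomially controlled, this loss is absorbed into $Cn^{3/2+\epsilon}$. The bound $|\mathcal{S}|\leq n^{1/2-\epsilon}$ is obtained as in \cite{G} by double-counting: two distinct irreducible surfaces of degree at most $C^\prime$ inside the three-dimensional $M$ meet in a one-dimensional set containing $O_{C^\prime,D}(1)$ curves of $\curves$, so $\sum_{S\in\mathcal{S}}|\curves_S|\lesssim n+|\mathcal{S}|^2\lesssim n$ after a greedy reassignment, giving $|\mathcal{S}|\cdot n^{1/2+\epsilon}\lesssim n$.

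The main obstacle I anticipate is the pruning step, where one must track that the constants $C$ and $C^\prime$ do not inflate through the induction while simultaneously enforcing $|\mathcal{S}|\leq n^{1/2-\epsilon}$ and the two-rich covering bound. Once $E$ is chosen large (depending on $\epsilon,D,d$) so that the cellular contribution absorbs into $\tfrac{C}{2}n^{3/2+\epsilon}$ and $C$ is then fixed large enough to cover the boundary and discarded-surface contributions, the induction closes.
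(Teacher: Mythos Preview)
Your outline follows the paper's argument almost exactly: induction on $n$, polynomial partitioning of degree $E_1=O_{D,\epsilon}(1)$, apply the inductive hypothesis cell-by-cell, throw the irreducible components of $Z(P)\cap M$ into the surface collection, then prune out surfaces with fewer than $\sim n^{1/2+\epsilon}$ curves. There is, however, one genuine gap and one sloppy step.

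\textbf{The gap: proper intersection of $Z(P)$ with $M$.} You invoke Theorem~\ref{partitioningForVarietiesThm} for the curves $\curves\subset\RR^d$ and then assert two things: that each cell of $M\setminus Z(P)$ meets $O(E^{-2}n)$ curves, and that every irreducible component of $Z(P)\cap M$ has dimension at most two. Neither follows from Theorem~\ref{partitioningForVarietiesThm} as stated. That theorem gives $O(E^{1-d}n)$ curves per cell of $\RR^d\setminus Z(P)$, not $O(E^{-2}n)$; and nothing prevents $P$ from vanishing identically on $M$, in which case $Z(P)\cap M=M$ is three-dimensional and your boundary step collapses. Your parenthetical ``curves are effectively one-dimensional inside the three-dimensional variety $M$'' is exactly the point that needs work. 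The paper handles this by first rotating so that $M$ meets every cylinder $\{Q(x_1,x_2,x_3)=0\}$ properly, and then using Corollary~\ref{corOfVarPartThm} with $d'=3$ to produce a partitioning polynomial of the form $P(x_1,\ldots,x_d)=Q(x_1,x_2,x_3)$. This simultaneously yields the $O(E^{-2}n)$ per-cell bound, gives $O(E^3)$ cells, and forces $M\cap Z(P)$ to be a proper (hence $\le 2$-dimensional) intersection of degree $O_{d,D,E,\epsilon}(1)$.

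\textbf{The circularity in the pruning step.} Your inequality $\sum_{S\in\mathcal{S}}|\curves_S|\lesssim n+|\mathcal{S}|^2$ is correct, but concluding that this is $\lesssim n$ already presupposes $|\mathcal{S}|\lesssim n^{1/2}$, which is what you are proving. The paper breaks the circularity via Lemma~\ref{unionsOfSurfacesLem}: one first uses the \emph{a priori} bound $|\mathcal{S}_2|\lesssim_{D,\epsilon} n^{1/2-\epsilon}$ (coming from summing the inductive bounds $|\mathcal{S}_\Omega|\le|\curves_\Omega|^{1/2-\epsilon}$ over $O(E^3)$ cells) to verify that every surviving surface contains $\ge 2n^{1/2+\epsilon}\ge C_1|\mathcal{S}|$ curves, and then Lemma~\ref{unionsOfSurfacesLem} gives $\sum_S|\curves_S|\le 2n$, hence $|\mathcal{S}|\le n^{1/2-\epsilon}$ with no implied constant. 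Note the paper prunes at threshold $2n^{1/2+\epsilon}$ rather than $n^{1/2+\epsilon}$ precisely so this last inequality comes out with constant $1$.
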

\begin{proof}

We will prove the result by induction on $n$; the case when $n$ is small is trivial, provided we select $C$ sufficiently large.  After a rotation, we can assume that $M$ intersects every affine hypersurface of the form $P(x_1,x_2,x_3)=0$ properly (e.g. the intersection has dimension strictly smaller than three). Use Corollary \ref{corOfVarPartThm}  to select a polynomial $P\in\RR[x_1,\ldots,x_d]$ of the form $P(x_1,\ldots,x_d)=P(x_1,x_2,x_3)$ of degree $E_1$ so that each connected component of $\RR^d\backslash Z(P)$ intersects $O_{D,d}(nE_1^{-2})$ curves from $\mathcal{L}$. We will select the parameter $E_1$ later. 

For each cell $\Omega$ of this partition, let $\mathcal{L}_{\Omega}$ be the set of curves from $\mathcal{L}$ that meet $\Omega$. Apply the induction hypothesis to each set $\mathcal{L}_{\Omega}$; let $\mathcal{S}_\Omega$ be the resulting set of irreducible surfaces. Let $\mathcal{S}_1= \bigcup_{\Omega}\mathcal{S}_\Omega$. Recall that $M\cap Z(P)$ is a proper intersection, and thus $M\cap Z(P)$ is an algebraic variety of dimension at most two and degree $O_{d,E_1,\epsilon}(1)$. Note that $|\mathcal{S}_{\Omega}|\lesssim_{d,D}(nE_1^{-2})^{1/2-\epsilon}$ for each index $\Omega$, and thus 
$$
|\mathcal{S}_1|\lesssim_{d,D} E_1^{2}n^{1/2-\epsilon}.
$$
Observe that for each cell $\Omega$, we have
\begin{equation*}
\begin{split}
\Big| \pts_2(\mathcal{L}_\Omega)\ \backslash \bigcup_{S\in\mathcal{S}_\Omega}\pts_2(\mathcal{L}_S)\Big|&\leq C|\mathcal{L}_\Omega|^{3/2+\epsilon}\\
&\lesssim_{d,D} C(nE_1^{-2})^{3/2+\epsilon},
\end{split}
\end{equation*}
and thus if we define $S_0=Z(P)\cap M$, then
\begin{equation}\label{boundOnPts2ThreeDim}
 \begin{split}
  \Big|\pts_2(\mathcal{L})\ \big\backslash\ \Big(\pts_2(\mathcal{L}_{S_0})\cup\bigcup_{S\in\mathcal{S}_1}\pts_2(\mathcal{L}_S)\Big) \Big|&\leq \sum_\Omega \Big| \pts_2(\mathcal{L}_\Omega)\ \backslash \bigcup_{S\in\mathcal{S}_\Omega}\pts_2(\mathcal{L}_S)\Big|\\
  &\lesssim_{d,D} C E_1^{-2\epsilon}n^{3/2+\epsilon}.
 \end{split}
\end{equation}
If we choose $E_1=O_{d,D,\epsilon}(1)$ sufficiently large, then $\eqref{boundOnPts2ThreeDim}\leq \frac{C}{3}n^{3/2+\epsilon}$. Note that with such a choice of $E_1$, $Z(P)\cap M$ has degree $O_{d,D,E,\epsilon}(1)$. Thus if we choose $C^\prime$ sufficiently large (depending only on $d,D,E$ and $\epsilon$), then $S_0$ is a union of $O_{d,D,E,\epsilon}(1)$ irreducible varieties, each of dimension at most two and degree at most $C^\prime$. Let $\mathcal{S}_2$ be the union of $\mathcal{S}_1$ and the irreducible components of $S_0$. Then $|\mathcal{S}_2|\lesssim_{d,D,E,\epsilon}n^{1/2-\epsilon}$.

Define 
$$
\mathcal{S}=\{S\in\mathcal{S}_2\colon |\mathcal{L}_S|\geq 2 n^{1/2+\epsilon}\}.
$$
Since each pair of curves can intersect $O_{d,D}(1)$ times, we conclude that 
\begin{equation}\label{boundOnTwoRichDeletedSurfaces}
 \begin{split}
  \sum_{S\in\mathcal{S}_1\backslash\mathcal{S}_2}|\pts_2(\mathcal{L}_S) |&\lesssim_{d,D} |\mathcal{S}_1|( n^{1/2+\epsilon})^2\\
  &\lesssim_{d,D} E_1^2 n^{1/2-\epsilon}( n^{1/2+\epsilon})^2\\
  &\lesssim_{d,D,\epsilon} n^{3/2+\epsilon}.
 \end{split}
\end{equation}
If the constant $C=O_{d,D,E,\epsilon}(1)$ from the statement of Proposition \ref{generalizedGuth} is selected sufficiently large, then $\eqref{boundOnTwoRichDeletedSurfaces}\leq \frac{C}{3}n^{3/2+\epsilon}$. Thus \eqref{threeDimTwoRichPtsBd} holds for this choice of $\mathcal{S}$. Next, we will verify that that $|\mathcal{S}|\leq n^{1/2-\epsilon}$. To do this, we will first need the following lemma. 
\begin{lemma}[Unions of surfaces contain many lines]\label{unionsOfSurfacesLem}
For each $d,D\geq 1$, there is a constant $C_1$ so that the following holds. Let $\mathcal{L}$ be a set of curves in $\RR^d$, each of degree at most $D$. Let $\mathcal{S}$ be a set of two-dimensional surfaces in $\RR^d$, each of degree at most $D$. If each surface from $\mathcal{S}$ contains at least $C_1|\mathcal{S}|$ curves from $\mathcal{L}$, then 
\begin{equation}
\sum_{S\in\mathcal{S}} |\mathcal{L}_S|\leq 2\big|\bigcup_{S\in\mathcal{S}}\mathcal{L}_S\big|.
\end{equation}
\end{lemma}
\begin{proof}
First, note that any two surfaces from $\mathcal{S}$ can intersect in at most $O_{d,D}(1)$ irreducible curves, and thus the intersection can contain at most $O_{d,D}(1)$ curves from $\mathcal{L}$. Let $S_1,\ldots,S_t,\ t=|\mathcal{S}|$ be an enumeration of the curves from $\mathcal{S}$. By inclusion-exclusion, we have 
\begin{equation*}
 \begin{split}
\Big|\bigcup_{i=1}^{t}\mathcal{L}_{S_i}\Big| &\geq \sum_{i=1}^t \Big(|\mathcal{L}_{S_i}|-\sum_{j=1}^{i} |\mathcal{L}_{S_i} \cap \mathcal{L}_{S_j}|\Big)\\
&\geq \sum_{i=1}^t \big(|\mathcal{L}_{S_i}|-O_{d,D}(1)|\mathcal{S}|\big)\\
&\geq\frac{1}{2}\sum_{i=1}^t |\mathcal{L}_{S_i}|,
\end{split}
\end{equation*}
provided $C_1=O_{d,D}(1)$ is chosen sufficiently large.
\end{proof}

Apply Lemma \ref{unionsOfSurfacesLem} to $\mathcal{S}$; each surface $S\in\mathcal{S}_2$ contains at least $2n^{1/2+\epsilon}$ curves from $\mathcal{L}$, and this is larger than $C_1|\mathcal{S}|$ provided $n$ is sufficiently large compared to $d,D,$ and $\epsilon$. We conclude that $|\mathcal{S}|\leq n^{1/2-\epsilon}.$ This completes the proof of Proposition \ref{generalizedGuth}.
\end{proof}

We note the following corollary of Lemma \ref{unionsOfSurfacesLem}.
\begin{cor}\label{corUnionsOfSurfacesLem}
 Let $\mathcal{L}$ be a set of curves in $\RR^d$, each of degree at most $D$. Let $\mathcal{S}$ be a set of two-dimensional surfaces in $\RR^d$, each of degree at most $D$. Suppose that each surface from $\mathcal{S}$ contains at least $A$ curves from $\mathcal{L}$. Then
\begin{equation}
 \big|\bigcup_{S\in\mathcal{S}}\mathcal{L}_S\big|\gtrsim_{d,D}\min\big(A^2,\ A|\mathcal{S}|\big).
\end{equation}

\end{cor}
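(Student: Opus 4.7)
The plan is to deduce the corollary from Lemma \ref{unionsOfSurfacesLem} by a simple case split depending on how $A$ compares to $|\mathcal{S}|$, in one case passing to an appropriate subset of $\mathcal{S}$ so that the hypothesis of the lemma is satisfied. Let $C_1 = C_1(d,D)$ be the constant from Lemma \ref{unionsOfSurfacesLem}.

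First, suppose $A \geq C_1 |\mathcal{S}|$. Then the hypothesis of Lemma \ref{unionsOfSurfacesLem} is satisfied directly: each $S\in\mathcal{S}$ contains at least $A \geq C_1|\mathcal{S}|$ curves from $\mathcal{L}$. Applying the lemma and using $|\mathcal{L}_S|\geq A$ for each $S$, we obtain
\[
\Big|\bigcup_{S\in\mathcal{S}}\mathcal{L}_S\Big| \;\geq\; \tfrac{1}{2}\sum_{S\in\mathcal{S}}|\mathcal{L}_S| \;\geq\; \tfrac{1}{2}A|\mathcal{S}|,
\]
which handles the case where $\min(A^2,A|\mathcal{S}|) = A|\mathcal{S}|$.

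Next, suppose $A < C_1|\mathcal{S}|$, so that $\min(A^2,A|\mathcal{S}|) = A^2$. If $A \leq C_1$, then selecting any single surface $S \in \mathcal{S}$ already yields $|\bigcup_{S\in\mathcal{S}}\mathcal{L}_S| \geq A \geq A^2/C_1$, so we are done. Otherwise, set $t := \lfloor A/C_1 \rfloor \geq 1$, and note $t \leq |\mathcal{S}|$ by the case assumption. Choose any subset $\mathcal{S}' \subset \mathcal{S}$ of cardinality $t$. Then each $S \in \mathcal{S}'$ contains at least $A \geq C_1 t = C_1|\mathcal{S}'|$ curves of $\mathcal{L}$, so Lemma \ref{unionsOfSurfacesLem} applies to $\mathcal{S}'$ and gives
\[
\Big|\bigcup_{S\in\mathcal{S}}\mathcal{L}_S\Big| \;\geq\; \Big|\bigcup_{S\in\mathcal{S}'}\mathcal{L}_S\Big| \;\geq\; \tfrac{1}{2}\sum_{S\in\mathcal{S}'}|\mathcal{L}_S| \;\geq\; \tfrac{1}{2}A t \;\gtrsim_{d,D}\; A^2.
\]

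Combining the two cases yields the desired lower bound $|\bigcup_{S\in\mathcal{S}}\mathcal{L}_S| \gtrsim_{d,D} \min(A^2,A|\mathcal{S}|)$. There is no real obstacle here; the only mild subtlety is ensuring that the chosen subset $\mathcal{S}'$ is nonempty and that its cardinality is at most $A/C_1$ so that the hypothesis of Lemma \ref{unionsOfSurfacesLem} is actually met, which is handled by the small-$A$ trivial case.
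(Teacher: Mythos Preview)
Your argument is correct and is precisely the natural deduction the paper has in mind; the paper states the corollary without proof, and the case split on whether $A \ge C_1|\mathcal{S}|$ (applying Lemma~\ref{unionsOfSurfacesLem} directly or to a subfamily of size $\lfloor A/C_1\rfloor$) is the expected way to fill it in. One cosmetic remark: in the second case the assertion ``so that $\min(A^2,A|\mathcal{S}|)=A^2$'' is not literally implied by $A<C_1|\mathcal{S}|$ when $|\mathcal{S}|<A<C_1|\mathcal{S}|$, but this does not affect the argument since you actually prove $|\bigcup_{S}\mathcal{L}_S|\gtrsim_{d,D}A^2\ge\min(A^2,A|\mathcal{S}|)$.
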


\section{Incidence bounds coming from $\CC^3$}\label{bipartiteGZ}

In this section, we prove incidence bounds on two-dimensional surfaces in $\RR^4$ that are based on incidence bounds for curves in $\CC^3$.  The key input is the following bound on the number of two-rich points spanned by a collection of curves in $\CC^3$. This is Lemma 12.2 from \cite{GZ}.  (It is a small variation of the main theorem of \cite{GZ}, Theorem 1.2.)
\begin{prop}\label{GZProp}
Let $D>0$. Then there are constants $C_1',C_2'$ so that the following holds. Let $\mathcal{L}$ be an arrangement of irreducible curves in $\CC^3$, each of degree at most $D$. Then for each number $A>C_1' n^{1/2}$, at least one of the following two things must occur
\begin{itemize}
 \item There is a curve $\gamma \in \mathcal{L}$ that contains at most $C_2' A$ two-rich points of $\mathcal{L}$.
 \item There is an irreducible surface $Z\subset \CC^3$ of degree at most $100 D^2$ that contains at least $A$ curves from ${\mathcal{L}}$. 
\end{itemize}
\end{prop}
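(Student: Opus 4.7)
The plan is to deduce Proposition \ref{GZProp} from the main two-rich point theorem for curves in $\CC^3$ (\cite{GZ}, Theorem 1.2) by a simple averaging / contrapositive argument. The hypotheses and conclusion are naturally in minimum-degree form: assuming that no irreducible surface of degree at most $100D^2$ contains $\ge A$ curves of $\mathcal{L}$, we want to find a single curve $\gamma \in \mathcal{L}$ that contains only $O(A)$ two-rich points.

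The core input I would use is an incidence-style bound of the form
\begin{equation}\label{incidenceBd}
\sum_{p \in \pts_2(\mathcal{L})} r(p) \,\le\, C_3 \bigl( n^{3/2} + nA \bigr),
\end{equation}
where $r(p)$ is the number of curves of $\mathcal{L}$ through $p$, valid under the non-degeneracy hypothesis that no surface in $\CC^3$ of degree at most $100D^2$ contains $\ge A$ curves from $\mathcal{L}$. The point is that the left-hand side equals $\sum_{\gamma \in \mathcal{L}} |\gamma \cap \pts_2(\mathcal{L})|$, the total count of (curve, two-rich point) incidences. Given \eqref{incidenceBd}, the proposition follows by averaging: if every curve $\gamma \in \mathcal{L}$ contained strictly more than $C_2' A$ two-rich points, then $\sum_\gamma |\gamma \cap \pts_2(\mathcal{L})| > C_2' A n$, and combining with \eqref{incidenceBd} gives $C_2' An < C_3(n^{3/2} + nA)$. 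Choosing $C_2' = 2C_3$ and $C_1' \ge 2C_3/C_2'$, the hypothesis $A > C_1' n^{1/2}$ produces a contradiction, so some curve must carry at most $C_2' A$ two-rich points.

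The main work is therefore establishing \eqref{incidenceBd}, and this is essentially a restatement of the proof of \cite{GZ}, Theorem 1.2 with two small modifications. Theorem 1.2 gives $|\pts_2(\mathcal{L})| \lesssim n^{3/2}$ under the stronger hypothesis that no such surface contains more than $n^{1/2}$ curves. The first modification is to pass from a bound on $|\pts_2(\mathcal{L})|$ to a bound on $\sum_p r(p)$; this is done by dyadically decomposing according to the richness $r(p)$ and applying the $k$-rich point bounds from \cite{GZ} for each dyadic range of $k$, where the logarithmic loss from the dyadic sum is absorbed into the additive $nA$ term (using $A > C_1' n^{1/2}$). The second modification is to allow the surface-concentration threshold to be $A$ rather than $n^{1/2}$, which introduces the $nA$ term: up to $\sim n/A$ surfaces may carry $\sim A$ curves each, and the two-rich points internal to such a surface can contribute $\sim A^2$ incidences apiece, totalling $\sim nA$.

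The main obstacle is confirming that the polynomial-partitioning machinery in the proof of \cite{GZ}, Theorem 1.2 really does produce the split bound $n^{3/2} + nA$ cleanly when the surface threshold is raised to $A$. I expect this to be essentially automatic — the $n^{3/2}$ term comes from the cellular contribution and the $nA$ term from curves lying on low-degree surfaces — but it does require inspecting the proof in \cite{GZ} rather than invoking its theorem statement as a black box. Once \eqref{incidenceBd} is in hand, the averaging argument above is immediate and finishes the proof.
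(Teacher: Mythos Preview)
The paper does not give a proof of this proposition at all: it is simply quoted as Lemma 12.2 of \cite{GZ}, with the parenthetical remark that it is ``a small variation of the main theorem of \cite{GZ}, Theorem 1.2.''  So there is no in-paper argument to compare your proposal against.

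Your averaging/contrapositive reduction is correct once the incidence bound \eqref{incidenceBd} is available, and the overall strategy you describe --- feed the non-degeneracy hypothesis into the \cite{GZ} machinery, obtain a total incidence bound of the shape $n^{3/2}+nA$, then average over curves --- is exactly the sort of derivation the paper's parenthetical is alluding to.  That said, your own write-up concedes the substantive point: establishing \eqref{incidenceBd} ``requires inspecting the proof in \cite{GZ} rather than invoking its theorem statement as a black box.''  In other words, like the paper, you are ultimately deferring the real content to \cite{GZ}; the difference is only that you spell out the averaging wrapper explicitly.  One small caution on that wrapper: your dyadic-in-richness argument for passing from $|\pts_2(\mathcal{L})|$ to $\sum_p r(p)$ would naively pick up a $\log n$ factor on the $n^{3/2}$ term, and ``absorbing the logarithmic loss into the additive $nA$ term'' does not obviously work since $A$ may be as small as $C_1' n^{1/2}$.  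The clean way is to get the weighted bound $\sum_p r(p)\lesssim n^{3/2}+nA$ directly from the \cite{GZ} argument rather than by post-processing the $k$-rich point estimates, which is presumably how Lemma 12.2 is actually stated and proved there.
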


Here is our main result on surfaces in $\RR^4$.  

\begin{prop}[Two-rich curves for surfaces in $\RR^4$]\label{GZForSurfaces}
For each $D\geq 1$, there are constants $C_1,C_2$ so that the following holds. Let $\mathcal{S}$ be a collection of $n$ irreducible two-dimensional surfaces in $\RR^4$, each of degree at most $D$, and let $A\geq C_1 n^{1/2}$. Then there exists a set $\mathcal{M}$ of at most $n/A$ three-dimensional varieties, each of degree at most $100D^2$, such that each variety contains $\geq A$ surfaces from $\mathcal{S}$. Furthermore,
\begin{equation*}
\sum_{\gamma\in\mathcal{C}_2(\mathcal{S})}|\{S\in \mathcal{S}^\prime\colon \gamma\subset S \}|\leq C_2An,
\end{equation*}
where $\mathcal{C}_2(\mathcal{S})$ is the set of irreducible one-dimensional curves contained in two or more surfaces from $\mathcal{S}$, and 
\begin{equation*}
\mathcal{S}^\prime=\mathcal{S}\ \backslash\bigcup_{M\in \mathcal{M}}\{S\in\mathcal{S}\colon S\subset M\}.
\end{equation*}
\end{prop}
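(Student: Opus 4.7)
The plan is to reduce the statement to a two-rich point question about curves in $\CC^3$, then iterate Proposition \ref{GZProp}.

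\textbf{Complexification and slicing.} For each $S \in \mathcal{S}$, let $S^* \subset \CC^4$ denote its complex Zariski closure, an irreducible complex two-dimensional variety of degree at most $D$, and similarly complexify each $\gamma \in \mathcal{C}_2(\mathcal{S})$ to a complex curve $\gamma^*$. Choose a generic complex hyperplane $H \subset \CC^4$, which we identify with $\CC^3$. By genericity, each $L_S := S^* \cap H$ is an irreducible complex algebraic curve in $H$ of degree at most $D$, the curves $\mathcal{L}_H := \{L_S : S \in \mathcal{S}\}$ are distinct, and for every $\gamma \in \mathcal{C}_2(\mathcal{S})$ the intersection $\gamma^* \cap H$ consists of $\deg(\gamma)$ distinct points, each of which is a two-rich point of $\mathcal{L}_H$ (since at least two surfaces $S^*$ contain $\gamma^*$ and hence their slices contain $\gamma^* \cap H$).

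\textbf{Reduction to a $\CC^3$ incidence count.} For every $\gamma \in \mathcal{C}_2(\mathcal{S})$ and every $S \in \mathcal{S}'$ with $\gamma \subset S$, each of the $\deg(\gamma) \geq 1$ points of $\gamma^* \cap H$ contributes a point--curve incidence $(p, L_S)$ with $p \in \mathcal{P}_2(\mathcal{L}_H)$ and $L_S \in \mathcal{L}_H' := \{L_S : S \in \mathcal{S}'\}$. Hence
\[
\sum_{\gamma \in \mathcal{C}_2(\mathcal{S})} |\{S \in \mathcal{S}' : \gamma \subset S\}| \ \leq\ I\big(\mathcal{P}_2(\mathcal{L}_H),\, \mathcal{L}_H'\big),
\]
and it suffices to bound this complex incidence count by $C_2 An$.

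\textbf{Iteration.} Process $\mathcal{L}_H$ greedily. Starting with $\mathcal{L}^{(0)} := \mathcal{L}_H$, at step $k$ with $n_k := |\mathcal{L}^{(k)}|$: if $n_k < C_1' A$ we terminate (the residual contributes $O_D(A^2)$ incidences by B\'ezout, absorbed into $C_2 An$ provided $A \leq n$); otherwise Proposition \ref{GZProp} produces either (a) a curve $L \in \mathcal{L}^{(k)}$ with at most $C_2' A$ two-rich points of $\mathcal{L}^{(k)}$, which we delete while charging $\leq C_2' A$ incidences; or (b) an irreducible surface $Z \subset H$ of degree at most $100D^2$ containing at least $A$ curves of $\mathcal{L}^{(k)}$, in which case we build a three-dimensional variety $M \subset \CC^4$ of degree at most $100D^2$ containing the corresponding surfaces $S^*$, add $M$ to $\mathcal{M}$, and remove those curves from $\mathcal{L}^{(k)}$. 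Case (a) occurs at most $n$ times, contributing $\leq C_2' An$ incidences, while case (b) occurs at most $n/A$ times, yielding $|\mathcal{M}| \leq n/A$. (A small amount of bookkeeping is needed because two-richness in $\mathcal{L}^{(k)}$ and in $\mathcal{L}_H$ differ when intermediate case-(b) curves have been removed; one handles this by running case (b) first whenever available, so that any two-rich point lost to a case-(b) deletion was already covered by absorbing its surfaces into an $M$.)

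\textbf{Main obstacle.} The heart of the argument is step (b): given $Z \subset H$ of degree $\leq 100D^2$ together with $\geq A$ complex surfaces $S^*$ whose slices $L_S$ lie in $Z$, produce a three-dimensional variety $M \subset \CC^4$ of degree $\leq 100D^2$ containing a positive fraction of those $S^*$. The natural candidate is the cylinder $\tilde Z := \pi^{-1}(Z)$ over $Z$, where $\pi\colon \CC^4 \to H$ is a generic linear projection; this variety has dimension three, degree $\leq 100D^2$, and satisfies $\tilde Z \cap H = Z$. However, $L_S \subset Z$ does not by itself force $S^* \subset \tilde Z$: it only forces $S^* \cap \tilde Z$ to contain the curve $L_S$, leaving open the possibility that $S^* \cap \tilde Z$ is a strictly lower-dimensional subvariety of $S^*$. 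Overcoming this requires a B\'ezout/dimension argument combined with genericity averaging over the direction of $\pi$ (equivalently, over the choice of hyperplane $H$) to conclude that, for a positive fraction of the $S^*$'s with slices in $Z$, one actually has $S^* \subset \tilde Z$ for some such cylinder. This lifting step is where the hypothesis that we work over $\CC$ is essential, since only then does a generic hyperplane meet every two-rich curve of $\mathcal{S}^*$, and it is the main new technical contribution of the section.
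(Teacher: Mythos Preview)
Your reduction to a $\CC^3$ incidence problem via generic hyperplane slicing is exactly the right instinct, and your iteration framework is close in spirit to the paper's. However, there is a genuine gap at the step you yourself flag as the ``main obstacle'': lifting the surface $Z\subset H$ to a three-dimensional variety $M\subset\CC^4$ of degree $\le 100D^2$ that contains the corresponding $S^*$'s. The cylinder $\pi^{-1}(Z)$ does \emph{not} in general contain $S^*$ just because $L_S\subset Z$, and no amount of averaging over projection directions repairs this: for a fixed $S^*$, there is simply no reason the slices $S^*\cap H$ should trace out a bounded-degree hypersurface as $H$ varies. Your proposal effectively acknowledges that this step is unresolved.

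The paper's argument avoids the lifting problem altogether by reversing the order of operations. Instead of slicing first and then trying to lift $Z$ back to $\CC^4$, it first finds the three-dimensional variety in $\RR^4$ directly, using \emph{degree reduction} (Lemma~\ref{degreeReductionLem}): under the hypothesis that every surface carries many two-rich curves, all of $\mathcal{S}$ lies in $Z(P)$ for some $P$ of degree $\le n/(2A)$. Factoring $P=P_1\cdots P_\ell$ and pigeonholing yields an irreducible factor $P_{j_0}$ with $|\mathcal{S}_{j_0}|\ge A$. Only \emph{then} does one slice with a generic hyperplane $H$ and apply Proposition~\ref{GZProp} to the curves $\{S^*\cap H: S\in\mathcal{S}_{j_0}\}$; the resulting low-degree surface $Z\subset H$ is forced, by a B\'ezout argument, to coincide with $Z_{\CC}(P_{j_0})\cap H$, which bounds $\deg P_{j_0}\le 100D^2$. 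In short: the paper uses slicing to bound the degree of a variety it has already found in $\RR^4$, whereas you attempt to construct that variety from its slice, which does not work. The missing ingredient in your argument is precisely the degree-reduction step that produces $P$.
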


Proposition \ref{GZForSurfaces} is a corollary of the following lemma.

\begin{lemma}\label{manyIncidencesOneSurface}
For each $D\geq 1$, there are constants $C_1,C_2$ so that the following holds. Let $\mathcal{S}$ be a collection of $n$ irreducible two-dimensional surfaces in $\RR^4$, each of degree at most $D$, and let $A\geq C_1 n^{1/2}$. Then at least one of the following must hold
\begin{itemize}
 \item[(A)] There is a three-dimensional variety $M\subset\RR^4$ of degree at most $100D^2$ which contains at least $A$ surfaces from $\mathcal{S}$.
 \item[(B)] The surfaces in $\mathcal{S}$ determine few rich curves. More precisely,
 \begin{equation*}
\sum_{\gamma\in\mathcal{C}_2(\mathcal{S})}|\{S\in \mathcal{S}\colon \gamma\subset S \}|\leq C_2An.
\end{equation*}
\end{itemize}
\end{lemma}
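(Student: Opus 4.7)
The plan is to reduce the question about two-rich curves of surfaces in $\RR^4$ to a question about two-rich points of curves in $\CC^3$, to which Proposition \ref{GZProp} applies. Complexify each $S \in \mathcal{S}$ to $S^* \subset \CC^4$ and pick a generic complex hyperplane $H \subset \CC^4$. For generic $H$, each slice $\gamma_S := S^* \cap H$ is an irreducible curve of degree at most $D$ in $H \cong \CC^3$, the slices are pairwise distinct, and every irreducible curve $\gamma \in \mathcal{C}_2(\mathcal{S})$ meets $H$ in $\deg(\gamma) \geq 1$ points. Every such intersection point lies on $\gamma_S$ for each $S$ containing $\gamma$, so it is a $k_\gamma$-rich point of $\curves_H := \{\gamma_S : S \in \mathcal{S}\}$, where $k_\gamma := |\{S \in \mathcal{S}: \gamma \subset S\}|$. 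Thus the quantity $T := \sum_{\gamma \in \mathcal{C}_2(\mathcal{S})} k_\gamma$ we need to bound is controlled by the incidence geometry of $\curves_H$ in $\CC^3$.

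I then iterate Proposition \ref{GZProp}. Setting $\mathcal{S}_0 := \mathcal{S}$, at step $i$ I apply Proposition \ref{GZProp} to $\{\gamma_S : S \in \mathcal{S}_i\}$ with parameter $A$; the hypothesis $A > C_1' |\mathcal{S}_i|^{1/2}$ holds provided $C_1$ is chosen larger than $C_1'$. In case (i), some $\gamma_{S_i}$ carries at most $C_2' A$ two-rich points of the current arrangement; I remove $S_i$ and continue. In case (ii), an irreducible surface $M' \subset H$ of degree at most $100D^2$ contains at least $A$ of the current slices, and I exit the iteration by concluding option (A). To justify the exit, two observations are needed. First, for generic $H$ the moduli space of degree $\leq 100D^2$ surfaces in $H$ has dimension $O_D(1)$, which is dominated by $A \gtrsim n^{1/2}$; consequently (for $n$ exceeding a constant depending on $D$) no degree $\leq 100D^2$ surface in $H$ can contain $\geq A$ of the slices $\gamma_S$ unless all such slices come from a single three-dimensional complex variety $M \subset \CC^4$ of degree $\leq 100D^2$ with $M \cap H = M'$, and the $A$ surfaces themselves then lie in $M$. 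Second, $M$ must be defined over $\RR$: its complex conjugate $\bar M$ also contains the $A$ real surfaces $S^*$, and if $M \neq \bar M$ then $M \cap \bar M$ is a proper subvariety of $M$ of dimension at most $2$ and degree at most $(100D^2)^2$, which cannot house $\geq A$ distinct irreducible $2$-dimensional components once $A$ exceeds $(100D^2)^2$.

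If case (ii) never occurs, the iteration terminates after at most $n$ removals and I bound $T$ by a charging argument. Charge each flag $(\gamma, S)$ with $\gamma \in \mathcal{C}_2(\mathcal{S})$ and $\gamma \subset S$ to the step at which $S$ is removed; call the flag \emph{live} if $\gamma$ is still two-rich in $\mathcal{S}_i$ at the moment of removal, and \emph{dead} otherwise. For each $\gamma$ with original richness $k_\gamma \geq 2$, exactly $k_\gamma - 1$ flags are live and exactly one is dead, so the total number of dead flags equals $|\mathcal{C}_2(\mathcal{S})| \leq T/2$. The live flags at step $i$ are in bijection with $\{\gamma \in \mathcal{C}_2(\mathcal{S}_i) : \gamma \subset S_i\}$, and each contributes a distinct two-rich point of the current sliced arrangement on $\gamma_{S_i}$; case (i) bounds this count by $C_2' A$. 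Summing over the at most $n$ removals gives $T - |\mathcal{C}_2(\mathcal{S})| \leq C_2' A n$, and combined with $|\mathcal{C}_2(\mathcal{S})| \leq T/2$ we obtain $T \leq 2 C_2' A n$, yielding option (B) with $C_2 := 2 C_2'$. The main obstacle is executing the case-(ii) lift rigorously: quantifying the rigidity of degree $\leq 100D^2$ surfaces in $H$ in the Chow variety, verifying that for a generic $H$ the surface $M'$ genuinely arises as a hyperplane section of a fixed three-dimensional variety $M \subset \CC^4$ of the right degree, and carrying out the Galois-descent step cleanly; the iteration and charging scheme are otherwise bookkeeping.
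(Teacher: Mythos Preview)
Your iteration and charging scheme for case (i) is sound and closely parallels the paper's reduction of Lemma \ref{manyIncidencesOneSurface} to Lemma \ref{allSurfacesSameNumberTwoRichPts}: both peel off one surface at a time whose contribution is $O(A)$, and sum. The genuine gap is exactly where you flag it yourself: the case-(ii) lift from a low-degree surface $M' \subset H$ to a low-degree threefold $M \subset \CC^4$. Your dimension count does not prove this. The space of degree-$\le 100D^2$ hypersurfaces in $H$ has dimension $O_D(1)$, but the conditions ``$\gamma_{S_i} \subset M'$'' are linear constraints on $M'$ whose failure to be independent is \emph{precisely} the existence of such an $M'$; nothing in the count forces that $M'$ to arise as the hyperplane section of a global $M$ of the same degree. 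Concretely, given $A$ curves in $H$ lying on a degree-$d'$ surface $M'$, the equation of $M'$ has infinitely many degree-$d'$ extensions to $\CC^4$, none of which need vanish on the original surfaces $S_i^*$. What you would need is that for generic $H$ the degree-$\le 100D^2$ part of the ideal of $\bigcup_i S_i^*$ surjects onto that of $\bigcup_i \gamma_{S_i}$, for every $A$-element subset of $\mathcal{S}$ simultaneously; this is a statement about regularity of hyperplane sections, not a parameter count, and it is not clear it holds at this level of generality.

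The paper sidesteps the lift entirely by inserting a degree-reduction step (Lemma \ref{degreeReductionLem}) \emph{before} slicing. Under the hypothesis that every surface carries $\ge C_4 A$ two-rich curves, one first produces a real polynomial $P$ of degree $\le n/(2A)$ whose zero set already contains every $S \in \mathcal{S}$. One then pigeonholes among the irreducible factors to locate some $P_{j_0}$ with $|\mathcal{S}_{j_0}| \ge A$ and $|\mathcal{S}_{j_0}| \ge (A^2/n)(\deg P_{j_0})^2$, slices $Z_\CC(P_{j_0})$ by a generic hyperplane $H$, and applies Proposition \ref{GZProp} there. The output surface $Z \subset H$ of degree $\le 100D^2$ contains $\ge A$ slices, all of which also sit in the irreducible surface $Z_\CC(P_{j_0}) \cap H$; since $A > (\deg Z)(\deg P_{j_0})$, B\'ezout forces $Z = Z_\CC(P_{j_0}) \cap H$, and hence $\deg P_{j_0} \le 100D^2$. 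The pre-existing container $Z(P_{j_0})$ is what turns the lift into a one-line B\'ezout argument rather than a Chow-variety rigidity claim; it also delivers $M = Z(P_{j_0})$ as a genuine real hypersurface without any descent. Your scheme is missing exactly this ingredient.
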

\begin{proof}[Proof of Proposition \ref{GZForSurfaces} using Lemma \ref{manyIncidencesOneSurface}]
Let $\mathcal{M}_0=\emptyset$ and let $\mathcal{S}_0=\mathcal{S}$. For each $j=0,1,\ldots,$ apply Lemma \ref{manyIncidencesOneSurface} to $\mathcal{S}_j$ with the value of $A$ from the statement of Proposition \ref{GZForSurfaces}. If (A) holds, let $\mathcal{M}_{j+1} = \mathcal{M}_j\cup\{M\}$, where $M$ is the three dimensional variety given by Lemma \ref{manyIncidencesOneSurface}, and let $\mathcal{S}_{j+1}=\{S\in \mathcal{S}_j\colon S\not\subset M\}.$ Note that each $M\in\mathcal{M}_{j+1}$ contains at least $A$ surfaces from $\SSS$.  Also note that $|\SSS_{j+1}| \le | \SSS_j| - A$, and so this process can continue at most $n/A$ times, until (A) must fail. At this point (B) holds, and we are done. 
\end{proof}
It remains to prove Lemma \ref{manyIncidencesOneSurface}. Lemma \ref{manyIncidencesOneSurface} is a consequence of the following lemma.
\begin{lemma}\label{allSurfacesSameNumberTwoRichPts}
For each $D\geq 1$, there are constants $C_1,C_2$ so that the following holds. Let $\mathcal{S}$ be a collection of $n$ irreducible two-dimensional surfaces in $\RR^4$, each of degree at most $D$, and let $A\geq C_1 n^{1/2}$. Suppose that for each $S\in\mathcal{S}$, there are at least $C_2A$ distinct irreducible curves $\gamma\subset S$ that are incident to at least one other surface from $\mathcal{S}$. Then there is an irreducible three dimensional variety $M$ of degree at most $100D^2$ that contains $\geq A$ surfaces from $\mathcal{S}$. 
\end{lemma}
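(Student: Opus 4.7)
The plan is to complexify, slice by a generic complex hyperplane to reduce to curves in $\CC^3$, apply Proposition \ref{GZProp}, and lift the resulting surface back to $\CC^4$ via a short-exact-sequence dimension count.

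First, I would replace each $S\in\mathcal{S}$ by its complex Zariski closure $S^*\subset\CC^4$, an irreducible complex 2-dimensional variety of degree at most $D$. Choose a generic complex hyperplane $H\subset\CC^4$. By Bertini's irreducibility theorem, each slice $c_S:=S^*\cap H$ is an irreducible curve of degree at most $D$ in $H\cong\CC^3$. For each of the at least $C_2 A$ irreducible shared curves $\gamma\subset S$, the complexification $\gamma^*$ meets $H$ in at least $\deg(\gamma^*)\geq 1$ points, each of which lies on both $c_S$ and $c_{S'}$, where $S'$ is the other surface from $\mathcal{S}$ containing $\gamma$. For generic $H$ these points are distinct across different $\gamma$'s, so each $c_S$ contains at least $C_2 A$ two-rich points of the arrangement $\mathcal{L}_H:=\{c_S:S\in\mathcal{S}\}$ in $\CC^3$.

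Next I would choose constants so that Proposition \ref{GZProp} applies with parameter $\tilde A:=100D^2 A$: specifically, take $C_1$ large enough that $\tilde A\geq C_1'n^{1/2}$, and $C_2\geq 100D^2 C_2'$ so that every curve of $\mathcal{L}_H$ has more than $C_2'\tilde A$ two-rich points. Option (i) of the dichotomy is then ruled out, so option (ii) produces an irreducible complex surface $Z\subset H$ of degree at most $100D^2$ containing at least $\tilde A=100D^2 A$ of the curves $c_S$. Let $\mathcal{S}'\subset\mathcal{S}$ be the corresponding subset, so $|\mathcal{S}'|\geq 100D^2 A$.

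The central new step — and the part I expect to be the main obstacle — is lifting $Z\subset H$ to a hypersurface of $\CC^4$ without losing the degree bound. Set $X:=\bigcup_{S\in\mathcal{S}'}S^*\subset\CC^4$ and let $L$ be the linear form defining $H$. For generic $H$, no $S^*$ lies inside $H$, so $L|_X$ is a non-zero-divisor on $\CC[X]$, and $X\cap H$ is reduced (Bertini). Multiplication by $L$ then yields a short exact sequence $0\to\CC[X]\to\CC[X]\to\CC[X\cap H]\to 0$, and comparison with the analogous sequence on the ambient polynomial ring of $\CC^4$ produces the identity
\[
\dim I_d(X)=\dim I_{d-1}(X)+\dim J_d(X\cap H)\quad\text{for every }d\geq 1,
\]
where $I_d$ and $J_d$ denote the degree-$\leq d$ pieces of the vanishing ideals on $\CC^4$ and on $H$. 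The defining polynomial of $Z$ forces $\dim J_{100D^2}(X\cap H)\geq 1$, hence $\dim I_{100D^2}(X)\geq 1$. So there exists a nonzero polynomial $P$ of degree at most $100D^2$ on $\CC^4$ vanishing on $X$; since $X$ is Galois-invariant, the ideal $I_{100D^2}(X)$ is defined over $\RR$ and $P$ may be chosen with real coefficients. Finally, the real hypersurface $Z(P)\subset\RR^4$ contains every surface in $\mathcal{S}'$, has degree at most $100D^2$, and its irreducible components number at most $\deg P\leq 100D^2$, so pigeonhole produces a single irreducible component $M\subset\RR^4$ of degree at most $100D^2$ containing at least $|\mathcal{S}'|/(100D^2)\geq A$ surfaces from $\mathcal{S}$, giving the lemma. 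The hardest part is choosing $H$ generically enough that $L|_X$ is a non-zero-divisor and $X\cap H$ is reduced, so that the short exact sequence is valid and the dimension count closes precisely at $d=100D^2$.
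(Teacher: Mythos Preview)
Your approach is genuinely different from the paper's, and the central lifting step contains a real gap.

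\textbf{Comparison of strategies.} The paper first invokes the degree-reduction lemma (Lemma~\ref{degreeReductionLem}) to produce a polynomial $P$ of degree at most $n/(2A)$ whose zero set already contains every surface of $\mathcal{S}$; only \emph{afterwards} does it slice by a generic complex hyperplane, apply Proposition~\ref{GZProp} inside a single irreducible factor $Z(P_{j_0})$, and use a B\'ezout count to force $\deg P_{j_0}\le 100D^2$. Because the containing hypersurface is found before slicing, no lifting from $H$ back to $\CC^4$ is ever needed. You reverse the order: slice first, apply Proposition~\ref{GZProp}, then try to lift the resulting surface $Z\subset H$ to a degree-$100D^2$ hypersurface of $\CC^4$ via the exact sequence.

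\textbf{The gap.} The identity $\dim I_d(X)=\dim I_{d-1}(X)+\dim J_d(X\cap H)$ does \emph{not} follow from the two hypotheses you isolate. Writing $\bar J:=(I(X)+(L))/(L)$ for the image of $I(X)$ in $\CC[H]$, the short exact sequence $0\to\CC[X]\xrightarrow{L}\CC[X]\to\CC[X\cap H]\to 0$ yields exactly
\[
\dim I_{\le d}(X)-\dim I_{\le d-1}(X)=\dim\bigl(\text{image of }I_{\le d}(X)\text{ in }\CC[H]\bigr)\le\dim J_{\le d}(X\cap H).
\]
Your conditions ($L$ a non-zero-divisor, $X\cap H$ reduced) give $\bar J=J$ \emph{as ideals}, so every element of $J$ lifts to some element of $I$ --- but not necessarily to one of the same degree. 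Equality in the displayed line requires that the restriction map $I_{\le d}(X)\to J_{\le d}(X\cap H)$ be surjective, which in projective language is the vanishing of a piece of $H^1(\mathcal I_{\bar X}(d-1))$ (equivalently, that $I(\bar X)+(L)$ be saturated). For $X$ a large union of surfaces this is not automatic and can genuinely fail; in any case it is a strictly stronger condition than reducedness of the scheme $X\cap H$. Since the inequality above goes the wrong way, knowing $\dim J_{\le 100D^2}\ge 1$ does not let you conclude $\dim I_{\le 100D^2}\ge 1$, and the argument does not close.

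The paper sidesteps this entirely: degree reduction supplies the global hypersurface up front, and B\'ezout (rather than an exact-sequence dimension count) is what pins down its degree after slicing.
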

\begin{proof}[Proof of Lemma \ref{manyIncidencesOneSurface} using Lemma \ref{allSurfacesSameNumberTwoRichPts}]
We will prove Lemma \ref{manyIncidencesOneSurface} by induction on $n$, for all $n\leq C_1^{-2}A^2$. If $n=1$ then $\mathcal{C}_2(\mathcal{S})$ is empty, so conclusion (B) of Lemma \ref{manyIncidencesOneSurface}  holds automatically and we are done. Now suppose Lemma \ref{manyIncidencesOneSurface} has been proved for all sets of surfaces of size at most $n-1$. Applying Lemma \ref{allSurfacesSameNumberTwoRichPts}, we conclude that either there is an irreducible three dimensional variety $M$ of degree at most $100D^2$ that contains at least $A$ surfaces from $\mathcal{S}$, or there exists a surface $S_0\in\mathcal{S}$ for which there exists fewer than $C_2A$ distinct irreducible curves $\gamma\subset S$ that are incident to at least one other surface from $\mathcal{S}$. If the former happens then conclusion (A) holds and we are done. If the latter happens, let $\mathcal{S}^\prime=\mathcal{S}\backslash \{S_0\}$. Then $|\mathcal{S}^\prime|=n-1$, so we can apply the induction hypothesis. Either conclusion (A) holds, or there does not exist a three dimensional variety of degree at most $100D^2$ that contains at least $A$ surfaces from $\mathcal{S}^\prime$. This implies that
\begin{equation*}
 \sum_{\gamma\in\mathcal{C}_2(\mathcal{S}^\prime)}|\{S\in \mathcal{S}^\prime\colon \gamma\subset S \}|\leq C_2A(n-1).
\end{equation*}
Therefore
\begin{equation}
 \begin{split}
  \sum_{\gamma\in\mathcal{C}_2(\mathcal{S})}|\{S\in \mathcal{S}\colon \gamma\subset S \}|&< C_2A + \sum_{\gamma\in\mathcal{C}_2(\mathcal{S}^\prime)}|\{S\in \mathcal{S}^\prime\colon \gamma\subset S \}|\\
  &<C_2A + C_2A(n-1)\\
  &= C_2An.
 \end{split}
\end{equation}
Thus conclusion (B) holds.

\end{proof}

The proof of Lemma \ref{allSurfacesSameNumberTwoRichPts} uses a principle known as degree reduction. This is the phenomenon that if a set of varieties intersects much more frequently than one would expect, then  this set of varieties has algebraic structure. More precisely, the set of varieties can be contained in the zero-set of a polynomial of lower degree than one might expect using dimension counting arguments. We will use the following degree reduction type result.
\begin{lemma}[Degree reduction]\label{degreeReductionLem}
For each $D\geq 1$, there are constants $C_3,C_4$ so that the following holds. Let $\mathcal{S}$ be a set of $n$ irreducible two dimensional surfaces, each of which has degree at most $D$. Suppose that $A\geq C_3 n^{1/2}$ and that for each surface $S\in\mathcal{S}$, there are at least $C_4A$ irreducible curves $\gamma\subset S$ that are contained in some other surface $S^\prime\subset S$. Then there exists a polynomial $P\in\RR[x_1,\ldots,x_4]$ of degree at most $n/(2A)$ whose zero-set contains every surface in $\mathcal{S}$. 
\end{lemma}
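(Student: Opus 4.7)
The plan is to apply the polynomial method: exhibit a nonzero $P\in\RR[x_1,\ldots,x_4]$ of degree $d:=\lfloor n/(2A)\rfloor$ vanishing on every $S\in\mathcal{S}$. Since $\dim\RR[x_1,\ldots,x_4]_{\leq d}=\binom{d+4}{4}\gtrsim d^4$, the task is to organize the shared-curve structure so that the ``vanishing on $S$'' conditions become highly dependent; a dimension count then produces $P$.

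The first step is a Bezout reduction on each surface. If $P$ of degree $\leq d$ does not vanish identically on $S\in\mathcal{S}$ (degree $\leq D$), then $Z(P)\cap S$ is a $1$-dimensional variety of total degree at most $Dd$, hence contains at most $Dd$ distinct irreducible curves. So if $P$ vanishes on at least $Dd+1$ irreducible curves contained in $S$, then $P|_S\equiv 0$. Choosing $C_3,C_4$ so that $C_3^2 C_4\geq D$ ensures $C_4A\geq Dd+1$, so the hypothesis supplies enough shared curves per surface for this criterion to apply.

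It therefore suffices to construct a set $\Gamma^\star$ of shared irreducible curves (each of degree $\leq D^2$ by Bezout) such that (i) every $S\in\mathcal{S}$ contains at least $Dd+1$ curves from $\Gamma^\star$, and (ii) $|\Gamma^\star|\leq c\,d^3/D^2$ for a small constant $c$. Property (ii) ensures that imposing $P|_\gamma=0$ for all $\gamma\in\Gamma^\star$ amounts to at most $|\Gamma^\star|\cdot(D^2d+O_D(1))$ linear conditions on the $\binom{d+4}{4}$-dimensional space of polynomials of degree $\leq d$, so for appropriate $c$ a nonzero such $P$ exists; property (i) plus the Bezout reduction then forces $P|_S\equiv 0$ for every $S$.

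The main obstacle is constructing $\Gamma^\star$; naively picking $Dd+1$ curves per surface yields a set of size $\sim nDd$, far above the budget $\sim n^3/(A^3D^2)$. The hypothesis, combined with $A\geq C_3 n^{1/2}$, must be used to force the shared curves to be reused across many surfaces simultaneously. I would employ a heavy/light dichotomy: fix a threshold $H=H(C_3,C_4,D)$ (a constant) and call $\gamma$ heavy if $|\{S\in\mathcal{S}:\gamma\subset S\}|\geq H$, light otherwise. In the heavy regime, where each surface still has $\gtrsim C_4A$ heavy shared curves, take $\Gamma^\star$ to consist of heavy curves only, and use the double-count $H|\Gamma^\star|\leq \sum_{\gamma\in\Gamma^\star}k_\gamma\leq \sum_S D^2(n-1)\leq D^2 n^2$ together with $A\geq C_3 n^{1/2}$ to verify (ii). In the light regime, some surface $S_0$ contains many low-multiplicity shared curves; here an induction on $n$ after removing $S_0$ (reducing $A$ by a constant factor and absorbing the loss in the constants) should close the case. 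The guiding heuristic is that the formal ``savings'' $\sum_\gamma(k_\gamma-1)\dim\Gamma(\gamma,\mathcal{O}(d))\gtrsim nC_4AD^2d$ contributed by shared curves already exceeds the total ``cost'' $\sum_S\dim\Gamma(S,\mathcal{O}(d))\lesssim nDd^2$ whenever $d=n/(2A)$ and $A\geq C_3\sqrt n$ with $C_3$ large enough, which strongly suggests the desired $P$ exists; the delicate combinatorics of the heavy/light split is the technical heart of turning this heuristic into a rigorous argument.
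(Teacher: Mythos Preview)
Your B\'ezout reduction is correct, and the heuristic in your last paragraph (total ``savings'' from shared curves versus total ``cost'' of vanishing on surfaces) is exactly the right intuition. But the implementation via a universal curve set $\Gamma^\star$ with properties (i) and (ii) has a genuine gap, and in fact cannot be repaired.

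First, the arithmetic in the heavy case is backwards. With $H$ constant your double count gives $|\Gamma^\star|\lesssim D^2n^2/H\sim n^2$, while (ii) demands $|\Gamma^\star|\lesssim d^3/D^2=n^3/(8A^3D^2)$. For $A\ge C_3 n^{1/2}$ this upper bound is at most $O_{C_3,D}(n^{3/2})$, and it only gets \emph{smaller} as $A$ grows; the condition $A\ge C_3 n^{1/2}$ hurts rather than helps. To make the bound work you would need $H\gtrsim A^3/n\ge C_3^2 A$, not a constant.

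Second, and more fundamentally, no $\Gamma^\star$ satisfying both (i) and (ii) need exist. Take $n$ generic degree-$D$ surfaces in $\RR^4$: every pair meets in an irreducible curve of degree $\le D^2$, all distinct, so every shared curve has multiplicity exactly $2$, and each surface has $n-1$ shared curves. The hypotheses hold for any $A$ with $C_3 n^{1/2}\le A\le (n-1)/C_4$. Condition (i) forces $2|\Gamma^\star|\ge n(Dd+1)$, i.e.\ $|\Gamma^\star|\gtrsim nDd$; condition (ii) demands $|\Gamma^\star|\lesssim d^3$. The ratio $nDd/d^3=nD/d^2=4DA^2/n$ is $\gg 1$ whenever $A\gg n^{1/2}$, so (i) and (ii) are incompatible. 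Your light-case induction does not escape this: removing one surface gives degree $\le (n-1)/(2A)$, and the leftover budget $1/(2A)<1$ cannot absorb $S_0$; if instead you shrink $A$, the light case may recur arbitrarily many times.

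The argument the paper invokes (Proposition~12.4 of \cite{GZ}) avoids this by sampling \emph{surfaces}, not curves. One selects a random $\mathcal{S}'\subset\mathcal{S}$ of size $O_D(d^2)$; since vanishing on a single degree-$D$ surface imposes $O_D(d^2)$ linear conditions, parameter counting yields a nonzero $P$ of degree $\le d$ with $P|_{S'}\equiv 0$ for all $S'\in\mathcal{S}'$. Then for each remaining $S$, one shows (by a Chernoff bound and union bound, using that $S$ has $\ge C_4A/D^2$ distinct neighboring surfaces) that with positive probability $S$ shares more than $Dd$ curves with members of $\mathcal{S}'$; your B\'ezout step now forces $P|_S\equiv 0$. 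The crucial difference from your scheme is that the witnessing curves for each $S$ are those shared with the sampled surfaces and \emph{vary with $S$}; they are never collected into a single small set $\Gamma^\star$.
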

See Proposition 12.4 from \cite{GZ} for details. Proposition 12.4 deals with irreducible curves and points (rather than surfaces and curves), but the argument is identical.

\begin{proof}[Proof of Lemma \ref{allSurfacesSameNumberTwoRichPts}]
Use Lemma \ref{degreeReductionLem} to find a polynomial $P$ of degree at most $n/(2A)$ whose zero-set contains every surface $S\in\mathcal{S}$. Write $P$ as a product of irreducible factors $P=P_1\ldots P_\ell$, and for each index $j$, define 
\begin{equation*}
 \mathcal{S}_j=\{S\in\mathcal{S}\colon S\subset Z(P_j)\}.
\end{equation*}
Note that for each index $j$ and each $S\in\mathcal{S}_j$, we have
\begin{equation*}
|\{\gamma\in\mathcal{C}_2(\mathcal{S})\colon \gamma\subset S,\ \gamma\subset S^\prime\ \textrm{for some}\ S^\prime\in \mathcal{S}_{j^\prime},\ j^\prime\neq j\}|\lesssim_D \deg P,
\end{equation*}
since each curve in the above set is an irreducible component of $S\cap Z(P/P_j)$. By Theorem \ref{BBThm}, the number of irreducible components is $\lesssim_D \deg P$. Thus if we select $C_1$ (from the statement of Lemma \ref{allSurfacesSameNumberTwoRichPts}) sufficiently large (depending only on $D$), then 
\begin{equation*}
 |\{\gamma\in\mathcal{C}_2(\mathcal{S})\colon \gamma\subset S,\ \gamma\subset S^\prime\ \textrm{for some}\ S^\prime\in \mathcal{S}_{j^\prime},\ j^\prime\neq j\}|\leq C_2A/2,
\end{equation*}
and thus for each index $j$ and each $S\in\mathcal{S}_j$, there are at least $C_2A/2$ irreducible curves $\gamma\subset S$ that are contained in at least one other surface $S^\prime\in\mathcal{S}_j$.

Recall that $\sum_j |\mathcal{S}_j|\geq n$ and $\sum_j \deg P_j \leq n/(2A)$. Let $\mathcal{J}=\{j=1,\ldots,\ell\colon |\mathcal{S}_j|\geq A\}$.  To finish the proof, we need to find some $j_0 \in \mathcal{J}$ so that the degree of $P_{j_0}$ is at most $100 D^2$.  

Since $\ell\leq\deg P = n/(2A)$, we have $\sum_{j\in\mathcal{J}}|\SSS_j| \geq n-A(n/(2A))\geq n/2$. By pigeonholing, there exists an index $j_0 \in \mathcal{J}$ so that
\begin{equation}
\begin{split}
|\mathcal{S}_{j_0}|&\geq \frac{1}{2}\frac{n}{\deg P}{\deg P_{j_0}}\\
&\geq \frac{1}{2}\frac{n}{(\deg P)^2}(\deg P_{j_0})^2\\
&\geq \frac{1}{2}\frac{n}{(n/(2A))^2}(\deg P_{j_0})^2\\
&\geq\frac{A^2}{n}(\deg P_{j_0})^2.
\end{split}
\end{equation}

Consider the complex variety $Z_{\CC}(P_{j_0})\subset\CC^4$; this variety contains the complexification of each surface $S\in\mathcal{S}_{j_0}$. Let $H\subset\CC^4$ be a hyperplane that is generic with respect to $P_{j_0}$, and $\mathcal{S}_{j_0}$. Then $H\cap Z_{\CC}(P_{j_0})$ is an irreducible two dimensional variety in $H$.  The complexification $S^*$ of each surface $S\in\mathcal{S}_{j_0}$ meets $H$ in an irreducible curve.  Let $\mathcal{L}_{j_0}$ be the set of all these irreducible curves $S^* \cap H$, where $S \in \mathcal{S}_{j_0}$.  For each curve $S^* \cap H \in \mathcal{L}$, there are at least $C_2 A/2$ points in $S^*\cap H$ that are contained in $(S')^* \cap H$ for at least one other surface $S^\prime\in\mathcal{S}_{j_0}$.  (In this step, we used crucially that we are working over $\CC$!  If $S^*$ and $(S')^*$ intersect in a curve $\gamma \subset\CC^4$, then for a generic $H$, $\gamma \cap H$ will be non-empty.)  

This is the setup to apply Proposition \ref{GZProp}.  If $C_1', C_2'$ are the constants in Proposition \ref{GZProp}, then we choose $C_1 = C_1'$ and $C_2 = 2 C_2'$.  Each curve of $\mathcal{L}$ contains at least $C_2' A$ two-rich points of $\mathcal{L}$.  Also, the number of curves in $\mathcal{L}$ is $| \SSS_{j_0} | \le n$, and so $A > C_1' |\mathcal{L}|^{1/2}$.  
By Proposition \ref{GZProp}, there is an irreducible two-dimensional surface $Z \subset H$ of degree at most $100 D^2$ that contains $S^* \cap H$ for at least $A$ different surfaces $S \in \SSS_{j_0}$.  

We claim that $Z$ is actually equal to $Z_{\CC}(P_{j_0}) \cap H$.  We know that $Z \cap Z_{\CC}(P_{j_0}) \cap H$ contains at least $A$ different irreducible curves.  But $\deg Z \le 100 D^2$ and $\deg P_{j_0} \le n/A \le C_1^{-1} n^{1/2}$, and so $(\deg Z) (\deg P_{j_0}) < A$.  Since $Z$ is irreducible, B\'ezout's theorem (cf. Theorem 5.7 in \cite{GZ}) implies that $Z = Z_{\CC}(P_{j_0}) \cap H$.  Therefore $\deg (Z_{\CC}(P_{j_0}) \cap H) \le 100 D^2$.  

Since $\deg (Z_{\CC}(P_{j_0}) \cap H) \le 100 D^2$ for a generic hyperplane $H$, $\deg P_{j_0} \le 100 D^2$.  (Recall that the degree of a $k$-dimensional variety in $\CC^n$ is the number of intersection points with a generic $(n-k)$-plane cf. Definition 18.1 in \cite{Harris}.  From this it follows that if $\deg Z(P_{j_0}) \cap H \le 100 D^2$ for a generic hyperplane $H$, then $\deg Z(P_{j_0}) \le 100 D^2$ too.  Finally since $P_{j_0}$ is irreducible (and hence square-free), $\deg P_{j_0} = \deg Z(P_{j_0})$.)
 
\end{proof}

\subsection{Degree bounds for surfaces}

The results from \cite{GZ} also lead to degree bounds for two-dimensional surfaces that contain a set of low-degree curves with many two-rich points.  In particular, we will use the following result.  

\begin{lemma}\label{manyTwoRichImpliesBddDegree}
For each $D,E\geq 1$, there is a constant $C_1$ so that the following holds. Let $S\subset \CC^3$ be an irreducible surface of degree at most $E$. Let $\mathcal{L}$ be a set of of $n$ irreducible curves of degree at most $D$ that are contained in $S$. Suppose $\pts_2(\mathcal{L})\geq C_1n$. Then $S$ has degree at most $100D^2$.
\end{lemma}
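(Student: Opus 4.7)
The plan is to argue by contradiction: assume $\deg S > 100 D^2$ and derive $|\pts_2(\mathcal{L})| < C_1 n$ for a suitably chosen $C_1 = C_1(D, E)$. The key geometric observation is a Bezout-type bound: for any irreducible surface $Z \subset \CC^3$ of degree at most $100 D^2$ with $Z \neq S$, the intersection $Z \cap S$ is a one-dimensional variety of degree at most $100 D^2 \cdot E$, so it contains at most $100 D^2 E$ irreducible components. Consequently at most $100 D^2 E$ curves of $\mathcal{L}$ can lie in any such $Z$, so any low-degree surface containing substantially more curves than this must equal $S$.

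I would then prune $\mathcal{L}$ with threshold $T = C_1/2$: iteratively remove any curve containing fewer than $T$ two-rich points of the current set, producing $\mathcal{L}'$ in which every curve contains at least $T$ two-rich points. Since each removal destroys at most $T$ rich points, $|\pts_2(\mathcal{L}')| \geq C_1 n - nT = C_1 n / 2$. Next I would apply the degree-reduction principle for curves in $\CC^3$ (Proposition 12.4 of \cite{GZ}, the curves-and-points analogue of Lemma \ref{degreeReductionLem}) to $\mathcal{L}'$ with parameter $A$ satisfying $C_4 A \leq T$ and $A \geq C_3 |\mathcal{L}'|^{1/2}$; this yields a polynomial $P$ of degree at most $|\mathcal{L}'|/(2A)$ vanishing on every curve of $\mathcal{L}'$, and a judicious choice of $A$ arranges $\deg P \leq 100 D^2$.

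The conclusion then follows from a dichotomy on $P$. Either $P$ vanishes identically on $S$, in which case, since $S$ is irreducible, $\deg S \leq \deg P \leq 100 D^2$, contradicting our assumption; or $P|_S$ cuts out a proper one-dimensional subvariety of $S$ of degree at most $100 D^2 \cdot E$, so by the key observation $|\mathcal{L}'| \leq 100 D^2 E$. In this latter case, combining $|\pts_2(\mathcal{L}')| \geq C_1 n/2$ with the trivial bound $|\pts_2(\mathcal{L}')| \leq D^2 |\mathcal{L}'|^2/2 \leq O(D^6 E^2)$ forces $n \leq O(D^6 E^2/C_1)$, while the hypothesis $|\pts_2(\mathcal{L})| \geq C_1 n$ together with the trivial upper bound $|\pts_2(\mathcal{L})| \leq D^2 n^2/2$ forces $n \geq 2 C_1 / D^2$. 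Choosing $C_1$ to exceed some fixed polynomial in $D, E$ (roughly $C_1 \gtrsim D^4 E$) makes these ranges incompatible, ruling out the second alternative.

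The main obstacle lies in balancing the parameters $T$ and $A$. Pruning must be mild enough ($T \leq C_1/2$) to preserve a constant fraction of the rich points, while degree reduction requires both $C_4 A \leq T$ (so that every pruned curve is rich enough to meet its hypothesis) and $A \geq C_3 |\mathcal{L}'|^{1/2}$, and the output must satisfy $\deg P \leq 100 D^2$. When $|\mathcal{L}'|$ is small one takes the minimal $A = C_3 |\mathcal{L}'|^{1/2}$ and obtains $\deg P \leq |\mathcal{L}'|^{1/2}/(2 C_3) \leq 100 D^2$ automatically, but when $|\mathcal{L}'|$ is larger one must take $A \sim |\mathcal{L}'|/(200 D^2)$ to keep $\deg P \leq 100 D^2$, which in turn forces $T$ to scale with $|\mathcal{L}'|$. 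Reconciling all these constraints with the pruning budget $nT \leq C_1 n/2$ ultimately requires a case analysis on the size of $|\mathcal{L}'|$, and a sufficiently large polynomial choice of $C_1 = C_1(D, E)$ closes the argument in every case.
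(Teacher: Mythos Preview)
Your argument has a genuine gap in the regime where $|\mathcal{L}'|$ is large. Degree reduction (Proposition~12.4 of \cite{GZ}) requires the parameter $A$ to satisfy $A \geq C_3 |\mathcal{L}'|^{1/2}$, and to force $\deg P \leq 100D^2$ you additionally need $A \geq |\mathcal{L}'|/(200D^2)$; in either case $A$ must grow with $|\mathcal{L}'|$. The hypothesis that every surviving curve has at least $C_4 A$ rich points then forces the pruning threshold $T$ to grow with $|\mathcal{L}'|$ as well. But the pruning budget $nT \leq |\pts_2(\mathcal{L})|$ caps $T$ at $C_1/2$, and $C_1$ is a constant depending only on $D$ and $E$. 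Since $|\mathcal{L}'|$ can be as large as $n$ (nothing in your setup prevents, say, all $n$ curves from surviving the pruning), these constraints are incompatible once $n$ is large. Your closing assertion that ``a sufficiently large polynomial choice of $C_1 = C_1(D,E)$ closes the argument in every case'' is therefore not justified: no $C_1$ independent of $n$ can make $T \geq C_4 |\mathcal{L}'|/(200D^2)$ hold when $|\mathcal{L}'| \sim n$.

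The paper's proof sidesteps this entirely by using a different tool from \cite{GZ}, namely Proposition~10.2 (a doubly-ruled criterion) together with Proposition~3.4. The point is that Proposition~10.2 has thresholds of the form ``at least $CE^2$ curves, each carrying at least $CE$ rich points'', where $C$ depends only on $D$ and $E = \deg S$ and \emph{not} on $n$. A simple averaging shows that once $|\pts_2(\mathcal{L})| \geq C_1 n$, at least $C_1/D^2$ curves carry at least $C_1/2$ rich points each; taking $C_1 > D^2 C E^2$ makes both thresholds of Proposition~10.2 hold regardless of how large $n$ is. Proposition~3.4 then gives $\deg S \leq 100D^2$ directly. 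Your B\'ezout observation about $Z \cap S$ and the degree-reduction machinery are not needed.
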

\begin{proof}
Let $\mathcal{L}^\prime\subset\mathcal{L}$ be the set of curves that intersect $\pts_2(\mathcal{L})$ in at least $C_1/2$ points. By B\'ezout's theorem, any two curves from $\mathcal{L}$ can intersect in at most $D^2$ points, and thus each curve from $\mathcal{L}$ can intersect $\pts_2(\mathcal{L})$ in at most $D^2n$ places. We conclude that $|\mathcal{L}^\prime|\geq C_1/D^2$. 

We now apply Proposition 10.2 from \cite{GZ}. This proposition says that there is a constant $C$ (depending only on $D$) so that if $S$ contains $\geq CE^2$ curves $\gamma$, and on each such curve there are $\geq CE$ points of intersection with a curve of degree $\leq D$ that is contained in $S$ (the curve must be distinct from $\gamma$), then there is a Zariski-open subset $O\subset S$ so that for each $z\in O$, there exist at least two distinct curves of degree $\leq D$ that pass through $z$ and are contained in $S$. If we select $C_1>D^2CE^2$, then the conclusion of Proposition 10.2 applies. By Proposition 3.4 from \cite{GZ}, this implies that the degree of $S$ is at most $100D^2$. 
\end{proof}

\begin{cor}
For each $D,E\geq 1$, there is a constant $C_1$ so that the following holds. Let $S\subset \CC^4$ be an irreducible (two-dimensional) surface of degree at most $E$. Let $\mathcal{L}$ be a set of of $n$ irreducible curves of degree at most $D$ that are contained in $S$. Suppose $\pts_2(\mathcal{L})\geq C_1n$. Then $S$ has degree at most $100D^2$.
\end{cor}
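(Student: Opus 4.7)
The plan is to reduce to Lemma \ref{manyTwoRichImpliesBddDegree} (the $\CC^3$ version) by applying a generic affine linear projection $\pi\colon \CC^4 \to \CC^3$ and showing that the projected configuration $(\pi(S), \pi(\mathcal{L}))$ satisfies the hypotheses of that lemma; the conclusion $\deg\pi(S) \le 100D^2$ will then lift back to $\deg S \le 100 D^2$.

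First, I would choose $\pi$ to satisfy the following dense Zariski-open genericity conditions on the parameter space of affine linear maps $\CC^4 \to \CC^3$:
\begin{enumerate}
\item[(i)] $\pi|_S$ is birational onto its image, so that $\pi(S)$ is an irreducible two-dimensional variety in $\CC^3$ with $\deg \pi(S) = \deg S \le E$;
\item[(ii)] for each $\gamma \in \mathcal{L}$, the image $\pi(\gamma)$ is an irreducible curve of degree at most $D$;
\item[(iii)] distinct curves $\gamma_1, \gamma_2 \in \mathcal{L}$ satisfy $\pi(\gamma_1) \neq \pi(\gamma_2)$;
\item[(iv)] $\pi$ is injective on the finite set $\pts_2(\mathcal{L})$.
\end{enumerate}
Each of these excludes only a proper Zariski-closed subset of the space of projections; although conditions (iii) and (iv) impose $O(n^2)$ such restrictions, they are finite in number, so a common $\pi$ exists.

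Second, for each $p \in \pts_2(\mathcal{L})$ there are two curves $\gamma_1, \gamma_2 \in \mathcal{L}$ through $p$, and by (iii) their images $\pi(\gamma_1) \neq \pi(\gamma_2)$ both pass through $\pi(p)$, so $\pi(p) \in \pts_2(\pi(\mathcal{L}))$. By (iv) the map $p \mapsto \pi(p)$ is injective on $\pts_2(\mathcal{L})$, hence $|\pts_2(\pi(\mathcal{L}))| \ge |\pts_2(\mathcal{L})| \ge C_1 n$. Combined with (i) and (ii), the pair $(\pi(S), \pi(\mathcal{L}))$ satisfies the hypotheses of Lemma \ref{manyTwoRichImpliesBddDegree} with the same $D$ and $E$, provided the constant $C_1$ in the present corollary is chosen at least as large as the constant $C_1(D,E)$ from that lemma. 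The conclusion yields $\deg \pi(S) \le 100 D^2$, and (i) then gives $\deg S \le 100 D^2$.

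The main obstacle is condition (i), i.e., that a generic linear projection of a two-dimensional irreducible subvariety from $\CC^4$ to $\CC^3$ is birational onto its image and preserves degree. This is a classical fact: after passing to the projective closure $\bar{S} \subset \mathbb{P}^4$, a generic linear projection from a point off $\bar{S}$ induces a morphism from $\bar{S}$ to $\mathbb{P}^3$ that is birational onto its image because the target dimension strictly exceeds $\dim \bar{S} = 2$, and degree is then preserved via the multiplicativity $\deg \bar{S} = (\deg\pi|_{\bar{S}})(\deg \pi(\bar{S}))$. Conditions (ii)--(iv) are routine applications of the same genericity principle.
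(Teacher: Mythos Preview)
Your proposal is correct and follows essentially the same approach as the paper: project generically from $\CC^4$ to $\CC^3$, apply Lemma \ref{manyTwoRichImpliesBddDegree} to the image, and use degree preservation under a generic projection to pull the conclusion back. You have simply made explicit the genericity conditions (i)--(iv) that the paper leaves implicit in the phrase ``a generic (with respect to $S$ and $\mathcal{L}$) linear transformation.''
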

\begin{proof}
The corollary follows by applying a generic (with respect to $S$ and $\mathcal{L})$ linear transformation $\CC^4\to\CC^3$. This transformation preserves the degree of $S$ and all of the curves in $\mathcal{L}$. We apply Lemma \ref{manyTwoRichImpliesBddDegree} to the image of $S$, and conclude that the degree of $S$ is at most $100D^2$.  
\end{proof}

\begin{cor}\label{manyTwoRichDecomposition}
For each $D,E\geq 1$, there is a constant $C_1$ so that the following holds. Let $S\subset \RR^4$ be an irreducible (two-dimensional) surface of degree at most $E$. Let $\mathcal{L}$ be a set of of $n$ irreducible curves of degree at most $D$ that are contained in $S$. Suppose $\pts_2(\mathcal{L})\geq C_1n$. Then $S$ has degree at most $100D^2$.
\end{cor}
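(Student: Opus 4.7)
The plan is to derive this statement from the preceding $\CC^4$ corollary by complexification. Let $S^*\subset\CC^4$ be the smallest complex variety containing $S$, so that $\deg S^*=\deg S\leq E$ by Definition \ref{defnOfRealDegree}. For each $\gamma\in\mathcal{L}$, let $\gamma^*\subset\CC^4$ denote its complexification, which is a complex algebraic curve of degree at most $D$ contained in $S^*$. Distinct real irreducible curves have distinct complexifications (otherwise their real parts, both one-dimensional, would coincide inside a single complex curve), so $\mathcal{L}^*:=\{\gamma^*:\gamma\in\mathcal{L}\}$ has cardinality $n$.

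Every real point lying on two distinct curves of $\mathcal{L}$ is also a complex two-rich point of $\mathcal{L}^*$, hence $|\pts_2(\mathcal{L}^*)|\geq|\pts_2(\mathcal{L})|\geq C_1 n$. Provided $S^*$ is complex-irreducible, applying the preceding $\CC^4$ corollary to $S^*$ and $\mathcal{L}^*$ with the same $D$ and $E$ (first decomposing each $\gamma^*$ into its complex irreducible components if necessary---a step that costs only a factor of $D$ in the curve count, absorbed in the choice of $C_1$) immediately yields $\deg S=\deg S^*\leq 100D^2$, as desired.

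The main technical subtlety is the possibility that $S^*$ decomposes into several complex irreducible components $S^*=S_1^*\cup\cdots\cup S_k^*$ with $k\leq E$, even though $S$ is irreducible over $\RR$. To handle this, sort the curves of $\mathcal{L}^*$ among the components according to which $S_i^*$ contains them. Two-rich points contributed by a pair of curves sitting in distinct components $S_i^*,S_j^*$ must lie on the intersection $S_i^*\cap S_j^*$, which has dimension at most one and degree $O_E(1)$; by B\'ezout each curve contained in $S_i^*$ meets $S_j^*$ in $O(DE)$ points, so the total cross-component contribution is $O_{D,E}(n)$, which is dominated by $C_1 n$ once $C_1$ is chosen large enough in terms of $D$ and $E$. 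Pigeonholing over the components then produces a component $S_{i_0}^*$ on which the curves it contains generate at least $\gtrsim_E n$ two-rich points, and the preceding corollary applied to $S_{i_0}^*$ forces $\deg S_{i_0}^*\leq 100D^2$. Since $S$ is real-irreducible, complex conjugation permutes the components of $S^*$, so the components either are Galois-invariant or appear in conjugate pairs; combined with the corollary applied individually to each component supporting the real part of $S$, this transfers the degree bound to all of $S^*$ and completes the proof. The bulk of the work, and the only genuinely non-routine step, is the first one---passage from $\RR$ to $\CC$ via complexification---after which the heavy lifting is done by the preceding $\CC^4$ result.
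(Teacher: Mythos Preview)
Your approach is essentially the paper's: the corollary is stated without proof immediately after the $\CC^4$ version, and the intended argument is exactly complexification followed by an appeal to that result. Your extra paragraph handling the possibility that $S^*$ is complex-reducible is more cautious than the paper (which, as in the proof of Lemma~\ref{allSurfacesSameNumberTwoRichPts}, tacitly treats the complexification of a real irreducible surface as complex irreducible); note however that your final sentence transferring the degree bound from $S_{i_0}^*$ to all of $S^*$ is loose---you only verified the two-rich hypothesis on one component---though under the paper's convention this case does not actually arise.
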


\section{Proof of Theorem \ref{twoRichPtsThm} }\label{mainProofSection}
\subsection{Polynomial partitioning and the induction hypothesis}
We will prove Theorem \ref{twoRichPtsThm} by induction on $n$. The case where $n$ is small (compared to $D$ and $\epsilon$) is trivial, provided we choose the constant $C=O_{D,\epsilon}(1)$ from the statement of Theorem \ref{twoRichPtsThm} to be sufficiently large. Fix a number $E=O_{D,\eps}(1)$. Use Theorem \ref{partitioningForVarietiesThm} to find a polynomial $P$ of degree at most $E$ so that $O_{D}(n E^{-3})$ curves from $\lines$ intersect each cell $\Omega$.  Also recall that by Theorem \ref{BBThm}, there are $O(E^4)$ cells.  Let $\curves_\Omega \subset \curves$ be the set of curves that intersect the cell $\Omega$.  Apply the induction hypothesis to $\curves_\Omega$ for each cell $\Omega$. We obtain sets $\mathcal{M}_{\Omega}$ and $\mathcal{S}_\Omega$ with the following properties.  First, the number of three-dimensional varieties obeys the bound
$$
|\mathcal{M}_{\Omega}|\lesssim_D (nE^{-3})^{1/3-\eps}.
$$ 
For each $M\in \mathcal{M}_{\Omega},$ 
$$|\curves_{M,\Omega}|\gtrsim_D (nE^{-3})^{2/3+\eps}.$$
 
 The number of two-dimensional varieties obeys the bound
$$
|\mathcal{S}_{\Omega}|\lesssim_D (nE^{-3})^{2/3-2\eps}.
$$
For each $S\in \mathcal{S}_{\Omega},$ 
$$
|\curves_{S,\Omega}|\gtrsim_D (nE^{-3})^{1/3+2\eps}.
$$
Finally, we have
\begin{equation}\label{2RichInsideCell}
  \bigg|\pts_2(\curves_{\Omega})\ \backslash\ \Big(\bigcup_{M\in\mathcal{M}_\Omega}\pts_2(\curves_{M,\Omega}) \cup \bigcup_{S\in\mathcal{S}}\pts_2(\curves_{S,\Omega})\Big)\bigg|\lesssim_D (nE^{-3})^{4/3+3\eps}.
 \end{equation}

Since the number of cells $\Omega$ is $O(E^4)$, if we sum \eqref{2RichInsideCell} over all cells, then we get
 \begin{equation}
  \sum_{\Omega}\bigg|\pts_2(\curves_{\Omega})\ \backslash\ \Big(\bigcup_{M\in\mathcal{M}_\Omega}\pts_2(\curves_{M,\Omega}) \cup \bigcup_{S\in\mathcal{S}}\pts_2(\curves_{S,\Omega})\Big)\bigg| 
\lesssim_D E^{- 9 \eps} n^{4/3 + 3 \eps}.   \end{equation}

Now provided that we choose $E=O_{D,\eps}(1)$ sufficiently large, we get
  
   \begin{equation}\label{2RichInsideCellSummed}
  \sum_{\Omega}\bigg|\pts_2(\curves_{\Omega})\ \backslash\ \Big(\bigcup_{M\in\mathcal{M}_\Omega}\pts_2(\curves_{M,\Omega}) \cup \bigcup_{S\in\mathcal{S}}\pts_2(\curves_{S,\Omega})\Big)\bigg|   \leq \frac{1}{100}n^{4/3+3\eps},
 \end{equation}

 We will fix a value of $E$ so that \eqref{2RichInsideCellSummed} holds. Note that $Z(P)$ is a union of $O_{D,\eps}(1)$ irreducible varieties, each of which has degree at most $E=O_{D,\eps}(1)$. We will choose $C^\prime=O_{D,\eps}(1)$ so that $C^\prime\geq E$. 
 
Let $\mathcal{M}_1$ be the union of $\mathcal{M}_{\Omega}$ over all $\Omega$ together with all the irreducible components of $Z(P)$.  Let $\mathcal{S}_1=\bigcup_{\Omega}\mathcal{S}_{\Omega}$.  These will be the first of several intermediate objects we will consider before finally closing the induction. Note that with $E=O_{D,\eps}(1)$ fixed, we have
\begin{equation}
\begin{split}
|\mathcal{M}_1|&\lesssim_{D,\epsilon} n^{1/3-\epsilon},\\
|\mathcal{S}_1|&\lesssim_{D,\epsilon} n^{2/3-2\epsilon},
\end{split}
\end{equation}
and each $M\in\mathcal{M}_1$ has degree at most $C^\prime$.  By Equation \ref{2RichInsideCellSummed}, we also have

\begin{equation} \label{2richbound1} \bigg| P_2(\curves) \setminus \bigcup_{M \in \MMM_1} P_2(\curves_M) \cup \bigcup_{S \in \SSS_1} P_2(\curves_S) \bigg| \le
 \frac{1}{100} n^{4/3 + 3 \eps}. \end{equation} 

The bound on two-rich points in Equation \ref{2richbound1} is strong enough to close the induction, but the sets $\mathcal{M}_1$ and $\mathcal{S}_1$ are too big.  We need to process these sets to find smaller sets of varieties that still do a comparable job of covering two-rich points.  

\subsection{Dealing with three-dimensional surfaces}
For each $M,M^\prime\in \mathcal{M}_1$, let $S_{M,M^\prime}=M\cap M^\prime;$ we have that $S_{M,M^\prime}$ has degree $O_{E,\epsilon}(1)$. Define 
$$
\mathcal{S}_2 =  \{ S_{M,M^\prime}\colon M, M^\prime\in\mathcal{M}_1\}. 
$$
Then
$$
|\mathcal{S}_2|\leq |\mathcal{M}_1|^2 \lesssim_{D,\epsilon} n^{2/3-2\epsilon}.
$$

(We remark that the two-dimensional varieties in $\mathcal{S}_2$ may have degree more than $100 D^2$ and they may be reducible.  These are both problems for closing the induction.  We will deal with these issues in the next subsection, when we process the two-dimensional varieties.)

For each $M\in\mathcal{M}_1$, define 
$$
\lines_M^* = \lines_M \backslash  \bigcup_{S\in\mathcal{S}_2} \lines_S.
$$

We know that $\curves_M \subset \curves_M^* \cup  \bigcup_{S\in\mathcal{S}_2} \lines_S$.   Many of the points of $P_2(\curves_M)$ lie in $P_2(\curves_M^*) \cup  \bigcup_{S\in\mathcal{S}_2} P_2(\lines_S)$, but not all of them do.  Let $P_{2, hybrid}(\curves_M)$ be the set of 2-rich points of $\curves_M$ that are not contained in $P_2(\curves_M^*) \cup  \bigcup_{S\in\mathcal{S}_2} P_2(\lines_S)$.  We will prove the following bound on the number of these points:

\begin{equation}  \label{hybridpointbound}
\bigg| \bigcup_{M \in \mathcal{M}_1} P_{2, hybrid}(\curves_M) \bigg| \lesssim_{D,\epsilon} n^{4/3}. 
\end{equation}

Let $x$ be a point of $P_{2, hybrid} (\curves_M)$.  We know that $x$ lies in two curves $\gamma_1, \gamma_2 \in \curves_M$.  We know that at most one of $\gamma_1, \gamma_2$ lies in $\curves_M^*$.  Without loss of generality, suppose that $\gamma_2 \notin \curves_M^*$.  Therefore, $\gamma_2$ must lie in some variety $M_2 \in \MMM_1$, with $M_2 \not= M$.  But $\gamma_1$ cannot lie in $M_2$, or else $x$ would lie in $P_2( \curves_{\SSS_{M, M_2}})$.  The number of intersection points between $\gamma_1$ and varieties $M_2 \in \MMM_1$ that don't contain $\gamma_1$ is $\lesssim_D | \MMM_1 | \lesssim_{D,\epsilon} n^{1/3 - \eps}$.  Taking the union over all $M \in \mathcal{M}_1$, we get the bound \eqref{hybridpointbound}.  In other words, 

$$ \left| \bigcup_{M \in \mathcal{M}_1} P_2(\curves_M) \setminus \bigcup_{M \in \mathcal{M}_1} P_2(\curves_M^*) \cup \bigcup_{S \in \mathcal{S}_2} P_2(\curves_S) \right| \lesssim_{D,\epsilon} n^{4/3}. $$

Define
$$
\mathcal{M}_2 = \{M\in\mathcal{M}_1 \colon |\lines^*_M|> 2n^{2/3+\epsilon} \}.
$$
By construction, if $M,M^\prime\in\mathcal{M}_2$, then $\lines^*_M \cap \lines^*_{M^\prime}=\emptyset$. Thus 
\begin{equation}\label{sizeOfM2}
|\mathcal{M}_2|\leq \frac{1}{2}n^{1/3-\epsilon}.
\end{equation}

For each three-dimensional variety $M\in\mathcal{M}_1\backslash \mathcal{M}_2$, apply Proposition \ref{generalizedGuth} with parameter $\epsilon/2$ to the set of curves $\curves_M^*$ in the 3-dimensional variety $M$, and let $\mathcal{S}_M$ be the resulting collection of two-dimensional surfaces. Note that $|\mathcal{S}_M|\leq |\curves_M^*|^{1/2-\epsilon/2}\leq 2 n^{1/3+\epsilon/2}$, and each surface $S\in\mathcal{S}_M$ has degree $O_{D,\epsilon}(1)$. Finally,
$$
\bigg|\pts_2(\lines^*_M)\ \backslash\bigcup_{S\in\mathcal{S}_M}\pts_2(\lines_S))\bigg|\lesssim_{D,\epsilon}|\lines^*_M|^{3/2+\epsilon/2}\lesssim_{D,\epsilon}n^{1+3\epsilon}.
$$

Define
$$
\mathcal{S}_3 = \mathcal{S}_1\cup\mathcal{S}_2\ \cup \bigcup_{M\in\mathcal{M}_1\backslash \mathcal{M}_2}\!\! \mathcal{S}_M.
$$

Summing over all $M \in \mathcal{M}_1 \backslash \mathcal{M}_2$, and noting that $| \mathcal{M}_1 | \lesssim_{D,\epsilon} n^{1/3 - \eps}$, we conclude that

\begin{equation} \label{P_2smallM_1bound}
\bigg|\bigcup_{M\in\mathcal{M}_1\backslash \mathcal{M}_2} P_2(\curves_M^*) \setminus \bigcup_{S \in \mathcal{S}_3} P_2(\curves_S) \bigg| \lesssim_{D,\epsilon} n^{1/3 - \eps} \cdot n^{1 + 3 \eps} \lesssim_{D,\epsilon} n^{4/3 + 2 \eps}.
\end{equation}

Combining our equations so far, we see that

\begin{equation}
\bigg| \bigcup_{M \in \mathcal{M}_1} P_2(\curves_M)\  \backslash \bigcup_{M \in \mathcal{M}_2} P_2(\curves_M) \cup \bigcup_{S \in \mathcal{S}_3} P_2(\curves_S) \bigg| \lesssim_{D,\epsilon} n^{4/3 + 2 \eps},
\end{equation}

and so

\begin{equation} \label{2richbound2} \bigg| P_2(\curves) \setminus \bigcup_{M \in \MMM_2} P_2(\curves_M) \cup \bigcup_{S \in \SSS_3} P_2(\curves_S) \bigg| \le
 \frac{1}{100} n^{4/3 + 3 \eps} + C(D, \eps) n^{4/3 + 2 \eps}. \end{equation} 

By choosing $n$ sufficiently large, we can arrange that the size of this set is at most $\frac{2}{100} n^{4/3 + 3 \eps}$.  So we have effectively replaced $\mathcal{M}_1$ by the smaller set of 3-dimensional varieties $\mathcal{M}_2$.  The number of varieties in $\mathcal{M}_2$ is at most $\frac{1}{2}n^{1/3-\epsilon}$.  This is good enough to close the induction, and it even leaves us some room to add more three-dimensional varieties later.  The cost of this operation was to add more two-dimensional surfaces, replacing $\SSS_1$ by $\SSS_3$.  We can also bound the size of $|\mathcal{S}_3|$ by

\begin{equation}\label{sizeOfS2}
|\mathcal{S}_3| \leq |\mathcal{S}_1|+|\mathcal{S}_2|+n^{1/3-2\epsilon}|\mathcal{M}_1|\lesssim_{D,\epsilon}n^{2/3-2\epsilon}.
\end{equation}

Our bound for $|\mathcal{S}_3|$ is still too big to close the induction (but it is not much worse than our bound for $|\mathcal{S}_1|$).  In the next subsection, we process the two-dimensional varieties in $\mathcal{S}_3$.

\subsection{Dealing with two-dimensional surfaces}

In the last subsection, we constructed a set $\mathcal{S}_3$ of two-dimensional varieties.  In this subsection, we process the set $\mathcal{S}_3$ to close the induction.  There are four problems that we have to fix:

\begin{enumerate}
\item Surfaces in $\mathcal{S}_3$ can be reducible.

\item We know that each surface in $\SSS_3$ has degree $O_{D, \eps}(1)$, but the degree can be more than $100 D^2$.

\item A surface $S \in \mathcal{S}_3$ may not contain $n^{1/3 + 2 \eps}$ curves of $\curves$.  

\item $| \mathcal{S}_3 |$ is too big.

\end{enumerate}

We will fix these problems one at a time.  The last problem is the hardest and most important.

Each surface $S \in \mathcal{S}_3$ is a union of irreducible components

$$ S = S_1 \cup ... \cup S_l. $$

We let $\mathcal{S}_4$ be the set of irreducible components of surfaces $S \in \mathcal{S}_3$.  We know that each surface in $\SSS_3$ has degree $O_{D,\epsilon}(1)$, and so $l = O_{D,\epsilon}(1)$, and $| \SSS_4 | \lesssim_{D,\epsilon} | \SSS_3 | \lesssim_{D,\epsilon} n^{2/3 + 2 \eps}$.  For each $S \in \SSS_3$, we want to understand

$$ P_2(\curves_S)\ \backslash\ \bigcup_{i=1}^l P_2(\curves_{S_i}). $$

If a point $x$ lies in this set, then we must have $x \in \gamma, \gamma'$, where $\gamma \subset S_i$, $\gamma' \subset S_{i'}$, and $\gamma$ is not contained in $S_{i}$.  For a given $\gamma \in \curves_S$, the number of such points is $O_{D}(1)$.  Therefore, 

$$ \bigg| P_2(\curves_S)\ \backslash\ \bigcup_{i=1}^l P_2(\curves_{S_i}) \bigg| \lesssim_D | \curves_S | \lesssim_{D,\epsilon} \sum_{i=1}^l |\curves_{S_i} |.  $$

Taking a union over all $S \in \SSS_3$, we see that

$$ \bigg| \bigcup_{S \in \SSS_3} P_2(\curves_S)\ \backslash  \bigcup_{S \in \SSS_4} P_2(\curves_S) \bigg| \lesssim_{D,\epsilon} \sum_{S \in \SSS_4} |\curves_S|. $$

Next we bound this sum.  We know that $|\SSS_4| \lesssim_{D,\epsilon} n^{2/3 - 2 \eps}$.  We decompose $\SSS_4 = \SSS_{4, big} \cup \SSS_{4, small}$, where $S \in \SSS_{4,big}$ if $| \curves_S | > n^{2/3}$.  We apply Lemma \ref{unionsOfSurfacesLem}.  We can assume that $n$ is sufficiently large so that $n^{2/3} \ge C_1 |\SSS_4|$, and so Lemma \ref{unionsOfSurfacesLem} gives

$$ \sum_{S \in \SSS_{4, big}} |\curves_S| \le 2 \bigg| \bigcup_{S \in \SSS_{4,big}} \curves_S \bigg| \le 2 n. $$

On the other hand,

$$ \sum_{S \in \SSS_{4, small}} |\curves_S| \le | \SSS_4| n^{2/3} \lesssim_{D,\epsilon} n^{4/3}. $$

So all together, we have

\begin{equation} \label{sumL_S}
\sum_{S \in \SSS_4} |\curves_S| \lesssim_{D,\epsilon} n^{4/3}. 
\end{equation}

Plugging this bound into our equation above, we get

$$ \bigg| \bigcup_{S \in \SSS_3} P_2(\curves_S)\ \backslash  \bigcup_{S \in \SSS_4} P_2(\curves_S) \bigg| \lesssim_{D,\epsilon} n^{4/3}. $$

Next we deal with the degrees of the surfaces.  We decompose $\SSS_4 = \SSS_{4, high} \cup \SSS_{4, low}$, where $S \in \SSS_{4, low}$ if and only if the degree of $S$ is at most $100 D^2$.  By Corollary \ref{manyTwoRichDecomposition}, we know that for each $S \in \SSS_{4, high}$, $|P_2(\curves_S)| \lesssim_{D,\epsilon} | \curves_S|$.  Therefore, (using Equation \ref{sumL_S}), 

$$ \sum_{S \in \SSS_{4, high}} | P_2(\curves_S) | \lesssim_{D,\epsilon} \sum_{S \in \SSS_4} | \curves_S | \lesssim_{D,\epsilon} n^{4/3}. $$

Next we prune out surfaces containing few curves.  We let $C_2=O(1)$ be a large constant to be chosen later, and we define

$$ \SSS_5 := \{ S \in \SSS_4 \textrm{ so that } | \curves_S | > C_2 n^{1/3 + 2 \eps} \}. $$

We bound the contribution of the surfaces in $\SSS_4 \setminus \SSS_5$:

$$ \sum_{S \in \SSS_4 \setminus \SSS_5} | P_2(\curves_S)| \le \sum_{S \in \SSS_4 \setminus \SSS_5} |\curves_S|^2 \le
| \SSS_4 | \left( C_2 n^{1/3 + 2 \eps} \right)^2 \lesssim_{D,\epsilon} n^{4/3 + 2 \eps}. $$

To summarize, each surface $S \in \SSS_5$ is irreducible, with degree at most $100 D^2$, and contains at least $C_2 n^{1/3 + 2 \eps}$ curves of $\curves$.  Also we have shown that

$$ \bigg| \bigcup_{S \in \SSS_3} P_2(\curves_S)\ \backslash  \bigcup_{S \in \SSS_5} P_2(\curves_S) \bigg| \lesssim_{D,\epsilon} n^{4/3 + 2 \eps}. $$

Now we come to the most difficult and important issue: $\SSS_5$ may contain too many surfaces.  If we somehow knew that the sets $\{ \curves_S \}_{S \in \SSS_5}$ were disjoint, then since each $\curves_S$ has size at least $C_2 n^{1/3 + 2 \eps}$, it would follow that $| \SSS_5 | \le C_2^{-1} n^{2/3 - 2 \eps}$, which would be good enough to close the induction.  Since we can select $C_2=O(1)$ to be large, it would suffice if the sets $\{ \curves_S \}_{S \in \SSS_5}$ were merely ``roughly disjoint.''  But these sets can fail badly to be disjoint.  In particular, this can happen if many surfaces of $\SSS_5$ cluster into a low-degree 3-dimensional variety $M$.  We need to recognize when this is happening.  When it does happen, we add the variety $M$ to $\mathcal{M}$, and we delete the surfaces in $M$ from $\SSS_5$.  We will show that the remaining surfaces are roughly disjoint.  We find the relevant varieties $M$ by using Proposition \ref {GZForSurfaces}.  We apply Proposition \ref{GZForSurfaces} to $\SSS_5$.  The number of surfaces in $\SSS_5$ is $| \SSS_5 | \lesssim_{D,\epsilon} n^{2/3 - 2 \eps}$.  We apply Proposition \ref{GZForSurfaces} to $\SSS_5$ with the value $A$ taken as

$$ A = \left( n^{2/3 - 2 \eps} \right)^{\frac{1}{2} + \frac{\eps}{2}}. $$

\noindent If $n$ is large enough, then we see that $A \ge C_1 |\SSS_5|^{1/2}$, and so
Proposition \ref{GZForSurfaces} applies.  It tells us that there is a set $\MMM_3$ of irreducible 3-dimensional varieties with the following properties:

\begin{itemize}

\item The degree of each $M \in \MMM_3$ is $O_{D,\epsilon}(1)$.  We can choose the degree $C'$ in the statement of the main theorem so that the degree of each $M \in \MMM_3$ is at most $C'$.

\item $|\MMM_3| \lesssim (n^{2/3 - 2 \eps})^{(1/2 - \eps/2)} \lesssim_{D,\epsilon} n^{1/3 - \frac{10}{9} \eps}$.

\item For each $M \in \MMM_3$, we define $\SSS_M := \{ S \in \SSS_5 | S \subset M \}$.  Then for each $M \in \MMM_3$,

$$ | \SSS_M | \geq (n^{2/3 - 2 \eps})^{(1/2 + \eps/2)} \geq n^{1/3 - \frac{3}{4} \eps}. $$

\item Define $\SSS := \SSS_5 \backslash \bigcup_{M \in \MMM_3} \SSS_M$.  


Then 
$$
 \sum_{\gamma\in \curves} |\{S\in \SSS | \gamma\subset S\}| \lesssim_{D,\epsilon} |\SSS_5|^{3/2+\eps/2}\lesssim_{D,\epsilon} n^{1-\eps/4}.
$$
\end{itemize}

We can rephrase this last inequality as an incidence bound.  We let $I(\curves, \SSS)$ denote the set of pairs $(\gamma, S) \in \curves \times \SSS$ with $\gamma \subset S$.  The last inequality can be rewritten as

$$ | I(\curves, \SSS) | \lesssim_{D,\epsilon} n^{1-\eps/4}. $$

We have now defined our final set of surfaces $\SSS$.  Also, we can now define our final set of 3-dimensional varieties $\MMM$ by

$$ \MMM = \MMM_2 \cup \MMM_3. $$

To finish the proof, we have to check that $\SSS$ and $\MMM$ close the induction.  First we consider $\SSS$.  Since $\SSS \subset \SSS_5$, we already know that each surface $S \in \SSS$ is irreducible with degree at most $100 D^2$ and contains at least $C_2 n^{1/3 + 2 \eps}$ curves of $\curves$.  We now bound $| \SSS |$.  To do so, we double count the incidences $I(\curves, \SSS)$.  On the one hand, we know that each $S \in \SSS$ contains at least $C_2 n^{1/3 + 2 \eps}$ curves of $\curves$, and on the other hand we know that $| I(\curves, \SSS)| \lesssim_{D, \eps} n^{1 - \eps/4}$.  If $n$ is big enough, we get:

$$ C_2 n^{1/3 + 2 \eps} | \SSS | \le | I(\curves, \SSS) |  \le n. $$

Choosing $C_2 \ge 10$, we see that

$$ | \SSS | \le n^{2/3 - 2 \eps}. $$

Next we have to check that $\MMM$ obeys the desired properties.  We have to check that $|\MMM| \le n^{1/3 - \eps}$, and we have to check that each $M \in \MMM$ contains at least $n^{2/3 + \eps}$ curves of $\curves$.

$$ | \MMM | \le |\MMM_2 | + | \MMM_3 | \le \frac{1}{100} n^{1/3 - \eps} + C(D, \eps) n^{1/3 - \frac{10}{9} \eps}. $$

\noindent Since we can assume $n$ is sufficiently large, we get $| \MMM | \le n^{1/3 - \eps}$.  

We already know that each $M \in \MMM_2$ contains at least $2 n^{2/3 + \eps}$ curves of $\curves$.  If $M \in \MMM_3$, we know that $M$ contains at least $n^{1/3 - \frac{3}{4} \eps}$ surfaces $S \in \SSS_5$.  Each surface $S \in \SSS_5$ contains at least $C_2 n^{1/3 + 2 \eps}$ curves of $\curves$.  By Corollary \ref{corUnionsOfSurfacesLem},

$$ | \curves_M | \gtrsim_D \min \left( (n^{1/3 + 2 \eps})^2, n^{1/3 + 2 \eps} \cdot n^{1/3 - \frac{3}{4} \eps} \right) \geq n^{2/3 + \frac{5}{4} \eps}. $$

\noindent Since we can assume $n$ is sufficiently large, we get $| \curves_M | \ge n^{2/3 + \eps}$.  

Finally, combining all of our estimates about two-rich points in the two subsections, we see that

$$ \bigg| P_2(\curves) \setminus \bigcup_{M \in \MMM} P_2(\curves_M) \cup \bigcup_{S \in \SSS} P_2(\curves_S) \bigg| \le
 \frac{1}{100} n^{4/3 + 3 \eps} + C(D, \eps) n^{4/3 + 2 \eps}. $$
 
Since we can assume $n$ is sufficiently large, this closes the induction and finishes the proof of Theorem \ref{twoRichPtsThm}.

\section{Polynomial partitioning over $\CC$: complex contemplations and real realities}
In this section we will discuss several half proofs and non-results. These are proof ideas that appear promising but turn out to be fatally flawed.
\subsection{The dream: polynomial partitioning over $\CC$}

Let $P\in\CC[z_1,\ldots,z_d]$ be a complex polynomial. Define $\operatorname{Re}P\colon \RR^{2d}\to \RR$ by $\operatorname{Re}P(x_1,y_1,\ldots,x_d,y_d)=\operatorname{Re}(P(x_1+iy_1,\ldots,x_d+iy_d))$. Define $\operatorname{Im}P$ similarly. In particular, $Z(\operatorname{Re}P)$ and $Z(\operatorname{Im}P)$ are real hypersurfaces in $\RR^{2d}$ of degree $\deg(P).$ Let $\iota\colon\CC^d\to\RR^{2d}$ be the usual identification of $\CC$ with $\RR^2$. 
\begin{conj}\label{complexPartitioning}
Let $\pts\subset\CC^d$ be a set of $n$ points. Then for each $E\geq 1$, there is a polynomial $P\in\CC[z_1,\ldots,z_d]$ of degree at most $E$ so that each connected component of $Z(\operatorname{Re}(P))\subset\RR^{2d}$ contains $O(n d^{-E})$ points from $\iota(\pts)$. Similarly, each connected component of $Z(\operatorname{Im}(P))\subset\RR^{2d}$ contains $O(n d^{-E})$ points from $\iota(\pts)$
\end{conj}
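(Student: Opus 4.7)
The natural plan is to imitate the proof of the real polynomial partitioning theorem (Theorem \ref{polypart1})---iterated polynomial ham--sandwich---but working inside the restricted real vector space $\mathcal{F}_E := \{\operatorname{Re} P : P \in \CC[z_1,\ldots,z_d],\ \deg P \leq E\}$ of real polynomials on $\RR^{2d}$. First I would verify that $\mathcal{F}_E$ has real dimension $2\binom{E+d}{d} - 1$, with basis $\{\operatorname{Re}(z^\alpha) : |\alpha|\le E\} \cup \{\operatorname{Im}(z^\alpha) : 1\le|\alpha|\le E\}$; since $P \mapsto iP$ converts $\operatorname{Re} P$ into $-\operatorname{Im} P$ and vice versa, the $\operatorname{Im} P$ half of the conjecture is in fact equivalent to the $\operatorname{Re} P$ half. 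A Borsuk--Ulam argument applied to the evaluation map $\RR^{2d} \to \mathcal{F}_E^*$ then yields a complex analogue of polynomial ham--sandwich: any $2\binom{E+d}{d} - 2$ finite subsets of $\iota(\pts)$ can be simultaneously bisected by some hypersurface $\{Q=0\}$ with $Q \in \mathcal{F}_E$.

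Having established this bisection lemma, the natural continuation is to iterate in dyadic layers, as in \cite{GK}: at layer $j$ one picks $P_j \in \CC[z_1,\ldots,z_d]$ of degree $E_j$ with $\binom{E_j+d}{d}$ comparable to $2^{j-1}$ so that $\operatorname{Re} P_j$ bisects the $2^{j-1}$ current pieces. After $k$ layers one would obtain $2^k$ pieces, each containing at most $n/2^k$ points of $\iota(\pts)$, and one would hope to combine the $P_j$ into a single complex polynomial $P$ of degree $\sum_j E_j$ whose hypersurface $Z(\operatorname{Re} P) \subset \RR^{2d}$ records all the stagewise bisections.

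The main and, I expect, fatal obstacle is precisely this combination step. For real polynomials the identity $Z(Q_1 Q_2) = Z(Q_1) \cup Z(Q_2)$ makes the product $\prod_j Q_j$ the correct partitioning polynomial, but for complex polynomials one has $\operatorname{Re}(P_1 P_2) = \operatorname{Re} P_1 \cdot \operatorname{Re} P_2 - \operatorname{Im} P_1 \cdot \operatorname{Im} P_2$, so $Z(\operatorname{Re}(P_1 P_2))$ has no clean relation to $Z(\operatorname{Re} P_1) \cup Z(\operatorname{Re} P_2)$ and the iterated dyadic refinement cannot be recovered from $\operatorname{Re} P$ alone. The natural patches collapse: replacing $\operatorname{Re} P$ by $|P|^2 = P\overline{P}$ forces the vanishing locus into real codimension two in $\RR^{2d}$, making its complement connected; multiplying by $\operatorname{Im} P$ moves the polynomial out of $\CC[z_1,\ldots,z_d]$. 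Proving Conjecture \ref{complexPartitioning} would therefore seem to require either a one-shot construction of $P$ that encodes many dyadic bisections inside a single $\mathcal{F}_E$, or a substitute for the product identity that is compatible with the restriction $P \in \CC[z_1,\ldots,z_d]$; and even granting such a $P$, the claimed count $O(n d^{-E})$ would need to be reconciled with the Milnor--Thom bound of Theorem \ref{BBThm}, which yields only $O(E^{2d})$ components of $\RR^{2d}\setminus Z(\operatorname{Re} P)$ in general and hence points toward a weaker admissible exponent in the absence of some new input from the complex structure.
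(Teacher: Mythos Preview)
The statement you are attempting to prove is labeled a \emph{conjecture}, and the paper does not prove it; on the contrary, the very next subsection (Section~\ref{conjIsFalse}) argues that it is false. So there is no ``paper's own proof'' to compare against, only a disproof. Your proposal is honest about the difficulties and correctly isolates the fatal obstruction in the iterated ham--sandwich approach: the identity $Z(Q_1Q_2)=Z(Q_1)\cup Z(Q_2)$ has no analogue for $\operatorname{Re}(P_1P_2)$, so the dyadic refinement cannot be packaged into a single holomorphic $P$. That diagnosis is right.

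Where your analysis falls short of the paper's is in the final paragraph. You invoke Milnor--Thom to say $\RR^{2d}\setminus Z(\operatorname{Re}P)$ has at most $O(E^{2d})$ components, and suggest this merely ``points toward a weaker admissible exponent.'' The paper's Lemma gives a far more devastating bound: because the restriction of $\operatorname{Re}P$ to any complex line $L\subset\CC^d$ is harmonic on $\iota(L)\cong\RR^2$, every component of $\iota(L)\setminus Z(\operatorname{Re}P)$ is unbounded, and one can slide any point out to a fixed circle $L_0\cap S_R$ cut into exactly $2E$ arcs. Hence $\RR^{2d}\setminus Z(\operatorname{Re}P)$ has at most $2E$ connected components in total, so by pigeonhole some component must contain at least $|\pts|/(2E)$ points---making the conjectured bound $O(nd^{-E})$ impossible. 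Your intuition that ``some new input from the complex structure'' might rescue the exponent is exactly backwards: the complex structure is the obstruction, not a potential remedy.
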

Conjecture \ref{complexPartitioning} appears plausible at first, because the dimension of the vector space of degree $E$ polynomials in $\CC[z_1,\ldots,z_d]$ is $\binom{E+d}{d}$. In particular, given $\binom{E+d}{d}$ sets of points $\pts_1,\ldots,\pts_{\binom{E+d}{d}}$ in $\CC^d$, it is possible to find a complex polynomial $P$ so that $\{ \operatorname{Re}(P)>0\}$ and  $\{ \operatorname{Re}(P)<0\}$ contain an equal number of points from each of the sets $\pts_1,\ldots,\pts_{\binom{E+d}{d}}$. As we will discuss in Section \ref{conjIsFalse} below, however, it appears that Conjecture \ref{complexPartitioning} is likely false. For the moment though, we will suspend disbelief and see what the implications of Conjecture \ref{complexPartitioning} would be.
\subsection{An overly optimistic proof of Szemer\'edi-Trotter in the complex plane}\label{optimisticComplexST}
If Conjecture \ref{complexPartitioning} were true, it would allow for an elementary proof of the complex Szemer\'edi-Trotter theorem. The key observation is that if $Q\in\CC[z]$ is a complex polynomial of degree $E$, then $\operatorname{Re}(Q)\colon\RR^2\to\RR$ is harmonic, and thus $Z(\operatorname{Re}(Q))$ contains $O(E)$ connected components. Similarly $Z(\operatorname{Im}(Q))$ contains $O(E)$ connected components. This means that if $P\in\CC[z_1,z_2]$ is a degree $E$ polynomial, and if $L\subset\CC^2$ is a complex line, then $\iota(L)$ intersects $O(E)$ connected components of $Z(\operatorname{Re}(P))$, and $\iota(L)$ intersects $O(E)$ connected components of $Z(\operatorname{Im}(P))$. 

Let $\pts\subset\CC^2$ be a set of $m$ points, and let $\mathcal{L}$ be a set of $n$ complex lines; assume that $n^{1/2}\leq m\leq n^2$. Use Conjecture \ref{complexPartitioning} to find a polynomial $P\in\CC[z_1,z_2]$ of degree $E=m^{2/3}n^{-1/3}$ so that each connected component of $Z(\operatorname{Re}(P))$ contains $O(mE^{-2}=O(m^{-1/3}n^{2/3})$ points from $\iota(\pts)$, and similarly each connected component of $Z(\operatorname{Im}(P))$ contains $O(m^{-1/3}n^{2/3})$ points from $\iota(\pts)$.

Let $m_{\Omega}$ be the number of points from $\iota(\pts)$ and let $n_{\Omega}$ be the number sets from $\{\iota(L)\colon L\in\mathcal{L}\}$ that meet the cell $\Omega$. We have
\begin{equation*}
\begin{split}
I(\iota(\pts)\backslash Z(\operatorname{Re}(P)), \iota(\mathcal L))&=\sum_{\Omega}m_{\Omega}n_{\Omega}^{1/2}+\sum_{\Omega}n_{\Omega}\\
&\leq C\Big(\sum_{\Omega}m_{\Omega}^2\Big)^{1/2}\Big(\sum_{\Omega}n_{\Omega}\Big)^{1/2}+C\sum_{\Omega}n_{\Omega}\\
&=O(m^{2/3}n^{2/3}).
\end{split}
\end{equation*}

Similarly,
\begin{equation*}
I(\iota(\pts)\backslash Z(\operatorname{Im}(P)), \iota(\mathcal L)) =O(m^{2/3}n^{2/3}). 
\end{equation*}

This implies that
\begin{equation}\label{incidencesInCell}
I(\pts\backslash Z(P), \mathcal{L}) = O(m^{2/3}n^{2/3}).
\end{equation}

Finally, let $\mathcal{L}_1=\{L\in\mathcal L\colon L\subset Z(P)\}$. We have $|\mathcal{L}_1|=O(m^{2/3}n^{-1/3})$. Thus
\begin{equation}\label{incidencesLinesInVariety}
|I(\pts,\mathcal{L}_1)|=O( m^{1/2}|\mathcal{L}_1|)=O(m^{5/6}n^{-1/3})=O(m^{2/3}n^{1/3}).
\end{equation}
Since each line $L\in\mathcal{L}\backslash\mathcal{L}_1$ meets $Z(P)$ in at most $O(D)=O(m^{2/3}n^{-1/3})$ points, we have
\begin{equation}\label{incidencesLinesNotInVariety}
|I(\pts\cap Z(P),\mathcal{L}\backslash \mathcal{L}_1)|=O( m^{2/3}n^{2/3}).
\end{equation}
The theorem follows from combining \eqref{incidencesInCell}, \eqref{incidencesLinesInVariety}, and \eqref{incidencesLinesNotInVariety}. Note that this proof is much simpler than the two existing proofs due to T\'oth \cite{To} and the second author \cite{Z}.
\subsection{An optimistic bound for two-rich points in higher dimensions}
If Conjecture \ref{complexPartitioning} were true, it would allow us to prove a complex analogue of Theorem \ref{twoRichPtsThm} in higher dimensions. In short, 
\begin{conj}\label{generalDimTwoRichPtBd}
Let $Z\subset\CC^d$ be a bounded-degree irreducible variety of dimension $d^\prime$. Let $\mathcal{L}$ be a set of $n$ low degree curves contained in $Z$. Then for each $j=2,\ldots,d-1$, there exists a set $\mathcal{S}_j$ of low degree $j$ dimensional irreducible varieties, such that for each index $j$, $|\mathcal{S}_j|\leq n^{(d-j)/(d-1)-(d-j)\epsilon}$; each variety $S\in\mathcal{S}_j$ contains at least $n^{(j-1)/(d-1)+(d-j)\epsilon}$ of the curves; and
\begin{equation}
\Big|\pts_2(\mathcal{L})\ \ \backslash\ \ \bigcup_{j=2}^{d-1}\bigcup_{S\in\mathcal{S}_j}\pts_2(S(\mathcal{L}))\Big|\leq n^{d/(d-1)+d\epsilon}.
\end{equation}
\end{conj}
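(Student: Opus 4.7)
The plan is to adapt the scheme used to prove Theorem \ref{twoRichPtsThm} to arbitrary ambient dimension, replacing real polynomial partitioning by the hypothetical complex partitioning of Conjecture \ref{complexPartitioning}, and to run an induction on $n$ whose statement is exactly Conjecture \ref{generalDimTwoRichPtBd}. The case of small $n$ is trivial, so everything happens in the inductive step. The nesting in a bounded-degree ambient variety $Z$ of dimension $d'$ is handled, as in Proposition \ref{generalizedGuth}, by using the projected variant of the partitioning polynomial so that the cut of $Z$ by $Z(P)$ is proper.

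First, I would apply (a curve-analogue of) Conjecture \ref{complexPartitioning} to produce a complex polynomial $P$ of degree $E = O_{D,\epsilon}(1)$ whose real and imaginary parts partition $\RR^{2d}$ into cells, each of which meets at most $O(n E^{-(d-1)})$ of the curves in $\curves$. Apply the inductive hypothesis inside each cell $\Omega$ to obtain, for every $j = 2, \ldots, d-1$, a collection $\mathcal{S}_{j,\Omega}$. Form $\mathcal{S}_j^{(0)} := \bigcup_\Omega \mathcal{S}_{j,\Omega}$, and add the irreducible components of $Z(P) \cap Z$ to $\mathcal{S}_{d-1}^{(0)}$. A summation over cells, completely parallel to the cell-sum calculation in Section \ref{mainProofSection}, shows that the two-rich points of $\curves$ not covered by $\bigcup_j \bigcup_{S \in \mathcal{S}_j^{(0)}} \pts_2(\curves_S)$ number at most $O(n^{d/(d-1)+d\epsilon})$. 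The collections $\mathcal{S}_j^{(0)}$ will, however, each be too large to close the induction, so they must be processed.

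Next, I would process the $\mathcal{S}_j^{(0)}$ by a cascade that mirrors the two maneuvers of Section \ref{mainProofSection}, run in the order $j = d-1, d-2, \ldots, 2$. Concretely, for each $j$: (i) add all pairwise intersections $S \cap S'$ with $S, S' \in \mathcal{S}_j^{(0)}$ to $\mathcal{S}_{j-1}^{(0)}$ in order to absorb the ``hybrid'' two-rich points exactly as in the $\pts_{2,hybrid}$ step; (ii) for each $S \in \mathcal{S}_j^{(0)}$, define $\curves_S^* := \curves_S \setminus \bigcup_{S'' \in \mathcal{S}_{j-1}^{(0)}} \curves_{S''}$, prune the $S$ with $|\curves_S^*|$ below the inductive threshold, and on the remaining $S$ apply the inductive hypothesis (with ambient variety $S$ of dimension $j$) to cover $\curves_S^*$ by a collection of $(j-1)$-dimensional subvarieties, which are then added to $\mathcal{S}_{j-1}^{(0)}$. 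Afterwards, run an upward pass over $j$: decompose each surviving surface into irreducible components, discard components of too-high degree using a higher-dimensional analogue of Corollary \ref{manyTwoRichDecomposition} (which will bound $|\pts_2(\curves_S)|$ by $O(|\curves_S|)$), prune surfaces containing too few curves, and if too many $j$-dimensional surfaces remain, invoke a hypothetical higher-dimensional analogue of Proposition \ref{GZForSurfaces} to cluster them inside a small number of low-degree $(j+1)$-dimensional varieties, which are added back to $\mathcal{S}_{j+1}$. The bookkeeping, with the clustering parameter chosen as $A \sim |\mathcal{S}_j|^{1/2 + \epsilon/2}$, is exactly parallel to the final subsection of Section \ref{mainProofSection}.

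The main obstacle is the higher-dimensional analogue of Proposition \ref{GZForSurfaces}: given many $j$-dimensional irreducible surfaces in $\CC^d$ whose pairs share a common $(j-1)$-dimensional subvariety, show that a positive fraction of them lie inside a common low-degree $(j+1)$-dimensional variety. The argument in Section \ref{bipartiteGZ} handles $(d,j) = (4,2)$ by slicing with a generic complex hyperplane to reduce to two-rich points of curves in $\CC^3$, and then invoking Proposition \ref{GZProp} from \cite{GZ}. In general dimension, the same slicing reduces $(d,j)$ to $(d-1, j-1)$, so a joint induction on $d$ and $j$ is natural; but the base case of this induction is a two-rich points bound for curves in some $\CC^{d-j+1}$ with $d - j + 1 \geq 4$, and precisely such a bound in $\CC^4$ is the open problem flagged at the end of the introduction. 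A secondary but still substantial obstacle is formulating and proving the higher-dimensional degree-reduction lemmas that generalize Lemma \ref{degreeReductionLem} and underlie the clustering step; these rest on intersection-theoretic inputs whose $\CC^d$ analogues are not on hand.
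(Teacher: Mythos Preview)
This statement is a \emph{conjecture}, and the paper does not prove it. Section~6.3 offers only a brief heuristic sketch of why it should follow from Conjecture~\ref{complexPartitioning} (which Section~6.4 then argues is almost certainly false). Your proposal is a faithful and considerably more detailed elaboration of that same sketch: partition using Conjecture~\ref{complexPartitioning}, apply the induction hypothesis in each cell, and then process the resulting collections $\mathcal{S}_j^{(0)}$ via the two replacement maneuvers of Section~\ref{mainProofSection}. In that sense your approach and the paper's coincide.

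The one substantive divergence is in how you and the paper view the clustering step (the higher-dimensional analogue of Proposition~\ref{GZForSurfaces}). The paper's sketch asserts that, over $\CC$, slicing with a generic hyperplane reduces the $j$-surface clustering problem to a lower-dimensional two-rich-point problem for curves, and it implicitly treats the latter as supplied by the induction on ambient dimension --- hence the conclusion that Conjecture~\ref{generalDimTwoRichPtBd} ``would likely be achievable.'' You instead flag the sliced problem as requiring an independent analogue of Proposition~\ref{GZProp} in $\CC^{\,d-j+1}$ with $d-j+1\ge 4$, which is not on record. You are right that in the $d=4$ case the paper proves Proposition~\ref{GZForSurfaces} by invoking Proposition~\ref{GZProp} directly rather than by appealing to the $d'=3$ structure theorem, so whether the inductive hypothesis (Conjecture~\ref{generalDimTwoRichPtBd} in dimension $d'-1$) can genuinely stand in for the needed Proposition~\ref{GZProp}-type input is a detail the paper's two-paragraph sketch does not resolve. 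Either way, since both your proposal and the paper's sketch are conditioned on the likely-false Conjecture~\ref{complexPartitioning}, neither constitutes a proof, and the paper makes no claim to the contrary.
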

If Conjecture \ref{complexPartitioning} were true, then Conjecture \ref{generalDimTwoRichPtBd} could be proved using a similar strategy to the proof of Theorem \ref{twoRichPtsThm}. The proof of Theorem \ref{complexPartitioning} begins by partitioning $\RR^d$ into cells. If Conjecture  \ref{complexPartitioning} were true, we could perform an analogous partition on $\CC^{d^\prime}$ (considered as a subset of $\CC^d$) instead. (In Theorem \ref{twoRichPtsThm} we performed a polynomial partition adapted to the set of curves, but this is merely a technical convenience; see \cite{G} for details on why it suffices to be able to partition a set of points.)

The proof of Theorem \ref{twoRichPtsThm} used two main ingredients. First, it used a bound on the number of two-rich points determined by a collection of curves in a bounded-degree three-dimensional surface in $\RR^4$. This is essentially a lower dimensional version of the original problem, so if we prove the result by induction on the dimension $d$, then we can assume that such lower-dimensional bounds already exist. 

Second, the proof of Theorem \ref{twoRichPtsThm} used a bound on the number of two-rich curves determined by a collection of bounded-degree two-dimensional surfaces in $\RR^4$. In order to prove this second bound, it is temping to try to ``slice'' the surface arrangement with a generic real hyperplane. Then, one would hope, two-dimensional surfaces become one-dimensional curves, and two-rich curves become two-rich points. One could then apply an existing (lower dimensional) bound on the number of two-rich points. Unfortunately, this does not work because it may be impossible to find a hyperplane in $\RR^d$ that intersects each of the two-rich curves. If we work over $\CC$, however, this problem disappears---it is possible to find a hyperplane that meets each two-rich curve in a two-rich point. Thus Conjecture \ref{generalDimTwoRichPtBd} would likely be achievable. As we will see below, however, Conjecture \ref{complexPartitioning} is almost certainly false.
\subsection{Why Conjecture \ref{complexPartitioning} probably isn't true}\label{conjIsFalse}
The following lemma demonstrates why Conjecture \ref{complexPartitioning} is not true as stated. It also suggests that any reasonable re-formulation of Conjecture \ref{complexPartitioning} is also likely doomed to failure.
\begin{lemma}
Let $P\in\CC[z_1,\ldots,z_d]$ be a polynomial of degree $E$. Then $\RR^{2d}\backslash Z(\operatorname{Re}(P))$ has at most $2E$ connected components. In particular,  if $\pts\subset\CC^d$ is a finite set of points, then it is impossible for each connected component of $\RR^{2d}\backslash Z(\operatorname{Re}(P))$ to contain fewer than $|\pts|/(2E)$ points from $\iota(\pts)$. 
\end{lemma}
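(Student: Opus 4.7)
The plan exploits two features: (a) $\operatorname{Re}(P)$ is a harmonic function on $\RR^{2d}\cong\CC^d$ because $P$ is holomorphic; and (b) $P\colon\CC^d\to\CC$ is a polynomial of degree $E$ which admits a Stein-type factorization.

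\emph{Eliminating bounded components.} By the maximum principle for harmonic functions, no component of $\{\operatorname{Re}(P)>0\}$ or $\{\operatorname{Re}(P)<0\}$ can be bounded: if $U$ were a bounded component of $\{\operatorname{Re}(P)>0\}$, then $\operatorname{Re}(P)$ would be harmonic on $\overline U$, vanish on $\partial U$, and yet be strictly positive in the interior, contradicting the maximum principle. So every connected component of $\RR^{2d}\backslash Z(\operatorname{Re}(P))$ is unbounded.

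\emph{Counting unbounded components via Stein factorization.} Write $P=G\circ h$ where $G\in\CC[t]$ has degree $k$ and $h\in\CC[z_1,\ldots,z_d]$ has irreducible generic fibers; since $\deg P = k\cdot\deg h$, we have $k\leq E$. Then
$$P^{-1}(\{\operatorname{Re}>0\})=h^{-1}\bigl(G^{-1}(\{\operatorname{Re}>0\})\bigr).$$
By the one-dimensional case of the lemma applied to $G$—whose proof uses only that on a large circle $|z|=R$ the function $\operatorname{Re}(G(Re^{i\theta}))\approx R^k|c_k|\cos(k\theta+\arg c_k)$ has exactly $2k$ sign changes, combined with the exclusion of bounded components from the first step—the set $G^{-1}(\{\operatorname{Re}>0\})$ has at most $k$ connected components. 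By Zariski's connectedness theorem applied to $h$ (whose fibers are connected in the Euclidean topology), the $h$-preimage of a connected open subset of $\CC$ is connected, so $P^{-1}(\{\operatorname{Re}>0\})$ has at most $k\leq E$ components. The symmetric argument for $P^{-1}(\{\operatorname{Re}<0\})$ yields the desired bound of $2E$ overall.

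\emph{Main obstacle.} The subtle point is applying Zariski's connectedness theorem to the polynomial map $h$, which is not proper. The classical formulation requires properness, so one must either invoke a version tailored to polynomials with irreducible generic fibers (noting that even reducible special fibers remain connected because their components meet along lower-dimensional strata), or give a direct path-connectedness argument: join $x_0,x_1\in h^{-1}(V)$ by first connecting $h(x_0)$ to $h(x_1)$ in the connected base $V$, and then lifting this path using local triviality of $h$ away from its discriminant together with the connectedness of each fiber. If this algebraic route proves troublesome, the alternative approach is to reduce directly to the sphere at infinity: for large $R$, the sign pattern of $\operatorname{Re}(P)$ on $S_R$ matches that of $\operatorname{Re}(P_E)$ on $S^{2d-1}$, and the number of components of $S^{2d-1}\backslash Z(\operatorname{Re}(P_E))$ can be bounded by $2E$ using the Hopf fibration $S^{2d-1}\to\CC P^{d-1}$, since $\operatorname{Re}(P_E(e^{i\theta}v_0))=|P_E(v_0)|\cos(E\theta+\arg P_E(v_0))$ has $2E$ sign changes per fiber.
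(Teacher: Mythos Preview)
Your Stein-factorization route is genuinely different from the paper's and is essentially correct, but there is a real gap in the connectedness step. You claim that for the non-composite inner polynomial $h$, ``even reducible special fibers remain connected because their components meet along lower-dimensional strata.'' This is false. Take $h(z_1,z_2)=z_1^2 z_2+z_1=z_1(z_1 z_2+1)$: the generic fiber is irreducible (it is the graph of $z_2=(c-z_1)/z_1^2$ over $\CC^*$), but the fiber over $0$ is $\{z_1=0\}\cup\{z_1 z_2=-1\}$, two disjoint curves. So neither the bare assertion ``all fibers connected'' nor Zariski's connectedness theorem (which, as you note, needs properness) gives what you want.

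The step is, however, salvageable. Let $V'\subset V$ be $V$ minus the finitely many special values; $V'$ is still open and connected. Over $V'$ every fiber is irreducible, hence connected; since a non-constant polynomial map $\CC^d\to\CC$ is open (restrict to a complex line and use the one-variable open mapping theorem), if $h^{-1}(V')$ split as a disjoint union of nonempty open sets, pushing forward would disconnect $V'$. So $h^{-1}(V')$ is connected. Now $h^{-1}(V)\setminus h^{-1}(V')$ is a finite union of hypersurfaces, hence nowhere dense in the open set $h^{-1}(V)$, so $h^{-1}(V')\subset h^{-1}(V)\subset\overline{h^{-1}(V')}$, and any set sandwiched between a connected set and its closure is connected. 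With this patch your argument goes through and actually yields the sharper bound $2\deg G\le 2E$.

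The paper does not use Stein factorization; its proof is a direct path argument. It fixes a single complex line $L_0$ through the origin, chosen generically for the leading form $P_E$, observes that the large circle $L_0\cap S_R$ is cut into exactly $2E$ arcs by $Z(\operatorname{Re} P)$, and then shows that any point $x$ can be connected to one of these arcs without crossing $Z(\operatorname{Re} P)$: harmonicity of $\operatorname{Re} P$ on the parallel line $L_x$ forces the component of $x$ inside $L_x$ to reach $S_R$, and near $S_R$ the arcs on $L_x$ line up with those on $L_0$. This is in spirit your Hopf-fibration alternative, executed on a single fiber; note that your global version (``$2E$ sign changes on every Hopf fiber implies at most $2E$ components on $S^{2d-1}$'') is not justified as written, since you have not ruled out that different components could miss different fibers. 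Your Stein approach buys a clean reduction to one variable and a structural explanation for the bound; the paper's approach buys a completely elementary, self-contained argument with no algebraic input beyond the leading-term asymptotics.
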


\begin{proof}
Let $f=\operatorname{Re}(P)$. Let $L$ be a complex line in $\CC^d$.  $P$ is holomorphic on $L$, and thus $f$ is harmonic on $\iota(L)$.  This means that every connected component of $\iota(L)\backslash Z(f)$ is unbounded.  Now, pick a complex line $L_0$ through the origin so that the highest order part of $P$ does not vanish on $L_0$.  If $L_0$ is parametrized by $w$, then on $L_0$, $P= c_E w^E + O(|w|^{E-1})$ as $|w|\to\infty$. If $x\in \CC^d$, let $L_x$ be the line passing through $x$ parallel to $L_0$. If $L_x$ is parameterized by $x+w$, then on $L_w$, we have $P= c_E w^E + O_{|x|}(|w|^{E-1})$  as $|w|\to\infty$. In particular, for each $r_1,r_2>0$, there is a number $R>0$ so that if $x\in B_{r_1}$, then on each connected component of $L_x\cap S_R$, there is a point $z\in L_x\cap S_R$ so that $f\neq 0$ for each point of $B(z,r_2)$; here $B_r=\{z\in\CC^d\colon |z|\leq r\}$ is the ball of radius $r$ and $S_R=\{z\in\CC^d\colon |z|=R\}$ is the sphere of radius $R$.

Consider the circle $L_0\cap S_R$. If $R$ is large then $Z(f)$ cuts this circle into $2 E$ pieces. We claim that we can move any point $x$ in $\RR^{2d}$ into one of these $2 E$ pieces without crossing $Z(f)$.

To see that this is true, let $x$ be a point in $B_{r}\backslash Z(f)$ for some $r$. If we select $R$ sufficiently large (depending on $r$), then on each connected component of $L_x\cap S_R$, there is a point $z\in L_x\cap S_R$ so that $f\neq 0$ for each point of $B(z,2r)$. Note that $B(z,2r)$ must intersect $L_0\cap S_R$. Thus $z$ lies on the same connected component as some segment of $L_0\cap S_R\cap Z(f)$. Now, by the discussion above, $x$ lies on the same connected component as one of the segments of $L_x\cap S_R\cap Z(f)$, and this segment is part of the same connected component as one of the segments of $L_0\cap S_R\cap Z(f)$; there are only $2E$ segments of this type. Thus for each $r>0$, the set $B_{r}\backslash Z(f)$ contains at most $2E$ connected components. Since this holds for all $r>0$, we conclude that $\RR^{2d}\backslash Z(f)$ contains at most $2E$ connected components.
\end{proof}
\bibliographystyle{abbrv}
\bibliography{richPointsBiblio}

\end{document}